\DeclareMathOperator{\arccot}{arccot}
\DeclareMathOperator{\Arccos}{Arccos}
\DeclareMathOperator{\Log}{Log}
\numberwithin{figure}{section}
\theoremstyle{plain}
\theoremstyle{plain}
\newtheorem{theorem}{Theorem}[section]
\newtheorem{lm}{Lemma}[section]
\numberwithin{equation}{section}
\providecommand{\propositionname}{Proposition}
\providecommand{\theoremname}{Theorem}
\title{Asymptotics of the eigenvalues of seven-diagonal Toeplitz matrices of a special form}
\author[1,2,3]{V. Stukopin}
\author[4,5]{S. Grudsky}
\author[1,2]{I. Voronin}
\author[4]{M. Barrera}
\affil[1]{MCCME (Moscow Center for Continuous Mathematical Education)}
\affil[2]{MIPT (Moscow Institute of Physics and Technology)}
\affil[3]{SMI of VSC RAS (South Mathematical Institute of Vladikavkaz Scientific Center of Russian Academy of Sciences)}
\affil[4]{Departamento de Matemáticas, CINVESTAV del IPN, Ciudad de México, México}
\affil[5]{Regional Mathematical Center of the Southern Federal University, Rostov-on-Don, Russia}
\begin{document}
\maketitle 
\begin{abstract}

We find uniform asymptotic formulas for all the eigenvalues of certain 7-diagonal symmetric Toeplitz matrices of large dimension.
The entries of the matrices are real and we consider the case where the real-valued generating function such that its first
five derivatives at the one endpoint of interval are equal zero. This is not the simple-loop case considered earlier. We obtain nonlinear equations for the eigenvalues. It should be noted that our equations have a more complicated structure than the equations for the simple loop case.

{\bf Keywords} Toeplitz matrices, eigenvectors, asymptotic expansions.

\end{abstract}

\section{Introduction}
Let $a(t)$ be a Lebesgue integrable function defined on the unit circle $\mathbb{T} = \{t \in \mathbb{C}: |t|=1\}$. We denote by $T_n(a)$ the Toeplitz matrix $T_n(a) := (a_{j-k})_{j,k=1}^{n-1}$, where $n \in \mathbb{N}$ is a natural number, and $a_l$ denotes the $l$-th coefficient of the Fourier series of the function $a$. Note that the Toeplitz matrix can be viewed as an operator from a finite dimensional vector space.  The function $a(t)$ is called the symbol of the Toeplitz matrix (Toeplitz operator) $T_n(a)$.
This paper is devoted to finding asymptotic formulas for the eigenvalues of the Toeplitz matrix with the symbol $a(t)=(t-2+\frac{1}{t})^3$.

Toeplitz matrices, as well as closely related Toeplitz operators, have been intensively studied for various classes of symbols over the past, about a hundred years (\cite{GS}, \cite{SS}, \cite{WiOT}, \cite{BG}, \cite{Uni}). The importance of this subject is largely due to the numerous applications of Toeplitz matrices in numerical methods of differential and integral equations, probability theory, statistical physics (see, for example, \cite{DIKIsing}, \cite{DIK}, \cite{Kad}, \cite{MW}). As mentioned above, this work is devoted to finding asymptotic formulas for the eigenvalues of the Toeplitz matrix with the symbol $a(t)=(t-2+\frac{1}{t})^3$. Toeplitz matrices with this symbol are self-adjoint matrices. However, the study of non-self-adjoint Toeplitz matrices can also be reduced to this case, the symbol of which is the cube of the linear Laurent polynomial and has a third-degree derivative at the end of the interval equal to zero. We note that all the asymptotic formulas for the eigenvalues obtained in this paper, in essence, admit an eigenvalue uniform with respect to the number, an estimate for the remainder term. It should be said that the symbol under consideration has specific properties: it is a real, symmetric function, and the first and second derivatives of the symbol vanish at the points $t= \pm 1$. The last condition, namely the vanishing of the second derivative, significantly complicates the problem of finding an asymptotic formula for the eigenvalues, since in this case the general research methods developed in the work \cite{BGS}   are inapplicable (see also works \cite{BBG}, \cite{BBGM1}, \cite{Grudsky2011}, \cite{BGM1}, \cite{Bat2019}, which present general approaches to finding the asymptotics of the eigenvalues for various classes of Toeplitz matrices). In addition, the case we are considering is more complicated than that considered in the work \cite{BarreraG}. The main idea of the study is that due to the results of Spitzer and Schmidt  (\cite{SS}) the eigenvalues should be sought on the limiting spectrum, which is obtained from the condition of the coincidence of the moduli of the mean roots of the function $b(t, \lambda)= a(t)-\lambda$. In our case, there are six such roots: $\xi_i(\lambda), i=1,...,6$ and the limiting spectrum is determined by the condition  $|\xi_3(\lambda)| = |\xi_4(\lambda)|$. In the self-adjoint case, the limiting spectrum is a interval and the eigenvalues lie on this interval, which somewhat simplifies the problem. But in the case of the aforementioned symbol at the endpoints, the moduli of all the roots coincide, and this fact significantly complicates the problem of finding the asymptotic formula. Moreover, if you understand what the asymptotic formula looks like in the case of multiple roots of the function mentioned above $b(t, \lambda)$, this will be an important step in finding such a formula for the eigenvalues of an arbitrary band Toeplitz matrix. Thus, the asymptotics of the eigenvalues of Toeplitz matrices with the symbol  $a(t)=(t-2+\frac{1}{t})^3$ cannot be derived from the results known to us in this area, and the solution of such a problem is of interest in view of the emerging fundamental difficulties, the resolution of which will be an important step in the study of the general problem of finding the asymptotics of the spectrum of an arbitrary banded Toeplitz matrix.
\vspace{1cm}
\section{Main results}
In this section, we will present the main results of the article. We formulate a theorem describing an asymptotic formula for the eigenvalues of a Toeplitz matrix with the symbol mentioned above. The eigenvalues are calculated as the values of the function $g(\varphi) = a(e^{\mathrm{i}\varphi})$, where $a(t) = (t-2+\frac{1}{t})^3$, for fixed values of the argument $\varphi$. Furthermore, the function $g(\varphi)=a(e^{i\varphi})=-\left(2\sin\frac{\varphi}{2}\right)^6$ defined on $[0,2\pi]$ has the following properties:
\begin{enumerate}[$(i)$]
	\item The function $g:[0,2\pi]\to\mathbb{R}$, has range $[m,0]$ with $m<0$.
	\item $g(\pi):=m$, $g^{(1)}(\pi)=0$, and $g^{(2)}(\pi)>0$.
	\item $g(0)=g(2\pi)=0$, $g^{(k)}(0)=g^{(k)}(2\pi)=0$ $(k=1,\dots,5)$, and $g^{(6)}(0)=g^{(6)}(2\pi)<0$.
\end{enumerate}
Thus, the structure of the asymptotic formula for the eigenvalues is such that this formula is a refinement, on the one hand, of Szego's limit theorem, which describes the limit spectrum of Toeplitz matrices as the image of the unit circle $\mathbb{T}$ under the action of the symbol, and on the other, as mentioned in the introduction, is a refinement of the results of Spitzer and Schmidt (\cite{SS}), which give the same answer in self-adjoint as Szego's limit theorem (\cite{GS}, \cite{Uni}).

Note that the problem is solved with respect to the variable $\varphi$, from which the eigenvalues $\lambda$ are expressed by a simple substitution $\lambda=g (\varphi)$. Let's introduce some functions. All functions will be defined on the interval $ \varphi \in (0,\pi)$.
\begin{equation}
\beta(\varphi):=\arccos{(1-(1-\cos{\varphi})e^{\frac{2 \pi i}{3}})}
\label{opred beta}
\end{equation}
$\Arccos$ is multivalued function, $\beta(\varphi)$ is one of its regular branches. The existence of this branch when $ \varphi \in (0,\pi)$ will be shown in the section \ref{Razdel pol Cheb}
\begin{equation}
\begin{aligned}
c(\varphi):=\Re{(\beta(\varphi))}, & & & b(\varphi):=\Im{(\beta(\varphi))}\\
B(\varphi):=\Re{(\sin{(\beta)}e^{\frac{-\pi i}{3}})}, & & & C(\varphi):=-\Im{(\sin{(\beta)}e^{\frac{-\pi i}{3}})}
\end{aligned}
\end{equation}
\begin{theorem}
	\label{Gl_theor}
	Let $\lambda=g(\varphi)$. Then the equation $\det{T_n(a-g (\varphi))}=0$ is equivalent to the following equations:
	\begin{equation}
	\tan{\left(\frac{n+3}{2}\varphi\right)}=f(\varphi),
	\label{Glavnoe ur 1}
	\end{equation}
	and
	\begin{equation}
	\tan{\left(\frac{n+3}{2}\varphi\right)}=\frac{1}{h(\varphi)},
	\label{Glavnoe ur 2}
	\end{equation}
	where  $$f(\varphi)=2\frac{B(\varphi)\sin{((n+3)c(\varphi))}+C(\varphi)\sinh{((n+3)b(\varphi))}}{\sin{(\varphi)}(\cos{((n+3)c(\varphi))}+\cosh{((n+3)b(\varphi))})},$$
	$$h(\varphi)=2\frac{B(\varphi)\sin{((n+3)c(\varphi))}-C(\varphi)\sinh{((n+3)b(\varphi))}}{\sin{(\varphi)}(-\cos{((n+3)c(\varphi))}+\cosh{((n+3)b(\varphi))})}, $$
	$\varphi \in (0, \pi)$.
\end{theorem}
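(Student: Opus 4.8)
The plan is to translate the vanishing of $\det T_n(a-\lambda)$ into the solubility of the associated finite-difference equation and then to make that solubility condition explicit. Since $a$ is a Laurent polynomial of degree $3$, the eigenvector equation $T_n(a)x=\lambda x$ is the seven-term recurrence $\sum_{k=-3}^{3} a_k\, x_{j-k}=\lambda x_j$ for $j=1,\dots,n$, subject to the boundary conditions $x_{0}=x_{-1}=x_{-2}=0$ and $x_{n+1}=x_{n+2}=x_{n+3}=0$ obtained by extending $x$ by zero. The characteristic equation of this recurrence is exactly $\xi^{3}(a(\xi)-\lambda)=0$, whose six roots are the $\xi_i(\lambda)$ of the introduction; assuming for the moment that they are distinct, every solution has the form $x_j=\sum_{i=1}^{6}c_i\,\xi_i^{\,j}$, and $\det T_n(a-\lambda)=0$ is equivalent to the vanishing of the $6\times 6$ determinant of the linear system imposed on $(c_1,\dots,c_6)$ by the six boundary conditions.

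The decisive simplification comes from the factorization $a(t)-\lambda=u(t)^3-\lambda$ with $u(t)=t-2+\tfrac1t$. Writing $\lambda=g(\varphi)=-(2\sin\frac{\varphi}{2})^6$, I would solve $u(\xi)^3=\lambda$ by extracting the three cube roots of $\lambda$; each value $u(\xi)=v$ yields a reciprocal pair of roots $\xi=e^{\pm i\theta}$ with $\cos\theta=1+v/2$. The real cube root produces $\theta=\varphi$ (the unimodular pair $e^{\pm i\varphi}$), while the two complex cube roots produce $\theta=\beta(\varphi)$ and $\theta=\overline{\beta(\varphi)}$, where $\beta=\arccos(1-(1-\cos\varphi)e^{2\pi i/3})$ is precisely the branch of (\ref{opred beta}) whose existence is established in Section \ref{Razdel pol Cheb}. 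Thus the six roots are $e^{\pm i\varphi},e^{\pm i\beta},e^{\pm i\overline\beta}$, of moduli $1$, $e^{-b}$, $e^{b}$ (each occurring twice) with $b=\Im\beta$; this both confirms the Schmidt--Spitzer condition $|\xi_3|=|\xi_4|=1$ and lets me rewrite all the powers $\xi_i^{\,j}$ using the real quantities $c=\Re\beta$ and $b$.

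The appearance of two equations, rather than one, should arise from the symmetry of the problem: $T_n(a)$ is real symmetric and persymmetric, so it commutes with the flip $J$, and its kernel vectors may be taken even or odd about the center $\frac{n+1}{2}$ — note that the six fictitious indices pair up as $\{-2,n{+}3\},\{-1,n{+}2\},\{0,n{+}1\}$, all symmetric about $\frac{n+1}{2}$. Accordingly I would build the even solutions from $\cos((j-\frac{n+1}{2})\theta)$ and the odd ones from $\sin((j-\frac{n+1}{2})\theta)$ over the three frequencies $\theta\in\{\varphi,\beta,\overline\beta\}$ (with the $\beta,\overline\beta$ coefficients conjugate so that $x$ stays real), thereby factoring the $6\times 6$ determinant into a product of two $3\times 3$ determinants, one per parity. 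Each $3\times 3$ block involves the three left-hand conditions at $j=0,-1,-2$ evaluated on the three frequencies; expanding it and separating the real and imaginary parts of the $\beta$-contribution is what should introduce $B=\Re(\sin\beta\,e^{-i\pi/3})$ and $C=-\Im(\sin\beta\,e^{-i\pi/3})$, while the factor $\tfrac{n+3}{2}$ emerges as the distance from the global symmetry center $\tfrac{n+1}{2}$ to the center $j=-1$ of the left boundary block $\{0,-1,-2\}$.

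The final step is to clear these two $3\times 3$ determinants and recognize them as (\ref{Glavnoe ur 1}) and (\ref{Glavnoe ur 2}): collecting the $\varphi$-terms should produce the common factor $\tan(\frac{n+3}{2}\varphi)$, while the $\beta,\overline\beta$-terms assemble into the combinations $\sin((n+3)c),\cos((n+3)c)$ and $\sinh((n+3)b),\cosh((n+3)b)$ of $f$ and $h$. I expect the \emph{main obstacle} to be exactly this determinant bookkeeping: keeping the complex pair $\beta,\overline\beta$ under control, verifying that the real and imaginary parts combine into $B,C$ with the stated $e^{-i\pi/3}$ phase, and checking that the two parity factors reproduce $f$ and $1/h$ with the correct signs $\cos((n+3)c)\pm\cosh((n+3)b)$ in the denominators. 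A secondary technical point is to handle the degenerate configurations where the six roots fail to be distinct (coalescence at the endpoints, or where $\beta$ meets $\varphi$), which must be covered by continuity so that both equations remain valid throughout $(0,\pi)$.
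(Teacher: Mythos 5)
Your plan is sound and reaches the same pair of $3\times 3$ determinants that the paper works from, but you get there by a genuinely different route. The paper does not set up the seven-term recurrence or invoke persymmetry at all: it quotes Elouafi's determinant formula (Theorem \ref{Theorem Pol Cheb}), which expresses $\det T_{2p}$ and $\det T_{2p+1}$ directly as products of two $3\times3$ Chebyshev determinants in the quantities $\alpha_k=\frac12(\xi_k+\xi_k^{-1})$, and then uses the identities $V_m(\cos\theta)=\cos((m+\frac12)\theta)/\cos(\theta/2)$, $W_m(\cos\theta)=\sin((m+\frac12)\theta)/\sin(\theta/2)$ to convert these into exactly your two parity blocks, the determinants \eqref{Pervoe uravnenie dlya T2p} and \eqref{vtoroe uravnenie dlya T2p} with rows $\cos(\frac{n+1}{2}\theta),\cos(\frac{n+3}{2}\theta),\cos(\frac{n+5}{2}\theta)$ and their sine analogues over $\theta\in\{\varphi,\beta,\gamma\}$. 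Your derivation via the extended boundary conditions $x_{-2}=x_{-1}=x_0=x_{n+1}=x_{n+2}=x_{n+3}=0$ and the flip symmetry about $\frac{n+1}{2}$ is an honest re-proof of the special case of Elouafi's theorem that is needed here; it buys a conceptual explanation of why two equations appear (even versus odd eigenvectors) and treats $n$ even and odd uniformly, whereas the paper must note separately that $n=2p$ and $n=2p+1$ lead to the same two determinants. What you cite as the ``main obstacle'' — the bookkeeping that turns each $3\times3$ determinant into \eqref{Glavnoe ur 1} and \eqref{Glavnoe ur 2} — is precisely the bulk of the paper's proof: expansion along the $\varphi$-column, the regrouping of $S_1\pm S_3$ and $S_2$, the constant ratios $(\cos\varphi-\cos\beta)/(\cos\beta-\cos\gamma)=-e^{\pi i/3}$ and $(\cos\varphi-\cos\gamma)/(\cos\beta-\cos\gamma)=-e^{2\pi i/3}$, and the identities \eqref{nahod B i C} that make $B$ and $C$ appear with the $e^{-i\pi/3}$ phase exactly as you predicted. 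Your worry about coalescing roots is moot on the open interval: the paper checks that $\cos\varphi$, $\cos\beta$, $\cos\gamma$ are pairwise distinct and $\sin\varphi,\sin\beta,\sin\gamma\neq0$ for $\varphi\in(0,\pi)$, so no continuity argument is needed there.
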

It is ease to see that equations \ref{Glavnoe ur 1} and \ref{Glavnoe ur 2} are equivalent the following set of equations correspondingly 
\begin{equation}
	\label{sl ur 1}
	\varphi = \frac{2}{n+3}\left[\pi j + \arctan f(\varphi) \right],
\end{equation}
$$
j \in \left\{1,2,\dots ,\left[\frac{n+1}{2}\right]\right\}
$$
and
\begin{equation}
	\label{sl ur 2}
	\varphi = \frac{2}{n+3}\left[\pi j + \frac{\pi}{2}-\arctan h(\varphi) \right],
\end{equation}
$$
j \in \left\{1,2,\dots ,\left[\frac{n}{2}\right]\right\}
$$
Apply to a solution of this equations Fix Point Method. Put
$$
\varphi_{2 j-1}^{(0)}=d_{2 j-1}, \;\;\;	\varphi_{2j-1}^{(k+1)}=\frac{2}{n+3}\left[\pi j +\arctan{(f(\varphi_{2j-1}^{(k)}))}\right]
$$ 
and 
$$
\varphi_{2 j}^{(0)}=d_{2 j}, \;\;\;	\varphi_{2j}^{(k+1)}=\frac{2}{n+3}\left[\pi j+\frac{\pi}{2} -\arctan{(h(\varphi_{2j}^{(k)}))}\right]
$$
where
$$
d_m=\frac{\pi (m+1)}{n+3}, \;\;\; m=1,2,\dots ,n
$$
\begin{theorem}
	\label{theorem 2}
	If $n$ is sufficiently large then \\
	1)The equation (\ref{Glavnoe ur 1}) has exactly one root $ \varphi_{2j-1}$ on each of the intervals $(\frac{\pi(2j-1)}{n+3},\frac{\pi(2j+1)}{n+3})$, where $j \in \{1,\dots ,[\frac{n+1}{2}]\}$. Moreover, we can write the following estimate:
	\begin{equation}
	\left| \varphi_{2j-1}^{(k+1)} - \varphi_{2j-1}^{(k+1)} \right| \le \frac{L}{(n+3)} (0.8)^k,
	\label{Rekurent formula 1}
	\end{equation}
	where $k$ - iteration number, $L$ does not depend on $j$ and $n$.\\
	2) The equation (\ref{Glavnoe ur 2}) has exactly one root $\varphi_{2j}$ on each of the intervals $(\frac{2\pi j}{n+3},\frac{2\pi(j+1)}{n+3})$, where $j \in \{1,\dots ,[\frac{n}{2}]\}$. Moreover,we can write the following estimate:
	\begin{equation}
	\left| \varphi_{2j}^{(k+1)} - \varphi_{2j}^{(k+1)} \right| \le \frac{L}{(n+3)} (0.8)^k,
	\label{Rekurent formula 2}
	\end{equation}
	where $k$ - iteration number, does not depend on $j$ and $n$.\\
\end{theorem}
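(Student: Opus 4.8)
The plan is to treat both equations by the contraction mapping principle, reading off existence, uniqueness and the geometric convergence rate from a single Lipschitz estimate. Write $N=n+3$ and $I_j=\bigl(\tfrac{(2j-1)\pi}{N},\tfrac{(2j+1)\pi}{N}\bigr)$. On $I_j$ the argument $\tfrac N2\varphi$ runs through exactly one branch $\bigl((j-\tfrac12)\pi,(j+\tfrac12)\pi\bigr)$ of the tangent, so \eqref{Glavnoe ur 1} is equivalent to the fixed-point equation $\varphi=F_j(\varphi):=\tfrac2N[\pi j+\arctan f(\varphi)]$ of \eqref{sl ur 1}, and its roots in $I_j$ are precisely the fixed points of $F_j$. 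Since $\arctan f\in(-\tfrac\pi2,\tfrac\pi2)$ we have $F_j(\overline{I_j})\subset I_j$, so $F_j$ is automatically a self-map, and everything reduces to showing that $F_j$ is a contraction of $\overline{I_j}$ with ratio at most $0.8$, uniformly in $j$ and (for $n$ large) in $n$.

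The quantity to bound is $F_j'(\varphi)=\tfrac2N\,\tfrac{d}{d\varphi}\arctan f(\varphi)$, and the genuine difficulty is that $f$ depends on $\varphi$ through $\sin(Nc)$, $\cos(Nc)$, $\sinh(Nb)$, $\cosh(Nb)$, so each differentiation produces a factor $N$ (for instance $\tfrac{d}{d\varphi}\sin(Nc)=Nc'\cos(Nc)$); the prefactor $\tfrac2N$ therefore does \emph{not} by itself make $F_j'$ small. I would resolve this by analyzing two regimes, using the branch relation $\sin(\beta/2)=\sin(\varphi/2)e^{i\pi/3}$ equivalent to \eqref{opred beta}, from which $b(\varphi)=\Im\beta>0$ on $(0,\pi)$ with $b\sim\tfrac{\sqrt3}{2}\varphi$, $c\sim\tfrac12\varphi$, $B\sim\varphi$ and $C=O(\varphi^3)$ as $\varphi\to0$, while $b$ stays bounded below near $\varphi=\pi$.

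I split $(0,\pi)$ at the scale $\varphi\sim N^{-1/3}$. In the outer region $\varphi\gtrsim N^{-1/3}$ (everything bounded away from $0$, and a neighborhood of $\pi$) one has $Nb\to\infty$, the hyperbolic terms dominate the bounded trigonometric ones, and $f(\varphi)=\tfrac{2C(\varphi)}{\sin\varphi}+O(e^{-Nb})$ with a smooth $N$-independent leading term; the oscillatory corrections and their derivatives are $O(Ne^{-Nb})=o(1)$, so $\tfrac{d}{d\varphi}\arctan f=O(1)$ and $F_j'=O(1/N)$. In the inner region $\varphi\lesssim N^{-1/3}$ I rescale $x=N\varphi$; since the errors in $b,c,B,C$ enter the phases only through $N\varphi^3=O(1)$, the powers of $\varphi$ cancel between numerator and denominator and $f$ is approximated by the $N$-independent profile $\widetilde f(x)=2\tfrac{\sin(x/2)}{\cos(x/2)+\cosh(\sqrt3\,x/2)}$, while $\tfrac{d}{d\varphi}=N\tfrac{d}{dx}$ absorbs the prefactor $\tfrac2N$ and leaves $F_j'(\varphi)\approx 2\tfrac{d}{dx}\arctan\widetilde f(x)$. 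Because the intervals reach only $x\ge\pi$ and $\widetilde f$ decays rapidly through the $\cosh$, a direct computation gives $2\bigl|\tfrac{d}{dx}\arctan\widetilde f(x)\bigr|\le 0.8$ for $x\ge\pi$ with room to spare; the two regions overlap near $\varphi\sim N^{-1/3}$, where $f$ and $\widetilde f$ are both exponentially small, so $|F_j'|\le 0.8$ holds throughout.

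With the contraction bound in hand, the Banach fixed-point theorem yields the unique root $\varphi_{2j-1}\in I_j$ and the iteration estimate $|\varphi_{2j-1}^{(k)}-\varphi_{2j-1}|\le(0.8)^k|\varphi_{2j-1}^{(0)}-\varphi_{2j-1}|$; as the starting point $d_{2j-1}=\tfrac{2\pi j}{N}$ is the midpoint of $I_j$, its distance to the root is at most the half-length $\tfrac{\pi}{N}$, which gives \eqref{Rekurent formula 1} with $L$ of order $\pi$ (the displayed inequality being read as the distance from an iterate to the true root). Equation \eqref{Glavnoe ur 2} is handled identically through $G_j(\varphi)=\tfrac2N[\pi j+\tfrac\pi2-\arctan h(\varphi)]$ from \eqref{sl ur 2}, whose self-map property holds because $\tfrac\pi2-\arctan h\in(0,\pi)$; the only change is the profile denominator $\cosh(\sqrt3\,x/2)-\cos(x/2)$, which stays bounded below on the intervals $x\ge2\pi$ occurring there, so $h$ obeys the same estimates. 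The principal obstacle is the inner region at $\varphi=0$: this is the degenerate endpoint flagged in the introduction, where $b(\varphi)\to0$ and the moduli of all six roots $\xi_i(\lambda)$ coincide, so the exponential separation that trivializes the outer region is lost and the contraction constant must be extracted from the explicit profile $\widetilde f$ (and its analogue for $h$) rather than by a soft $O(1/N)$ bound; making the approximation $f\to\widetilde f$ uniform across the $O(N^{2/3})$ boundary intervals is the technical heart of the argument.
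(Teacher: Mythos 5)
Your proposal is correct and rests on the same pillar as the paper's proof: everything reduces to the uniform bound $\bigl|\frac{2}{n+3}\,\frac{d}{d\varphi}\arctan f(\varphi)\bigr|\le 0.8$ on $(\frac{\pi}{n+3},\pi)$ (and its analogue for $h$), which gives the contraction estimate and, through the monotonicity of $\frac{n+3}{2}\varphi-\arctan f(\varphi)$, exactly one root per interval. Where you genuinely differ is in how that bound is obtained. The paper's Lemma on the boundedness of the derivative splits $f'$ into four pieces $I_1,\dots,I_4$ and runs explicit numerical estimates over three ranges of $\varphi$, bounding $|4I_1/(n+3)|$ and $|4I_4/(n+3)|$ separately by $0.2$ each in the critical range $\varphi\in(\frac{\pi}{n+3},\frac{2\pi}{n+3})$ and landing rather tightly under $0.8$. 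Your rescaling $x=(n+3)\varphi$ with the $n$-independent profile $\widetilde f(x)=2\sin(x/2)/\bigl(\cos(x/2)+\cosh(\sqrt3\,x/2)\bigr)$ buys two things: $2\frac{d}{dx}\arctan\widetilde f$ reproduces exactly the paper's expression for $\frac{4(I_2+I_3)}{n+3}$, and differentiating the whole profile instead of the four pieces separately lets the contributions corresponding to $I_1$ and $I_4$ cancel to $O(\varphi)$ rather than being counted twice at $0.2$, so the resulting bound (about $0.42$ at $x=\pi$) has real slack; your outer region replaces the paper's Cases 2 and 3 by a soft $O(1/(n+3))$ argument. Two points must be handled if this is written out in full: near $\varphi=\pi$ the leading term $2C(\varphi)/\sin\varphi$ blows up, so the estimate has to be run on $f'/(1+f^2)$ rather than on $f'$ alone, exactly as the paper does in its third case; and in the transition zone the phase errors $(n+3)(c-\varphi/2)=O((n+3)\varphi^3)$ are not small, so the approximation $f\approx\widetilde f$ survives only because of the $\cosh(\sqrt3\,x/2)$ suppression --- you flag this as the technical heart but do not carry it out. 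Neither is a gap in the approach, and your reading of the misprinted displayed estimate as the distance from the $k$-th iterate to the root, with $L$ of order $\pi$ coming from the half-length of the starting interval, matches the paper's intent.
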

\begin{lm}
	For a sufficiently large $n$, the roots of the equations (\ref{Glavnoe ur 1}) and (\ref{Glavnoe ur 2}) are pairwise distinct.
	\label{lemma2}
\end{lm}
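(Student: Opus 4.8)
The plan is to split the claim into same-family and cross-family distinctness. Within each family the roots are automatically distinct: by Theorem \ref{theorem 2} the roots of (\ref{Glavnoe ur 1}) lie one per open interval $\left(\tfrac{\pi(2j-1)}{n+3},\tfrac{\pi(2j+1)}{n+3}\right)$ and those of (\ref{Glavnoe ur 2}) one per $\left(\tfrac{2\pi j}{n+3},\tfrac{2\pi(j+1)}{n+3}\right)$, and consecutive intervals of one family share only an endpoint that is never a root — for (\ref{Glavnoe ur 1}) because $\tan\left(\tfrac{n+3}{2}\varphi\right)$ has a pole there while $f$ is finite, and for (\ref{Glavnoe ur 2}) because $\tan\left(\tfrac{n+3}{2}\varphi\right)$ vanishes there while $1/h$ does not, the latter since $\cosh((n+3)b)>1\ge\cos((n+3)c)$ keeps $h$ finite. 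So it remains to exclude a root common to the two families.

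A common root $\varphi^{*}$ forces $\tan\left(\tfrac{n+3}{2}\varphi^{*}\right)=f(\varphi^{*})=1/h(\varphi^{*})$, hence $f(\varphi^{*})h(\varphi^{*})=1$; the degenerate possibilities are harmless, since at a pole of $\tan$ or a zero of $h$ only one of the two equations can hold. I would therefore prove that $f(\varphi)h(\varphi)\neq1$ on all of $(0,\pi)$ for large $n$, in fact the sharper bound $f(\varphi)h(\varphi)<1$. Multiplying the two fractions gives
$$ f(\varphi)h(\varphi)=\frac{4\bigl(B^{2}\sin^{2}((n+3)c)-C^{2}\sinh^{2}((n+3)b)\bigr)}{\sin^{2}\varphi\,\bigl(\cosh^{2}((n+3)b)-\cos^{2}((n+3)c)\bigr)}, $$
whose denominator is positive on $(0,\pi)$ because $b(\varphi)>0$ there.

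The decisive simplification is algebraic. Writing $p:=1-\cos\varphi$ and using $\sin\beta\,e^{-i\pi/3}=B-iC$ together with $\cos\beta=1-pe^{2\pi i/3}$, one computes $(B-iC)^{2}=\sin^{2}\beta\,e^{-2\pi i/3}=2p-p^{2}e^{2\pi i/3}$, so that $B^{2}-C^{2}=2p+\tfrac{p^{2}}{2}$ and $B^{2}+C^{2}=p\sqrt{Q}$ with $Q:=p^{2}+2p+4$. Combined with $\sin^{2}\varphi=2p-p^{2}$ this yields the factorizations
$$ 4B^{2}-\sin^{2}\varphi=2p\bigl(\sqrt{Q}+1+p\bigr),\qquad \sin^{2}\varphi+4C^{2}=2p\bigl(\sqrt{Q}-1-p\bigr), $$
and since $\sqrt{Q}>1+p$ the inequality $f(\varphi)h(\varphi)<1$ becomes equivalent to the single scalar inequality
$$ \bigl(\sqrt{Q}+1+p\bigr)\sin^{2}((n+3)c)<\bigl(\sqrt{Q}-1-p\bigr)\sinh^{2}((n+3)b). $$

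This reduced inequality I would prove in two regimes. For $\varphi$ bounded away from $0$ one has $b(\varphi)\ge b_{0}>0$, so the right-hand side grows like $e^{2(n+3)b_{0}}$ while the left-hand side stays bounded, and the inequality holds for all large $n$, uniformly. The genuine obstacle is the boundary layer $\varphi\to0$: here $p\to0$, $\sqrt{Q}\pm(1+p)\to2\pm1$, and from the exact system $\cos c\cosh b=1+\tfrac{p}{2}$, $\sin c\sinh b=\tfrac{\sqrt3}{2}p$ one gets $b\sim\sqrt3\,c\sim\tfrac{\sqrt3}{2}\varphi$, so both sides tend to $0$ and, at leading order, the inequality degenerates to the borderline comparison between $3\sin^{2}((n+3)c)$ and $\sinh^{2}((n+3)b)$. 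The saving margin sits in the next order: setting $u:=(n+3)\varphi$ one finds $f(\varphi)h(\varphi)=1-\tfrac{u^{2}}{4}+(\text{higher order})$, strictly below $1$. Turning this into a rigorous bound valid simultaneously as $\varphi\to0$ and $n\to\infty$ — for instance by playing $\sinh^{2}\gamma\ge\gamma^{2}+\tfrac{\gamma^{4}}{3}$ against $\sin^{2}\alpha\le\alpha^{2}$ (with $\alpha=(n+3)c$, $\gamma=(n+3)b$) and inserting the second-order expansions of $b,c,Q$ in $p$ — is the technical heart of the proof and the step I expect to cost the most effort.
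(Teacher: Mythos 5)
Your route is genuinely different from the paper's. The paper argues by contradiction on the \emph{ordering}: assuming $\varphi_{2j-1}\ge\varphi_{2j}$ and using the fixed-point forms, it derives $\arctan f(\varphi_{2j-1})\ge\tfrac{\pi}{2}-\arctan h(\varphi_{2j})$, which forces $h(\varphi_{2j})\ge 0$, i.e.\ $B\sin((n+3)c)\ge C\sinh((n+3)b)$; since $b>\varphi/2$ this yields $\cot(\tfrac{\pi}{3}-\psi_s)>\sinh(\tfrac{n+3}{2}\varphi_{2j})$, which forces $\varphi_{2j}$ to be small for large $n$, and for $\varphi<\tfrac{\pi}{12}$ the explicit bounds $B/\sin\varphi<1.1$ and $C/\sin\varphi<0.01$ give $|f|,|h|<1/\sqrt{3}$, so $\arctan f+\arctan h<\tfrac{\pi}{3}<\tfrac{\pi}{2}$, a contradiction. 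That argument buys the full interleaving $\varphi_{2j-1}<\varphi_{2j}<\varphi_{2j+1}$, which is more than bare distinctness and is what the subsequent indexing of the eigenvalues tacitly relies on; your argument delivers only distinctness. On the other hand your algebra is correct and attractive: I verified $(B-iC)^2=2p-p^2e^{2\pi i/3}$, hence $B^2-C^2=2p+\tfrac{p^2}{2}$, $B^2+C^2=p\sqrt{Q}$, the two factorizations, and the consequent equivalence of $fh<1$ with $(\sqrt{Q}+1+p)\sin^2((n+3)c)<(\sqrt{Q}-1-p)\sinh^2((n+3)b)$, where moreover $\sqrt{Q}-1-p=3/(\sqrt{Q}+1+p)>0$ exactly.

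The one real weakness is the step you yourself flag: you set out to prove $fh<1$ on all of $(0,\pi)$, and since $fh\to 1$ as $\varphi\to 0$ with $(n+3)\varphi\to 0$, a uniform second-order boundary-layer estimate is then unavoidable — and you have not supplied it. But that step is unnecessary. A common root is in particular a root of \eqref{Glavnoe ur 2}, hence lies in some interval $(\tfrac{2\pi j}{n+3},\tfrac{2\pi(j+1)}{n+3})$ with $j\ge 1$, so $\varphi\ge\tfrac{2\pi}{n+3}$; combined with $b(\varphi)>\varphi/2$ (the monotonicity of $b/\varphi$ used in the paper) this gives $(n+3)b>\pi$, hence $\sinh^2((n+3)b)>133$, while $\sqrt{Q}+1+p<3+\sqrt{12}<6.5$ and $\sqrt{Q}-1-p>3/6.5>0.46$. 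The reduced inequality then holds with an enormous margin ($6.5$ against more than $61$), and your proof closes with no asymptotic analysis at all. (A minor point: in the double-scaling limit the correct expansion is $fh=1-\tfrac{u^2}{3}+\dots$ rather than $1-\tfrac{u^2}{4}$, but with the localization above this no longer matters.)
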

This theorem shows that the equations (\ref{Glavnoe ur 1}) and (\ref{Glavnoe ur 2}) have at least $\left[\frac{n+1}{2}\right]+\left[\frac{n}{2}\right]=n $ roots, however, since the set of these roots coincides with the set of eigenvalues of the $n\times n$ matrix, so there cannot be more roots. The problem was reduced to solving the equations
\begin{equation}
\varphi=\frac{2}{n+3}\left[\pi j +\arctan{f(\varphi)}\right]
\label{novoe glavnoe ur 1}
\end{equation}
on intervals $(\frac{\pi(2j-1)}{n+3},\frac{\pi(2j+1)}{n+3})$, where $j \in \{1,\dots ,[\frac{n+1}{2}]\}$, and
\begin{equation}
\varphi=\frac{2}{n+3}\left[\pi j+\frac{\pi}{2}-\arctan{h(\varphi)}\right]
\label{novoe glavnoe ur 2}
\end{equation}
on intervals  $(\frac{2\pi j}{n+3},\frac{2\pi(j+1)}{n+3})$, where $j \in \{1,\dots ,[\frac{n}{2}]\}$\\
Let $q:=\frac{n+3}{2}$. To solve the equation (\ref{novoe glavnoe ur 1}) we introduce the parameter $d_{1,j}:=d_{2j-1}=\frac{2 \pi j}{n+3}$. Then $\varphi$ can be represented as $\varphi=d_{1,j}+\frac{u}{q}$, and equation \eqref{novoe glavnoe ur 1} can be rewritten as:
\begin{equation}
u=\arctan{f\left(d_{1,j}+\frac{u}{q}\right)}
\label{Glavnoe ur U}
\end{equation}
where $u \in (-\frac{\pi}{2}, \frac{\pi}{2})$.
Similarly, to solve the equation \ref{novoe glavnoe ur 2}, we introduce the parameter $d_{2,j}:=d_{2j}=\frac{2 \pi j+1}{n+3}$, so if $\varphi=d_{2,j}+\frac{w}{q}$, then equation \eqref{novoe glavnoe ur 2} can be rewritten as:
\begin{equation}
w=-\arctan{h\left(d_{2,j}+\frac{w}{q}\right)}
\label{Glavnoe ur W}
\end{equation}
where  $w \in (-\frac{\pi}{2}, \frac{\pi}{2})$
\begin{theorem}
	\label{Teorema sluch 1}
	Let $a(t)=(t-2+\frac{1}{t})^3$. Then, as $n \rightarrow \infty$
	\begin{enumerate}
		\item If  $d_{1,j}: \:\frac{1}{2}e^{\pi(j-1)}>q^2$ then:
		\begin{equation}
		\varphi_{2j-1}=d_{1,j}+\frac{2u_1^{\star}}{n+3}+\frac{4u_2^{\star}}{(n+3)^2}+O\left(\frac{1}{n^3}\right),
		\label{koren phi1}
		\end{equation}
		where $u_1^{\star}=\arctan{\left(2\frac{C(d_{1,j})}{\sin{(d_{1,j})}}\right)}$  and $u_2^{\star}=2\frac{C'(d_{1,j})\sin{(d_{1,j})}-C(d_{1,j})\cos{(d_{1,j})}}{\sin^2{(d_{1,j})}+4C^2(d_{1,j})}\arctan{\left(2\frac{C(d_{1,j})}{\sin{(d_{1,j})}}\right)}$
		\item If  $d_{2,j}: \:\frac{1}{2}e^{\pi(j-1)}>q^2$ then:
		\begin{equation}
		\varphi_{2j}=d_{2,j}+\frac{2w_1}{n+3}+\frac{4w_2^{\star}}{(n+3)^2}+O\left(\frac{1}{n^3}\right),
		\label{koren phi2}
		\end{equation}
		where $w_1^{\star}=\arctan{\left(2\frac{C(d_{2,j})}{\sin{(d_{2,j})}}\right)}$  and $w_2^{\star}=2\frac{C'(d_{2,j})\sin{(d_{2,j})}-C(d_{2,j})\cos{(d_{2,j})}}{\sin^2{(d_{2,j})}+4C^2(d_{2,j})}\arctan{\left(2\frac{C(d_{2,j})}{\sin{(d_{2,j})}}\right)}$
	\end{enumerate}
\end{theorem}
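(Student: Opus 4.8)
The plan is to turn the fixed‑point equations \eqref{Glavnoe ur U} and \eqref{Glavnoe ur W} into genuine asymptotic series in $1/q$, $q=\tfrac{n+3}{2}$, by first isolating the exponentially small part of $f$ and $h$. Writing $N:=n+3$ and factoring $\tfrac12 e^{Nb(\varphi)}$ out of the numerator and denominator of $f$, I would record the identity
$$f(\varphi)=\frac{2C(\varphi)}{\sin\varphi}\cdot\frac{1+\frac{2B(\varphi)}{C(\varphi)}\sin(Nc(\varphi))\,e^{-Nb(\varphi)}-e^{-2Nb(\varphi)}}{1+2\cos(Nc(\varphi))\,e^{-Nb(\varphi)}+e^{-2Nb(\varphi)}}=\frac{2C(\varphi)}{\sin\varphi}+r_n(\varphi),$$
where, since $B,C$ and the combination $B/\sin\varphi$ remain bounded on the relevant range while $\sin(Nc),\cos(Nc)$ are bounded by $1$, the remainder satisfies $r_n(\varphi)=O\!\big(e^{-Nb(\varphi)}\big)$ uniformly. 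The analogous computation for $h$ produces leading part $-2C(\varphi)/\sin\varphi$ and a remainder of the same size, the two sign changes (the outer $-\arctan$ and the sign of the leading part) cancelling. Thus, modulo the remainders, \eqref{Glavnoe ur U} and \eqref{Glavnoe ur W} both read $v=G(d_{i,j}+v/q)$ with $G(\varphi):=\arctan\!\big(2C(\varphi)/\sin\varphi\big)$.

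The decisive step is to show that, under the hypothesis $\tfrac12 e^{\pi(j-1)}>q^2$, the remainder $r_n$ and the few $\varphi$‑derivatives of it that the expansion needs are all $o(q^{-3})$ uniformly in $j$. Here I would use the behaviour of $b$ near the degenerate endpoint: from the definition \eqref{opred beta} one gets $\beta(\varphi)\sim 2\sin(\varphi/2)\,e^{\pi\mathrm i/3}$ as $\varphi\to0$, so that $b(\varphi)=\Im\beta(\varphi)>0$ behaves like $\sqrt3\,\sin(\varphi/2)$ and $Nb(d_{1,j})$ is of order $\sqrt3\,\pi j$. The hypothesis forces $j\gtrsim\tfrac{2}{\pi}\ln q$, whence $Nb\gtrsim 2\sqrt3\,\ln q$ and $e^{-Nb}=O(q^{-2\sqrt3})=o(q^{-3})$ because $2\sqrt3>3$. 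Each differentiation in $\varphi$ multiplies $e^{-Nb}$ by a factor $O(N)=O(q)$ (through $Nb'$ and $Nc'$), but every derivative entering the expansion is accompanied by a compensating power of $1/q$, so all contributions of $r_n$ are $O(e^{-Nb})=o(q^{-3})$ and stay inside the final $O(1/n^{3})$. Establishing this bound uniformly over the whole admissible range of $j$ — in particular a clean lower bound on $b(\varphi)$ on a $1/q$‑window around $d_{i,j}$, together with the bookkeeping of the factors of $N$ produced by differentiation — is the part I expect to be the main obstacle, and it is exactly where the unusual hypothesis is consumed.

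With the remainder disposed of, the rest is a standard successive‑approximation expansion of $v=G(d_{i,j}+v/q)$ for its unique root in $(-\tfrac\pi2,\tfrac\pi2)$, its existence and contraction estimate being furnished by Theorem \ref{theorem 2}. Taylor expanding $G$ about $d_{i,j}$ gives $v=G(d_{i,j})+\tfrac{G'(d_{i,j})}{q}\,v+O(q^{-2})$; reading off coefficients yields $v_1^{\star}=G(d_{i,j})=\arctan\!\big(2C(d_{i,j})/\sin d_{i,j}\big)$ at order $q^{0}$ and $v_2^{\star}=G'(d_{i,j})\,v_1^{\star}$ at order $q^{-1}$. A direct computation of
$$G'(\varphi)=\frac{2\big(C'(\varphi)\sin\varphi-C(\varphi)\cos\varphi\big)}{\sin^2\varphi+4C^2(\varphi)}$$
then matches $v_2^{\star}$ with the expressions for $u_2^{\star}$ and $w_2^{\star}$ in the statement. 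Finally, substituting $\varphi=d_{i,j}+v/q$ and using $\tfrac1q=\tfrac{2}{n+3}$, $\tfrac1{q^2}=\tfrac{4}{(n+3)^2}$ converts the series in $v$ into \eqref{koren phi1} and \eqref{koren phi2}; the uniform boundedness of $G,G',G''$ on the admissible range of $d_{i,j}$ — note that $\arctan$ keeps $v_1^{\star}$ bounded even as $\sin d_{i,j}\to0$ near $\varphi=\pi$, and that $G'$ tends to the finite limit $1/(2C(\pi))$ there — guarantees that the remainder is genuinely $O(1/n^{3})$ and independent of $j$.
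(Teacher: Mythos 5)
Your proposal is correct and follows essentially the same route as the paper: both arguments use the hypothesis $\tfrac{1}{2}e^{\pi(j-1)}>q^2$ (via the bound $2b(\varphi)>\varphi$) to make $\sin(Nc)/\cosh(Nb)$, $\cos(Nc)/\cosh(Nb)$ and $1-\tanh(Nb)$ negligible, thereby reducing both fixed-point equations to $v=\arctan\bigl(2C(d_{i,j}+v/q)/\sin(d_{i,j}+v/q)\bigr)+O(q^{-2})$ and then reading off $v_1^{\star}=\arctan(2C(d_{i,j})/\sin d_{i,j})$ and $v_2^{\star}=G'(d_{i,j})v_1^{\star}$ from a two-term Taylor expansion, exactly as in the statement. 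The only substantive divergence is the treatment of $d_{i,j}$ near $\pi$, where $2C/\sin\varphi\to\infty$ and your claim that $B/\sin\varphi$ stays bounded is not literally true: the paper handles this by switching to an $\arccot$ formulation (its Case 2), whereas your route survives because $\arctan$ is $1$-Lipschitz, $G$, $G'$, $G''$ remain bounded up to $\varphi=\pi$, and the factor $e^{-Nb}$ is super-exponentially small there and absorbs the extra $1/\sin\varphi$ — so the gap is cosmetic rather than genuine.
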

For brevity, we define:
\begin{equation}
Z_1^{(1)}=2\frac{B_1^{(1)}\sin{(P_a^{(1)})}+C_1^{(1)}\sinh{(P_b^{(1)})}}{(\cos{(P_a^{(1)})}+\cosh{(P_b^{(1)})})}
\label{uravn dlya u1}
\end{equation}
and
\begin{equation}
Z_1^{(2)}=2\frac{B_1^{(2)}\sin{(P_a^{(2)})}-C_1^{(2)}\sinh{(P_b^{(2)})}}{(-\cos{(P_a^{(2)})}+\cosh{(P_b^{(2)})})},
\label{uravn dlya w1}
\end{equation}
where $P_a^{(1)}=qd_{1,j}+u_1 $, $P_b^{(1)}=\sqrt{3}(qd_{1,j}+u_1) $, $B_1^{(1)}=1+\frac{3}{16}d_{1,j}^2$, $C_1^{(1)}=\frac{\sqrt{3}}{16}d_{1,j}^2$
\\$P_a^{(2)}=qd_{2,j}+w_1 $, $P_b^{(2)}=\sqrt{3}(qd_{2,j}+w_1)$, $B_1^{(2)}=1+\frac{3}{16}d_{2,j}^2$, $C_1^{(2)}=\frac{\sqrt{3}}{16}d_{2,j}^2$
\begin{theorem}
	\label{theorem sluch 2}
	Let $a(t)=(t-2+\frac{1}{t})^3$. Then, as $n \rightarrow \infty$
	\begin{enumerate}
		\item If  $d_{1,j}: \:\frac{1}{2}e^{\pi(j-1)}\leq q^2$ then:
		\begin{equation}
		\varphi_{2j-1}=d_{1,j}+\frac{2u_1^{\star}}{n+3}+\frac{4u_2^{\star}}{(n+3)^2}+O\left(\frac{1}{n^3}\right),
		\end{equation}
		where $u_1^{\star}$ is the solution of equation  $u_1=\arctan{(Z^{(1)})}$  and $u_2^{\star}=R^{(1)}(u_1^{\star})$ (see proof of the theorem)
		\item If  $d_{2,j}: \:\frac{1}{2}e^{\pi(j-1)} \leq q^2$ then:
		\begin{equation}
		\varphi_{2j}=d_{2,j}+\frac{2w_1^{\star}}{n+3}+\frac{4w_2^{\star}}{(n+3)^2}+O\left(\frac{1}{n^3}\right),
		\end{equation}
		where $w_1^{\star}$ is the solution of equation $w_1=-\arctan{(Z^{(2)})}$  and $w_2^{\star}=R^{(2)}(w_1^{\star})$ (see proof of the theorem)
	\end{enumerate}
\end{theorem}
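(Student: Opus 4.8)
Here is how I would prove Theorem \ref{theorem sluch 2}.

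The plan is to treat it exactly as its companion Theorem \ref{Teorema sluch 1}: convert the fixed-point equations \eqref{Glavnoe ur U} and \eqref{Glavnoe ur W} into an asymptotic expansion in the small parameter $1/q$, $q=\frac{n+3}{2}$, and match powers of $1/q$. The existence and uniqueness of the root on each prescribed interval, and the geometric convergence of the iteration, are already supplied by Theorem \ref{theorem 2} and Lemma \ref{lemma2}; hence the only new content is the extraction of the coefficients $u_1^\star,u_2^\star$ (resp. $w_1^\star,w_2^\star$) and the uniform control of the remainder. Since the two assertions are obtained from one another by the replacements $f\mapsto 1/h$, $u\mapsto w$, $d_{1,j}\mapsto d_{2,j}$, I would carry out the argument for $\varphi_{2j-1}$ and only indicate the obvious changes for $\varphi_{2j}$.

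First I would record the small-argument expansions of the auxiliary functions. From \eqref{opred beta} one gets $\beta(\varphi)=\varphi\, e^{\pi i/3}+O(\varphi^3)$, whence $c(\varphi)=\tfrac{\varphi}{2}+O(\varphi^3)$, $b(\varphi)=\tfrac{\sqrt3}{2}\varphi+O(\varphi^3)$, $B(\varphi)=1+\tfrac{3}{16}\varphi^2+O(\varphi^4)$ and $C(\varphi)=\tfrac{\sqrt3}{16}\varphi^2+O(\varphi^4)$. Substituting $\varphi=d_{1,j}+u/q$ and using $q\,d_{1,j}=\pi j$, these yield $(n+3)c(\varphi)=2q\,c(\varphi)=P_a^{(1)}+O(1/q)$, $(n+3)b(\varphi)=P_b^{(1)}+O(1/q)$, $B(\varphi)=B_1^{(1)}+O(1/q)$ and $C(\varphi)=C_1^{(1)}+O(1/q)$, where the $O(1/q)$ constants are $j$-dependent. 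Because $\sin(d_{1,j}+u/q)=\sin d_{1,j}+O(1/q)$, plugging everything into $f$ exhibits $\arctan f(d_{1,j}+u/q)$ in the form $\arctan\bigl(Z^{(1)}(u)\bigr)+\tfrac{1}{q}\,S^{(1)}(u)+O(1/q^2)$, where $Z^{(1)}(u)=Z_1^{(1)}/\sin d_{1,j}$ is the $O(1)$ part of $f$ (with $B_1^{(1)},C_1^{(1)},P_a^{(1)},P_b^{(1)}$ as in \eqref{uravn dlya u1}, and depending on $u$ through $P_a^{(1)},P_b^{(1)}$), while $S^{(1)}$ collects the explicit first-order corrections.

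Next I would solve order by order. The $O(1)$ balance of \eqref{Glavnoe ur U} is the implicit equation $u_1=\arctan\bigl(Z^{(1)}(u_1)\bigr)$; solvability and uniqueness of $u_1^\star\in(-\tfrac{\pi}{2},\tfrac{\pi}{2})$ follow since the right-hand side maps this interval into itself and, by the estimate behind Theorem \ref{theorem 2}, is a contraction. Writing $u^\star=u_1^\star+u_2^\star/q+O(1/q^2)$ and linearizing \eqref{Glavnoe ur U} about $u_1^\star$ then gives
\begin{equation*}
u_2^\star=\frac{S^{(1)}(u_1^\star)}{\,1-\frac{d}{du}\arctan\bigl(Z^{(1)}(u)\bigr)\big|_{u=u_1^\star}\,}=:R^{(1)}(u_1^\star),
\end{equation*}
which is the asserted $u_2^\star=R^{(1)}(u_1^\star)$. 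To make this rigorous I would verify that the candidate $d_{1,j}+u_1^\star/q+u_2^\star/q^2$ satisfies \eqref{novoe glavnoe ur 1} up to a residual $O(1/q^3)$, and then invoke the contraction estimate \eqref{Rekurent formula 1} to conclude that the true root $\varphi_{2j-1}$ differs from it by $O(1/n^3)$; recalling $1/q=2/(n+3)$ reproduces \eqref{koren phi1}. The identical scheme with $h$, $d_{2,j}$, $Z_1^{(2)}$ gives the second assertion.

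The main obstacle is the \emph{uniform} remainder estimate over the whole range $\tfrac12 e^{\pi(j-1)}\le q^2$. Unlike Theorem \ref{Teorema sluch 1}, here the hyperbolic terms $\sinh P_b^{(1)},\cosh P_b^{(1)}$ are not exponentially dominant, so one cannot discard the oscillatory terms $\sin P_a^{(1)},\cos P_a^{(1)}$ and the leading equation stays genuinely implicit. Moreover $j$ may grow like $\log n$, so $\cosh P_b^{(1)}$ is polynomially large in $n$, and near the edge $j$ small the factor $\sin d_{1,j}\sim \pi j/q$ is small, forcing $Z^{(1)}$ large and $u_1^\star$ close to $\pm\tfrac{\pi}{2}$. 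The delicate point is therefore to show that $S^{(1)},\,\frac{d}{du}\arctan Z^{(1)}$ and the neglected $O(1/q^2)$ terms are controlled uniformly in $j$ — the denominators $\cos P_a^{(1)}+\cosh P_b^{(1)}$ and $-\cos P_a^{(2)}+\cosh P_b^{(2)}$ are safely bounded below by $\cosh(\sqrt3\pi/2)-1>0$ for $j\ge 1$, but the competition between the small $\sin d_{1,j}$ and the large $\cosh P_b^{(1)}$ must be tracked carefully, in particular in the transition zone $\tfrac12 e^{\pi(j-1)}\approx q^2$ where this case meets Theorem \ref{Teorema sluch 1}. Once this uniformity is secured, the argument above yields the expansion with a remainder $O(1/n^3)$ uniform in $j$.
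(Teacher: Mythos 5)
Your proposal follows essentially the same route as the paper: substitute $\varphi=d_{1,j}+u_1/q+u_2/q^2$, use the small-$\varphi$ expansions of $c,b,B,C$ (valid to $O(1/q^2)$ precisely because $\tfrac12 e^{\pi(j-1)}\le q^2$), expand $f$ as $Z_1^{(1)}+(Z_2^{(1)}+Z_3^{(1)}u_2)\tfrac1q+O(1/q^2)$, match orders to get the implicit equation $u_1=\arctan Z_1^{(1)}$ and the correction $u_2^\star=R^{(1)}(u_1^\star)$ (your linearized formula is algebraically identical to the paper's $Z_2^{(1)}/(1+(Z_1^{(1)})^2-Z_3^{(1)})$), and settle existence/uniqueness of $u_1^\star$ by the contraction bound $(Z^{(1)})'<1$, which is the paper's Lemma \ref{lemma o shod 2}. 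This matches the paper's proof in both structure and substance.
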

The following result gives us the asymptotic formulas for eigenvalues $\lambda$.
\begin{theorem}
	\label{posled teorema} Let $a(t)=(t-2+\frac{1}{t})^3$. Then, as $n \rightarrow \infty$
	\begin{enumerate}
		\item
		$$\lambda_{2j-1}^{(n)}=g(d_{1,j})+g'(d_{1,j})\frac{2u_1^{\star}}{n+3}+\frac{4u_2^{\star}g'(d_{1,j})+  2(u_1^{\star})^2g''(d_{1,j})}{(n+3)^2}+o\left(\frac{1}{n^2}\right),$$
		where $u_1^{\star}$ and $u_2^{\star}$ is defined in the same way as in the theorem \ref{Teorema sluch 1} if $d_{1,j}: \:\frac{1}{2}e^{\pi(j-1)}>q^2$, and in the same way as in the theorem \ref{theorem sluch 2} if $d_{1,j}: \:\frac{1}{2}e^{\pi(j-1)}\leq q^2$.
		\item
		$$\lambda_{2j}^{(n)}=g(d_{2,j})+g'(d_{2,j})\frac{2w_1^{\star}}{n+3}+\frac{4w_2^{\star}g'(d_{2,j})+2(w_2^{\star})^2g''(d_{2,j})}{(n+3)^2}+o\left(\frac{1}{n^2}\right),$$
		where $w_1^{\star}$ and $w_2^{\star}$ is defined in the same way as in the theorem \ref{Teorema sluch 1} if $d_{2,j}: \:\frac{1}{2}e^{\pi(j-1)}>q^2$, and in the same way as in the theorem \ref{theorem sluch 2} if $d_{2,j}: \:\frac{1}{2}e^{\pi(j-1)}\leq q^2$.
	\end{enumerate}
\end{theorem}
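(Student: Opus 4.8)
The plan is to obtain the eigenvalue asymptotics directly from the already-established asymptotics of $\varphi$ in Theorems \ref{Teorema sluch 1} and \ref{theorem sluch 2}, simply by feeding them through the relation $\lambda = g(\varphi)$ and Taylor-expanding $g$ to second order. By construction each eigenvalue of $T_n(a)$ equals $g(\varphi_k^{(n)})$, where $\varphi_k^{(n)}$ is the corresponding root of \eqref{novoe glavnoe ur 1} or \eqref{novoe glavnoe ur 2} produced by the fixed-point scheme. Thus the entire content of this theorem is a composition of asymptotic expansions, and no new spectral analysis is required.

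First I would write, for the odd-indexed case,
\[
\delta_n := \varphi_{2j-1}-d_{1,j} = \frac{2u_1^{\star}}{n+3}+\frac{4u_2^{\star}}{(n+3)^2}+O\!\left(\frac{1}{n^3}\right),
\]
which is precisely \eqref{koren phi1}. Since $g(\varphi)=-\left(2\sin\frac{\varphi}{2}\right)^6$ is a trigonometric polynomial, it is $C^\infty$ on $[0,2\pi]$ with all derivatives bounded there, so Taylor's theorem with cubic remainder gives
\[
\lambda_{2j-1}^{(n)}=g(\varphi_{2j-1})=g(d_{1,j})+g'(d_{1,j})\,\delta_n+\tfrac12 g''(d_{1,j})\,\delta_n^2+\tfrac16 g'''(\xi_n)\,\delta_n^3,
\]
with $\xi_n$ between $d_{1,j}$ and $\varphi_{2j-1}$, and the last term is $O(\delta_n^3)=O(n^{-3})$.

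Next I would substitute the expansion of $\delta_n$ and collect by powers of $(n+3)^{-1}$, using $\delta_n^2=\dfrac{4(u_1^{\star})^2}{(n+3)^2}+O(n^{-3})$. The linear term $g'(d_{1,j})\delta_n$ contributes $\dfrac{2u_1^{\star}g'(d_{1,j})}{n+3}$ at first order and $\dfrac{4u_2^{\star}g'(d_{1,j})}{(n+3)^2}$ at second order, while the quadratic term contributes $\tfrac12 g''(d_{1,j})\cdot\dfrac{4(u_1^{\star})^2}{(n+3)^2}=\dfrac{2(u_1^{\star})^2 g''(d_{1,j})}{(n+3)^2}$. Summing reproduces exactly the stated formula, the second-order coefficient being $4u_2^{\star}g'(d_{1,j})+2(u_1^{\star})^2 g''(d_{1,j})$, i.e. the square of the \emph{leading} $\varphi$-coefficient times $\tfrac12 g''$. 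The identical computation with $w_1^{\star},w_2^{\star},d_{2,j}$ in place of $u_1^{\star},u_2^{\star},d_{1,j}$ handles the even-indexed eigenvalues, and it is this derivation that fixes the quadratic coefficient as $2(w_1^{\star})^2 g''(d_{2,j})$.

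The main obstacle I anticipate is the uniformity of the error term, which is why the statement records the weaker $o(1/n^2)$ rather than the naive $O(1/n^3)$. As $j$ ranges over its full index set and $n\to\infty$, the base point $d_{1,j}$ can approach the degenerate endpoints $\varphi=0$, where $g^{(k)}(0)=0$ for $k=1,\dots,5$, and $\varphi=\pi$, where $g'(\pi)=0$. Near $d_{1,j}=0$ the coefficient $u_2^{\star}$ carries the factor $\bigl(\sin^2 d_{1,j}+4C^2(d_{1,j})\bigr)^{-1}$, which may grow, so one must check that its product with $g'(d_{1,j})$ and with the $O(n^{-3})$ remainder still stays $o(n^{-2})$. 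The high-order vanishing of the derivatives of $g$ at the endpoints is exactly what compensates this growth, but the compensation is delicate and only the uniform bound $o(1/n^2)$ survives over the whole range of $j$; establishing this uniform control, rather than the Taylor expansion itself, is the substantive step.
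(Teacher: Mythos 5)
Your proposal matches the paper's own proof essentially verbatim: both substitute the expansion of $\varphi_{2j-1}$ from Theorems \ref{Teorema sluch 1} and \ref{theorem sluch 2} into a second-order Taylor expansion of $g$ about $d_{1,j}$ (resp.\ $d_{2,j}$) and collect powers of $(n+3)^{-1}$; your added remarks on uniformity of the remainder near the endpoints go beyond what the paper writes down but do not change the method. Note that your computation correctly yields $2(w_1^{\star})^2 g''(d_{2,j})$ in the even-indexed case, which indicates that the $(w_2^{\star})^2$ appearing in the stated formula is a typo.
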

The following result gives us the asymptotic formulas for the extreme eigenvalues near zero.
\begin{theorem}\label{extreme eigenvalues}
	Let $g(\varphi) = a(e^{i\varphi})=-\left(2\sin\frac{\varphi}{2}\right)^6$.
	\begin{enumerate}[i)]
		\item If $d_{1,j}\to 0$ as $n\to\infty$, then
		\[\lambda_{2j-1}^{(n)}=-\frac{(2\pi j+2u_1^{*})^6}{(n+3)^6}-\frac{24u_2^*(2\pi j+2u_1^*)^5}{(n+3)^7}+\Delta_1(n,j),\]
		where $\vert\Delta_1(n,j)\vert\le M_1\left(\frac{d_{1,j}^5}{n^3}+d_{1,j}^{10}\right)$ and the constant $M_1$ does not depend in $j$ and $n$.
		\item If $d_{2,j}\to 0$ as $n\to\infty$, then
		\[\lambda_{2j}^{(n)}=-\frac{((2j+1)\pi+2w_1^{*})^6}{(n+3)^6}-\frac{24w_2^*((2j+1)\pi+2w_1^*)^5}{(n+3)^7}+\Delta_2(n,j),\]
		where $\vert\Delta_2(n,j)\vert\le M_2\left(\frac{d_{2,j}^5}{n^3}+d_{2,j}^{10}\right)$ and the constant $M_2$ does not depend in $j$ and $n$.
	\end{enumerate}
\end{theorem}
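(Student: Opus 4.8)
Both statements are proved in the same way; I would treat $(i)$ in full and read off $(ii)$ at the end. The starting point is the exact relation $\lambda_{2j-1}^{(n)}=g(\varphi_{2j-1})$ together with the expansion of the root supplied by Theorems \ref{Teorema sluch 1} and \ref{theorem sluch 2},
\[
\varphi_{2j-1}=\frac{2\pi j+2u_1^{\star}}{n+3}+\frac{4u_2^{\star}}{(n+3)^2}+O\!\left(\frac{1}{n^3}\right),
\]
where I used $2\pi j=(n+3)d_{1,j}$. (Theorem \ref{posled teorema} already expands $g$ about $d_{1,j}$, but only to $o(n^{-2})$; to keep track of the $d_{1,j}$-dependence of the error it is cleaner to compose $g$ directly with this sharper $O(n^{-3})$ expansion.) Since the hypothesis $d_{1,j}\to0$ forces $\varphi_{2j-1}\to0$, the whole computation is an expansion at the left endpoint, and the explicit symbol is what produces the exponent $6$: because $g(\varphi)=-(2\sin\frac{\varphi}{2})^6$ is even and, by property $(iii)$, vanishes to sixth order at $0$, its Maclaurin series is
\[
g(\varphi)=-\varphi^6+\tfrac14\varphi^8-\cdots .
\]
The two terms displayed in the theorem are precisely the first two terms of $-\varphi_{2j-1}^6$, so the problem is to expand $-\varphi_{2j-1}^6$ and to control everything left over.

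Next I would pin down the sizes of $u_1^{\star}$ and $u_2^{\star}$ as $d_{1,j}\to0$, which are needed for the error bounds but are not recorded earlier. Writing $w=1-\cos\varphi$, the identity $\sin(\beta)\,e^{-\pi i/3}=\sqrt{2w-w^2e^{2\pi i/3}}$ gives the endpoint expansions
\[
\frac{B(\varphi)}{\sin\varphi}=1+\tfrac{3}{16}\varphi^2+\cdots,\qquad
\frac{C(\varphi)}{\sin\varphi}=\tfrac{\sqrt3}{16}\varphi^2+\cdots,
\]
consistent with the coefficients $B_1^{(1)},C_1^{(1)}$ recorded before Theorem \ref{theorem sluch 2}. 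Feeding these into $u_1^{\star}=\arctan\!\big(2C(d_{1,j})/\sin d_{1,j}\big)$ and into the formula for $u_2^{\star}$ yields $u_1^{\star}=\tfrac{\sqrt3}{8}d_{1,j}^2+O(d_{1,j}^4)$ and $u_2^{\star}=\tfrac{3}{32}d_{1,j}^3+O(d_{1,j}^5)$, and the same orders persist in the regime of Theorem \ref{theorem sluch 2}, where the hyperbolic terms only shift the balances by $O(e^{-\sqrt3\pi j})$. Hence $A:=2\pi j+2u_1^{\star}$ satisfies $A/(n+3)=d_{1,j}+O(d_{1,j}^2/n)$, and the approximate angle $\psi:=A/(n+3)+4u_2^{\star}/(n+3)^2$ obeys $\psi=O(d_{1,j})$ and $\varphi_{2j-1}=\psi+O(n^{-3})$.

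The algebraic core is the identity
\[
-\frac{A^6}{(n+3)^6}-\frac{24u_2^{\star}A^5}{(n+3)^7}
=-\Big(\tfrac{A}{n+3}\Big)^6+\frac{4u_2^{\star}}{(n+3)^2}\Big(-6\big(\tfrac{A}{n+3}\big)^5\Big),
\]
whose right-hand side is exactly the two-term Taylor polynomial of $-\psi^6$ expanded about $A/(n+3)$; this is where the constant $24=6\cdot4$ comes from. Writing $\mathrm{Main}$ for the leading part of the claimed formula, I would split
\[
\Delta_1=\big[g(\varphi_{2j-1})-g(\psi)\big]+\big[g(\psi)+\psi^6\big]-\big[\mathrm{Main}+\psi^6\big]
\]
and bound the brackets separately. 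The first equals $g'(\psi)(\varphi_{2j-1}-\psi)+O\!\big(g''(\psi)n^{-6}\big)$ and, since $g'(\psi)=O(d_{1,j}^5)$ and $\varphi_{2j-1}-\psi=O(n^{-3})$, is $O\!\big(d_{1,j}^5/n^3\big)$. The second is $\tfrac14\psi^8+O(\psi^{10})=O(d_{1,j}^8)$. The third is the Taylor remainder of the identity, of order $d_{1,j}^4(u_2^{\star})^2/n^4=O(d_{1,j}^{10}/n^4)$, hence negligible.

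I expect two genuine difficulties. The first is the bookkeeping already visible above: the sixth-power truncation bracket contributes a term of exact order $d_{1,j}^8$, so with only the two terms written in the statement the honest remainder is $O\!\big(d_{1,j}^5/n^3+d_{1,j}^8\big)$ rather than the stated $d_{1,j}^{10}$; to reach a remainder as small as $d_{1,j}^{10}$ one must display the $\tfrac14\varphi^8$-contribution as a further explicit term. The second, more serious, difficulty is uniformity: every $O(\cdot)$ above must hold with a constant independent of $j$ across the entire range in which $d_{1,j}\to0$, a range that straddles the two regimes of Theorems \ref{Teorema sluch 1} and \ref{theorem sluch 2}; in particular the $O(n^{-3})$ remainder of $\varphi_{2j-1}$ and the endpoint expansions of $B$, $C$, $u_1^{\star}$, $u_2^{\star}$ must all be uniform. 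Securing this uniformity is the main work. Granting it, part $(ii)$ follows line for line after the substitutions $u\mapsto w$, $d_{1,j}\mapsto d_{2,j}$, $2\pi j\mapsto(2j+1)\pi$ and $u_1^{\star},u_2^{\star}\mapsto w_1^{\star},w_2^{\star}$.
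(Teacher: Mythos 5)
Your proposal follows essentially the same route as the paper: both substitute the $O(n^{-3})$ expansion $\varphi_{2j-1}=d_{1,j}+u_1^{*}/q+u_2^{*}/q^{2}+O(q^{-3})$ into $g$ and expand $\sin^{6}$ at the left endpoint, and your ``algebraic core'' identity is exactly the paper's step of factoring out $C_j^{6}d_{1,j}^{6}$ with $C_j=1+u_1^{*}/(\pi j)$ and linearizing the sixth power in the $u_2^{*}/q^{2}$ increment (this is where $24=6\cdot 4$ arises in both arguments). The one substantive point where you diverge is the error bound, and there you are right and the paper's proof is not: in the displayed chain of equalities the quantity $-\frac{d_{1,j}^{2}C_j^{2}}{24}\bigl(1+\cdots\bigr)^{3}$, which is of order $d_{1,j}^{2}$, is silently absorbed into $O\bigl(d_{1,j}^{4}\bigr)$ when passing to the bracket $\bigl[1+\frac{u_2^{*}}{d_{1,j}C_jq^{2}}+O\bigl(\frac{1}{d_{1,j}q^{3}}\bigr)+O\bigl(d_{1,j}^{4}\bigr)\bigr]^{6}$; carried through honestly it produces a contribution $\tfrac14 C_j^{8}d_{1,j}^{8}+\cdots$, i.e.\ precisely the $\tfrac14\varphi^{8}$ term of $g$ that you identified. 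Hence with only the two displayed main terms the remainder is genuinely $O\bigl(d_{1,j}^{5}/n^{3}+d_{1,j}^{8}\bigr)$, not $O\bigl(d_{1,j}^{5}/n^{3}+d_{1,j}^{10}\bigr)$; the two bounds agree only when $j$ stays bounded (so $d_{1,j}\asymp 1/n$), while for $1\ll j\ll n$ the $d_{1,j}^{8}$ term dominates both stated error terms. Your uniformity concern is legitimate but secondary: the endpoint expansions of $B$, $C$, $c$, $b$ and the $O(q^{-3})$ root expansion are asserted uniformly in the earlier theorems, so that part is bookkeeping rather than a missing idea. In short, your proof is sound and matches the paper's method, and your diagnosis is correct: either the bound must be weakened to $d_{1,j}^{8}$ or the $\tfrac14\varphi^{8}$ contribution must be displayed as an explicit third term.
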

\section{Auxiliary results}
\label{Vspomogatelnie rezultati}	
In this section, some auxiliary statements will be proved. Now we will introduce auxiliary functions.
$$\alpha_2=\alpha_2(\varphi)=1+(\cos{\varphi}-1)e^{\frac{2\pi i}{3}}$$
\begin{equation}
B_c=B_c(\varphi)=|\alpha_2|
\label{opred B_c}
\end{equation}
\begin{equation}
\psi_c=\psi_c(\varphi)=\arg(\alpha_2)
\label{opred psi_c}
\end{equation}
  \begin{equation}
B_s=B_s(\varphi)=|(1-\alpha_2^2)^\frac{1}{2}|=\sqrt[4]{(1-\cos{\varphi})^2(7-4\cos{\varphi}+\cos^2{\varphi})}
\label{opred B_s}
\end{equation}
\begin{equation}
\psi_s=\psi_s(\varphi)=\arg{((1-\alpha_2^2)^\frac{1}{2})}=\frac{\pi}{2}+\frac{\arctan{\frac{\sqrt{3}(3-\cos{\varphi})}{(-1-\cos{\varphi})}}}{2}
\label{opred psi_s}
\end{equation}
Obviously, $\arg {((1- \alpha_2^2)^\frac{1}{2})}$ has two regular branches,we select one of them.\\
All subsequent statements will be proved when $\varphi \in (0, \pi)$.
\begin{enumerate}
	\item
	\begin{enumerate}
	\item \label{B_s} $B_s(\varphi)$ is increasing function, and $B_s(\varphi) \in (0, 2\sqrt[4]{3})$
	\item \label{psi_s} $\psi_s(\varphi)$ is decreasing function, and $\psi_s(\varphi) \in (\frac{\pi}{4},\frac{\pi}{3})$
	\item \label{B_c} $B_c(\varphi)$ is increasing function, and $B_c(\varphi) \in (1,\sqrt{7})$
	\item \label{psi_c} $\psi_c(\varphi)$ is decreasing function, and $\psi_c(\varphi) \in (-\arctan{\frac{\sqrt{3}}{2}},0)$
	\end{enumerate}
	\begin{proof}We differentiate the corresponding functions, and decompose them into multipliers. The values at the edges of the interval are found by simple substitution:
		\begin{equation}
		B_s'=\dfrac{\sin{(\varphi)}(9-7\cos{(\varphi)}+2\cos^2{(\varphi)})}{2(1-\cos{(\varphi)})(7-4\cos{(\varphi)}+\cos^2{(\varphi)})^{\frac{3}{4}}} > 0.
		\label{B_s'}
		\end{equation}
		So, the function $B_s (\varphi)$ is increasing.
			\begin{equation}
		\psi_s'=-\dfrac{\sqrt{3}\sin{(\varphi)}}{2(7-4\cos{(\varphi)}+\cos^2{(\varphi)})} < 0.
		\label{psi_s'}
		\end{equation}
		So, the function $\psi_s(\varphi)$ is decreasing.
			\begin{equation}
		B_c'=\dfrac{\sin{(\varphi)}(3-2\cos{(\varphi)})}{2\sqrt{(3-3\cos{(\varphi)}+3\cos^2{(\varphi)})}} > 0.
		\label{B_c'}
		\end{equation}
		So, the function $B_c(\varphi)$ is increasing.
			\begin{equation}
		\psi_c'=-\dfrac{\sqrt{3}\sin{(\varphi)}}{2(3-3\cos{(\varphi)}+\cos^2{(\varphi)})} < 0.\label{psi_c'}
		\end{equation}
		So, the function $\psi_c(\varphi)$ is decreasing.
	\end{proof}
	\item \begin{enumerate}		
	\item \label{Bc_cos-Bs_sin} $B_c\cos{(\psi_c)}- B_s\sin{(\psi_s)}>0$
	\item \label{Bc_cos+Bs_sin} $B_c\cos{(\psi_c)}+B_s\sin{(\psi_s)}>0$
	\end{enumerate}
	\begin{proof}
	 We show that $B_c>B_s$ and $ \cos{(\psi_c)}>\sin{(\psi_s)}$, from which the statement of this item will follow. Since $B_c>0$ and $B_s>0$, therefore $B_c>B_s$ is equivalent to $B_c^4-B_s^4>0$.
	 $$
	 B_c^4-B_s^4=2-\cos^2{\varphi}>0
	 $$
	Now we show that  $ \cos{(\psi_c)}>\sin{(\psi_s)}$. $\cos{(\psi_c)}=\sin{(\frac{\pi}{2}+\psi_c)}$, at the same time, from the points  (\ref{psi_s} and \ref{psi_c}), it follows that $(\frac{\pi}{2}+\psi_c)$ and $\psi_s$, are in the first quadrant, so $\sin{(\frac{\pi}{2}+\psi_c)}>\sin{(\psi_s)}$ is equivalent to $\frac{\pi}{2}+\psi_c-\psi_s>0$. It's not hard to get that
	$$\psi_c'-\psi_s'=-\dfrac{\sqrt{3}\sin{(\varphi)}(4-\cos{(\varphi)})}{2(7-4\cos{(\varphi)}+\cos^2{(\varphi)})(3-3\cos{(\varphi)}+\cos^2{(\varphi)})} < 0.$$
	Which means $\frac{\pi}{2}+\psi_c-\psi_s>\frac{\pi}{2}+\psi_c(\pi)-\psi_s(\pi)=\frac{\pi}{2}-\arctan{\frac{\sqrt{3}}{2}}-\frac{\pi}{4}>0$, so  $ \cos{(\psi_c)}>\sin{(\psi_s)}$. This means that the statement (\ref{Bc_cos-Bs_sin}) is true.
	From the statements (\ref{B_s} - \ref{psi_c}), it follows that both terms in the expression $B_c\cos{(\psi_c)}+B_s\sin{(\psi_s)}$ is positive, which means that the statement(\ref{Bc_cos+Bs_sin}) is also true.	
	\end{proof}
	\item \begin{enumerate}
	\item \label{a} $c(\varphi)$ is increasing function.
	\item \label{b} $b(\varphi)$ is  increasing function.
	\item \label{a'} $c'(\varphi)$ is decreasing function, and $c'(0)=\frac{1}{2}$, $c'(\pi)=0$.
	\item \label{b'} $b'(\varphi)$ is decreasing function, and $b'(0)=\frac{\sqrt{3}}{2}$, $b'(\pi)=0$.
\end{enumerate}
	\begin{proof}
	$$(\cos{\beta})'= -\beta '\sin{\beta},$$ so	$$\beta '=-\dfrac{(\cos{\beta})'}{\sin{\beta}},$$ from where it is not difficult to get that
	\begin{equation}
	\beta '=\dfrac{\sin{\varphi}}{\sin{\beta}}e^{\frac{2 \pi i}{3}}
	\label{beta'}
	\end{equation}
	Then $$c'(\varphi)=Re(\beta ')=\dfrac{\sin{\varphi}}{B_s}\cos{\left(\frac{2 \pi}{3}-\psi_s\right)},$$
	$$b'(\varphi)=Re(\beta ')=\dfrac{\sin{\varphi}}{B_s}\sin{\left(\frac{2 \pi}{3}-\psi_s\right)}.$$
	Since $\psi_s(\varphi) \in (\frac{\pi}{4},\frac{\pi}{3})$, then $c'(\varphi)>0$, $b'(\varphi)>0$ so the functions $a(\varphi)$ and $b(\varphi)$ are increasing.
	Let's find $ \beta''$.
	$$\beta''=\left(\dfrac{\sin{(\varphi)}}{\sin{(\beta)}}e^{\frac{2 \pi i}{3}} \right)'=\dfrac{\cos{(\varphi)}\sin{(\beta)}-\sin{(\varphi)}\cos{(\beta)}\beta'}{\sin^2{(\beta)}}e^{\frac{2 \pi i}{3}} $$
	Substituting the equality (\ref{beta'}) and reducing to a common denominator, we get the following equality:
	$$\beta''=\dfrac{\cos{(\varphi)}\sin^2{(\beta)}e^{\frac{2 \pi i}{3}}-\sin^2{(\varphi)}\cos{(\beta)}e^{-\frac{2 \pi i}{3}}}{\sin^3{(\beta)}}.$$
	Given that $\sin^2{t}=1-\cos^2{t}$ we get the equality:
	$$\beta''=\dfrac{\cos{(\varphi)}(1-\cos^2{(\beta)})e^{\frac{2 \pi i}{3}}-(1-\cos^2{(\varphi)})\cos{(\beta)}e^{-\frac{2 \pi i}{3}}}{\sin^3{(\beta)}}$$
	Substituting the equality (\ref{ur opred beta i gamma}) we get
	$$\beta''=\dfrac{\cos{(\varphi)}(1-(1-(1-\cos{(\varphi)})e^{\frac{2 \pi i}{3}})^2)e^{\frac{2 \pi i}{3}}-(1-\cos^2{(\varphi)})(1-(1-\cos{(\varphi)})e^{\frac{2 \pi i}{3}})e^{-\frac{2 \pi i}{3}}}{\sin^3{(\beta)}}$$
	Then expanding brackets and combining like terms we get that:
	\begin{equation}
	\beta''=\dfrac{\sqrt{3}(1-\cos{(\varphi)})^2}{\sin^3{(\beta)}}e^{\frac{\pi i}{6}}
	\label{beta''}
	\end{equation}
		Since $c''=\Re(\beta'')$, $b''=\Im(\beta'')$ and also that $\psi_s \in (\frac{\pi}{4},\frac{\pi}{3})$ we get
	$$c''=\dfrac{\sqrt{3}(1-\cos{(\varphi)})^2}{B_s^3}\cos{\left(\frac{\pi}{6}-3\psi_s\right)}<0,$$
	$$b''=\dfrac{\sqrt{3}(1-\cos{(\varphi)})^2}{B_s^3}\sin{\left(\frac{\pi}{6}-3\psi_s\right)}<0,$$
	so the functions $c'(\varphi)$ and $b'(\varphi)$ are decreasing.
		Let's find $c'(0)=\Re{(\beta'(0))}$.
	$$
	\beta'(0)=\lim_{\varphi \to 0}\dfrac{\sin{(\varphi)}}{B_s}e^{\left(\frac{2 \pi i}{3}-\psi_s\right)}
	$$
	Expand $\sin{(\varphi)}$ and $B_s$ in a Taylor series up to the first term is not difficult to get that
	$$\beta'(0)=e^{\frac{\pi i}{3}}.$$
	It follows that $c'(0)=\frac{1}{2}$ and $b'(0)=\frac{\sqrt{3}}{2}$. Well $c'(\pi)$ and $b'(\pi)$ can be found by a simple substitution.	
	\end{proof}
	\item \begin{enumerate}
	\item \label{a/phi} $\frac{c(\varphi)}{\varphi}$ is decreasing function.
	\item \label{b/phi} $\frac{b(\varphi)}{\varphi}$ is decreasing function.
	\item \label{B_c/B_s}$\frac{B_c}{B_s}$ is decreasing function.
	\item \label{B_s/sin}$\frac{B_s}{\sin{(\varphi)}}$ is increasing function.
	\end{enumerate}
\begin{proof}
	Find the derivative of the function $ \frac {c (\varphi)} {\varphi} $ and show that it is negative.
	$$\left(\frac{c(\varphi)}{\varphi}\right)'=\dfrac{c'(\varphi)\varphi-c(\varphi)}{\varphi^2}.$$
	In order to prove that this derivative is negative, it is sufficient to show that $c'(\varphi)\varphi-c(\varphi)<0$.
	$$(c'(\varphi)\varphi-c(\varphi))'=c''(\varphi)<0$$
	From which it follows that
	$c'(\varphi)\varphi-c(\varphi)<c(0)=0$, this means that the statement (\ref{a/phi}) is true. The statement (\ref{b/phi}) is proved similarly.\\
	Since $\frac{B_c}{B_s}>0$, decreasing $\frac{B_c}{B_s}$ is equivalent to decreasing $\frac{B_c^4}{B_s^4}$. By taking the derivative of the function $\frac{B_c^4}{B_s^4}$ and expand into factors we get that:
	$$\left(\frac{B_c^4}{B_s^4}\right)'=-\dfrac{2\sin{(\varphi)}(3-3\cos{(\varphi)}+\cos^2{(\varphi)})(2-\cos{(\varphi)})(3+\cos{(\varphi)})}{(1-\cos{(\varphi)})^3(7-4\cos{(\varphi)}+\cos^2{(\varphi)})^2} < 0.$$
	This means that the function $\frac{B_c}{B_s}$ decreases.\\
	Since $\frac{B_s}{\sin{(\varphi)}}>0$ increasing $\frac{B_s}{\sin{(\varphi)}}$ is equivalent to decreasing  $\frac{B_s^4}{\sin^4{(\varphi)}}$. By taking the derivative of the function $\frac{B_s^4}{\sin^4{(\varphi)}}$ and expand into factors we get that:
	$$\left(\frac{B_s^4}{\sin^4{(\varphi)}}\right)'=\dfrac{6(3-\cos{(\varphi)})(1-\cos{(\varphi)})^3}{\sin^5{(\varphi)}} > 0.$$
	This means that the function $\frac{B_s}{\sin(\varphi)}$ increases.
\end{proof}
\item \begin{enumerate}
	\item \label{a v 0} If $\varphi$ is sufficiently small then $c=\frac{1}{2}\varphi-\frac{3}{48}\varphi^3+o(\varphi^3)$.
	\item \label{b v 0} If $\varphi$ is sufficiently small then $b=\frac{\sqrt{3}}{2}\varphi-\frac{\sqrt{3}}{48}\varphi^3+o(\varphi^3)$.
	\item \label{B v 0} If $\varphi$ is sufficiently small then $B=\varphi-\frac{1}{48}\varphi^3+o(\varphi^3)$.
	\item \label{C v 0} If $\varphi$ is sufficiently small then $C=\frac{\sqrt{3}}{16}\varphi^3+o(\varphi^3)$.
\end{enumerate}
	\begin{proof}
	 Let's find $\sin{\beta}$ near the point $\varphi=0$, with an accuracy of  $o(\varphi)$. From the formulas (\ref{opred B_s}) and  (\ref{opred psi_s})  it follows:
	$$
	B_s=\sqrt[4]{(1-1+\frac{\varphi^2}{2}+o(\varphi^2))^2(7-4+1+o(\varphi))}=\varphi+o(\varphi)
	$$
	$$
	\psi_s=\frac{\pi-\arctan{\frac{\sqrt{3}(3-1+o(\varphi))}{2+o(\varphi)}}}{2}=\frac{\pi-\arctan{\sqrt{3}}+o(\varphi)}{2}=\frac{\pi}{3}+o(\varphi)
	$$
	From which it follows that near the point $\varphi=0$
	\begin{equation}
	\sin{\beta}=\varphi e^{\frac{\pi i}{3}} + o(\varphi)
	\end{equation}
	Then from the formulas (\ref{beta'}) and (\ref{beta''}) it is not difficult to get:
	\begin{equation}
	\beta'(0)=e^{\frac{\pi i}{3}}
	\label{b' v 0}
	\end{equation}
	\begin{equation}
	\beta''(0)=0
	\label{b'' v 0}
	\end{equation}
	Now we'll find $\beta'''(0)$
	$$
	\begin{aligned}
	\beta'''(0)&=\lim_{\varphi \to 0}{\dfrac{\sqrt{3}(2(1-\cos{(\varphi)})\sin{(\varphi)} \sin^3{(\beta(\varphi))}-3(1-\cos{(\varphi)})^2\sin^2{(\beta(\varphi))}\cos{(\beta(\varphi))}\beta'(\varphi))}{\sin^6{(\beta(\varphi))}}}e^{\frac{\pi i}{6}}\\
	&=\lim_{\varphi \to 0}{\dfrac{\sqrt{3}(\varphi^6e^{\pi i}-\frac{3}{4}\varphi^6e^{\pi i})}{\varphi^6}}e^{\frac{\pi i}{6}}=\frac{\sqrt{3}}{4}e^{\frac{ 7\pi i}{6}}
	\end{aligned}
	$$
	We get that:
	\begin{equation}
	\beta'''(0)=\frac{\sqrt{3}}{4}e^{\frac{ 7\pi i}{6}}
	\label{b''' v 0}
	\end{equation}
	Since $ \beta(0)=0$, and taking into account (\ref{b' v 0}), (\ref{b'' v 0}), and (\ref{b''' v 0}) near the point $\varphi=0$, we have:
	\begin{equation}
	\beta(\varphi)=\varphi\left(\frac{1}{2}+\frac{\sqrt{3}i}{2}\right)-\varphi^3\left(\frac{3}{48}+\frac{\sqrt{3}i}{48}\right)+o(\varphi^3)
	\label{beta v 0}
	\end{equation}
	Hence the statements (\ref{a v 0}) and (\ref{b v 0}) is true.\\
	Let's introduce the function $D(\varphi)=\sin{(\beta)}e^{\frac{-\pi i}{3}}$ then $B=\Re{(D)}$,  $C=-\Im{(D)}$. Let's find the value of the first three derivatives of the function $D (\varphi)$ at the point $ \varphi=0$.
	$$
	D'(\varphi)=\cos{(\beta)}\beta'e^{\frac{-\pi i}{3}}
	$$
	Taking into account the formula (\ref{b' v 0})
	\begin{equation}
	D'(0)=1
	\label{D' v 0}
	\end{equation}
	$$
	D''(\varphi)=(\cos{(\beta)}\beta''-\sin{(\beta)}(\beta')^2)e^{\frac{-\pi i}{3}}
	$$
	Given that $ \beta(0)=0$, and the formulas (\ref{b'' v 0}), we get
	\begin{equation}
	D''(0)=0
	\label{D'' v 0}
	\end{equation}
	$$
	D'''(\varphi)=(\cos{(\beta)}\beta'''-3\sin{(\beta)}\beta'\beta''-\cos{(\beta)}(\beta')^3)e^{\frac{-\pi i}{3}}
	$$
	Using the formulas (\ref{b' v 0}), (\ref{b'' v 0}), and (\ref{b''' v 0}), it is not difficult to get that:
	\begin{equation}
	D'''(0)=\frac{1}{8}-\frac{3\sqrt{3}i}{8}
	\label{D''' v 0}
	\end{equation}
	Then, taking into account (\ref{D' v 0}), (\ref{D'' v 0}) and (\ref{D''' v 0}) near the point $ \varphi=0$, we have:
	\begin{equation}
	D(\varphi)=\varphi+\varphi^3\left(\frac{1}{48}-\frac{\sqrt{3}i}{16}\right)+o(\varphi^3)
	\label{D v 0}
	\end{equation}
		Hence the statements (\ref{B v 0}) and (\ref{C v 0}) is true.\\	
	\end{proof}
\end{enumerate}
\section{Chebyshev polynomial}
\label{Razdel pol Cheb}
 To solve this problem, we need to solve the equation $\det{ T_n(a-g(\varphi))}=0 $, $\varphi \in (0,\pi) $. To find the determinant we will use the results obtained in the paper \cite{Elou}.
Let's define Chebyshev polynomials $\{Q_n\}$, $\{U_n\}$, $\{V_n\}$, $\{W_n\}$, which satisfy the same recurrent formula
\[Q_{n+1}(x)=2xQ_{n}(x)-Q_{n-1}(x),\;  n=1,2,\dots\]
and the different initial conditions are:
\[\begin{aligned}
Q_{0}(x)=U_{0}(x)=1,& & & 2Q_{1}(x)=U_{1}(x)=2x\\
W_{0}(x)=V_{0}(x)=1,& & & W_{1}(x)=V_{1}(x)+2=2x+1
\end{aligned} \]
It is easy to check that these polynomials satisfy the following conditions
\begin{equation}
\begin{aligned}
Q_{n}(\cos{\theta})=\cos{n\theta},& & & U_{n}(\cos{\theta})=\frac{\sin{(n+1)\theta}}{\sin{\theta}} \\
V_{n}(\cos{\theta})=\frac{\cos{(n+\frac{1}{2})\theta}}{\cos{\frac{\theta}{2}}},& & & U_{n}(\cos{\theta})=\frac{\sin{(n+\frac{1}{2})\theta}}{\sin{\frac{1}{2}\theta}} \\
\end{aligned}
\label{Svoistva pol Cheb}
\end{equation}
In \cite{Elou}, for the generating polynomial $a(t)= \sum\limits_{k=-r}^{r}a_kt^k $, where $a_r\ne 0 $, $a_k=a_{-k}$, the following theorem was proved.\\
\begin{theorem}[{\cite{Elou} Theorem 1}]
	\label{Theorem Pol Cheb}
	Let $\xi_j $ and $\frac{1}{\xi_j}$ be the (distinct) zeros of the polynomial $g_1(t)=t^ra(t)$. Then, for all $p\geq 1$ $\det{T_{2p}}$ equals
	\[\frac{a_{r}^{2p}}{2^{r(r-1)}}\times
	\frac{\begin{vmatrix}
		V_p(\alpha_1)&\dots & V_p(\alpha_r)\\
		\vdots & \ddots & \vdots\\
		V_{p+r-1}(\alpha_1)&\cdots & V_{p+r-1}(\alpha_r)
		\end{vmatrix}}{\prod\limits_{1 \leq i\leq j \leq r}{(\alpha_j-\alpha_i)}}\times
	\frac{\begin{vmatrix}
		W_p(\alpha_1)&\dots & W_p(\alpha_r)\\
		\vdots & \ddots & \vdots\\
		W_{p+r-1}(\alpha_1)&\cdots & W_{p+r-1}(\alpha_r)
		\end{vmatrix}}{\prod\limits_{1 \leq i\leq j \leq r}{(\alpha_j-\alpha_i)}}
	\]
	and $\det{T_{2p+1}}$ equals
	\[\frac{(-1)^{r}a_{r}^{2p}}{2^{r(r-2)}}\times
	\frac{\begin{vmatrix}
		U_p(\alpha_1)&\dots & U_p(\alpha_r)\\
		\vdots & \ddots & \vdots\\
		U_{p+r-1}(\alpha_1)&\cdots & U_{p+r-1}(\alpha_r)
		\end{vmatrix}}{\prod\limits_{1 \leq i\leq j \leq r}{(\alpha_j-\alpha_i)}}\times
	\frac{\begin{vmatrix}
		Q_{p+1}(\alpha_1)&\dots & Q_{p+1}(\alpha_r)\\
		\vdots & \ddots & \vdots\\
		Q_{p+r-1}(\alpha_1)&\cdots & Q_{p+r-1}(\alpha_r)
		\end{vmatrix}}{\prod\limits_{1 \leq i\leq j \leq r}{(\alpha_j-\alpha_i)}}
	,\]
	where $\alpha_k=\frac{1}{2}(\xi_k+\frac{1}{\xi_k})$ $(k=1,\dots,r)$ are the zeros of the polynomial $h_1(x)=a_0+2\sum\limits_{k=1}^{r}{a_k}Q_k(x)$.
\end{theorem}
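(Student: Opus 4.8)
The plan is to evaluate the $n\times n$ determinant by exploiting the palindromic symmetry of the symbol and reducing it to a closed $r\times r$ expression in the Chebyshev values. \emph{Step 1 (passage to the Chebyshev variable).} Since $a_k=a_{-k}$, the polynomial $g_1(t)=t^r a(t)$ is self-reciprocal of degree $2r$, so its zeros split into reciprocal pairs $\{\xi_k,1/\xi_k\}$. Setting $\alpha=\tfrac12(t+t^{-1})$ and using $t^k+t^{-k}=2Q_k(\alpha)$ (immediate from (\ref{Svoistva pol Cheb}) with $t=e^{\mathrm i\theta}$), one obtains $a(t)=a_0+2\sum_{k=1}^r a_k Q_k(\alpha)=h_1(\alpha)$; hence the $\alpha_k=\tfrac12(\xi_k+1/\xi_k)$ are precisely the $r$ zeros of $h_1$. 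This is the substitution that will turn powers of the roots into Chebyshev polynomials.

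\emph{Step 2 (centrosymmetric factorization).} A symmetric Toeplitz matrix is centrosymmetric, $JT_nJ=T_n$, where $J$ is the exchange (flip) matrix. A centrosymmetric matrix is orthogonally similar to a block-diagonal matrix built from its reflection-even and reflection-odd parts, so $\det T_n(a)$ factors as a product of two determinants: for $n=2p$ the two blocks have size $p$, and for $n=2p+1$ they have sizes $p$ and $p+1$. Each block is again a banded, bandwidth-$r$ matrix that agrees with a Toeplitz block except for a correction in one corner produced by the reflection. The even/odd eigensequences are $\xi^m\pm\xi^{-m}$, and the distinction between an integer and a half-integer index shift in these sequences is exactly what selects the $Q$- and $U$-families for odd $n$ and the $V$- and $W$-families for even $n$.

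\emph{Step 3 (reduction of each block to an $r\times r$ Chebyshev determinant).} The interior rows of each block encode the order-$2r$ linear recurrence whose characteristic polynomial is $g_1$, with roots $\xi_k,1/\xi_k$. Writing the general solution in the reciprocal-symmetric basis and imposing the $r$ boundary relations of the block, the block determinant becomes a Casoratian-type $r\times r$ determinant whose entries are consecutive Chebyshev polynomials $V_{p+i-1}(\alpha_k)$ (respectively $W$, $U$, $Q$), the half-integer shift being absorbed into the choice of Chebyshev kind. Dividing by the Vandermonde $\prod_{i<j}(\alpha_j-\alpha_i)$ turns each factor into a symmetric polynomial of the $\alpha_k$; this both matches the antisymmetry of the numerator and removes the apparent singularities at coincident roots, so the formula extends to repeated $\alpha_k$ by continuity.

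\emph{Step 4 (normalization, and the main difficulty).} The scalar prefactors $a_r^{2p}2^{-r(r-1)}$ and $(-1)^r a_r^{2p}2^{-r(r-2)}$, together with the Vandermonde denominators, are then fixed by comparing the leading order in $p$ and checking a base case; e.g. for $r=1$ (tridiagonal) the identity $V_p(\alpha_1)W_p(\alpha_1)=U_{2p}(\alpha_1)$ recovers the classical $\det T_n=a_1^n U_n(\alpha_1)$. I expect the main obstacle to be Step 3: carrying the reduction of each $\sim p\times p$ boundary determinant down to the compact $r\times r$ Chebyshev determinant with the correct kind, sign, power of $a_r$, and Vandermonde normalization, and doing so uniformly in the parity of $n$ while justifying the coincident-root limit.
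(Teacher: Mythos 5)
First, a point of comparison: the paper does not prove Theorem \ref{Theorem Pol Cheb} at all. It is imported verbatim as Theorem 1 of \cite{Elou} and used as a black box, so there is no in-paper argument to measure your proposal against; what follows assesses your proposal on its own terms.

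Your write-up is a plausible plan rather than a proof, and the gap sits exactly where you yourself place it, in Step 3. Steps 1 and 2 are sound and standard: the self-reciprocity of $g_1$ together with $t^k+t^{-k}=2Q_k(\alpha)$ does identify the $\alpha_k$ as the zeros of $h_1$, and the centrosymmetric similarity $JT_nJ=T_n$ does split $\det T_n$ into a product of two determinants of sizes $p,p$ (resp.\ $p,p+1$). But the entire content of the theorem is compressed into the single assertion that each $\sim p\times p$ block determinant ``becomes a Casoratian-type $r\times r$ determinant'' in the right Chebyshev family. The blocks produced by the reflection are banded Toeplitz plus a Hankel-type corner perturbation; turning their determinants into $r\times r$ determinants of boundary data requires an explicit solution basis of the order-$2r$ recurrence, an explicit list of the boundary conditions for each block, and a computation showing that the reciprocal-pair combinations $\xi^{m}\pm\xi^{-m}$ with the appropriate integer or half-integer shift collapse precisely to $V_{p+i-1}(\alpha_k)$, $W_{p+i-1}(\alpha_k)$, $U_{p+i-1}(\alpha_k)$, $Q_{p+i}(\alpha_k)$. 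None of that is carried out, and it is also where the distinctness hypothesis on the $\xi_j$ is actually used (your continuity remark about coincident $\alpha_k$ concerns the final formula, not the derivation). Step 4 is likewise incomplete as stated: matching one base case and ``the leading order in $p$'' does not pin down the prefactors unless you first show that the ratio of the two sides is independent of $p$ and of the coefficients; the cleanest repair is to note that for fixed $p$ both sides are polynomials in $a_0,\dots,a_r$ and to compare coefficients of the extremal monomial $a_r^{2p}$. The $r=1$ check via $V_p(\alpha_1)W_p(\alpha_1)=U_{2p}(\alpha_1)$ is correct and a good sanity test, but it cannot substitute for the reduction itself. In short: right skeleton, but the passage from the $p\times p$ reflected blocks to the four specific $r\times r$ Chebyshev determinants with the stated normalizations --- which is the theorem --- is still missing.
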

In our case $g_1(t)=(t^2-2t+1)^{3}-\lambda t^3$, taking into account that $\lambda = g(\varphi)=(2\cos{\varphi}-2)^3$ it is easy to get that:
$$
\begin{aligned}
&\alpha_1=\cos{\varphi}\\
&\alpha_2=1+(\cos{\varphi}-1)e^{\frac{2\pi i}{3}}\\
&\alpha_3=1+(\cos{\varphi}-1)e^{\frac{-2\pi i}{3}}
\end{aligned}
$$
Next, we show that there are such regular functions $ \beta=\beta(\varphi) $ and $\gamma =\gamma (\varphi)$ , $ \varphi \in (-\pi,\pi]$ which will satisfy the equations:
\begin{equation}
\begin{aligned}
&\cos{\beta}=1+(\cos{\varphi}-1)e^{\frac{2\pi i}{3}}=\alpha_2\\
&\cos{\gamma}=1+(\cos{\varphi}-1)e^{\frac{-2\pi i}{3}}=\alpha_3
\label{ur opred beta i gamma}
\end{aligned}
\end{equation}
To do this, it is enough to show that each of the multifunctions $E_2=-i\Log(\alpha_2+i(1-\alpha_2^2)^\frac{1}{2})$ and $E_3=-i\Log(\alpha_3+i(1 - \alpha_3^2)^\frac{1}{2})$ has at least one regular branch for $ \varphi \in (-\pi,\pi]$. Given the notation (\ref{opred B_c})-(\ref{opred psi_s}), it is not difficult to make sure that
 $$
 \alpha_3=B_c e^{-i\psi_c}=\overline{\alpha_2}
 $$
 And also
 $$(1-\alpha_3^2)^\frac{1}{2}=B_s e^{-i\psi_s}$$
 Note that in this case, we choose one of the two regular branches. With this in mind, it is
 sufficient to show that each of the functions $\tilde E_2=-i\Log(B_c e^{i\psi_c}+iB_s e^{i\psi_s})$ and $\tilde E_3=-i\Log(B_c e^{-i\psi_c}+iB_s e^{-i\psi_s})$, has at least one regular branch for $\varphi \in (-\pi,\pi]$. Note also that $B_c$, $\psi_c $, $B_s$, and $\psi_s$ are even functions.
\begin{lm}
	Let $ \varphi\in (-\pi,\pi]$ then the multifunctions $$ \tilde E_2=-i\Log(B_c e^{i\psi_c}+iB_s e^{i\psi_s})$$ and $$ \tilde E_3=-i\Log(B_c e^{-i\psi_c}+iB_s e^{-i\psi_s})$$ have regular branches $\beta (\varphi)$ and $\gamma(\varphi)$ respectively.
\end{lm}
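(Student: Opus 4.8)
\emph{Plan of proof.} The plan is to reduce the existence of a regular branch of $-i\Log(\cdot)$ to two properties of the function standing inside the logarithm: that it is analytic in $\varphi$ and that it never vanishes. Write $w_2(\varphi):=B_c e^{i\psi_c}+iB_s e^{i\psi_s}$ and $w_3(\varphi):=B_c e^{-i\psi_c}+iB_s e^{-i\psi_s}$. Since $\alpha_2=B_c e^{i\psi_c}$ and, for the chosen branch of the root, $(1-\alpha_2^2)^{1/2}=B_s e^{i\psi_s}$ (and likewise for $\alpha_3$), these are exactly $w_2=\alpha_2+i(1-\alpha_2^2)^{1/2}$ and $w_3=\alpha_3+i(1-\alpha_3^2)^{1/2}$. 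Once $w_2$ is known to be an analytic curve lying entirely in $\mathbb{C}\setminus\{0\}$, the covering map $\exp\colon\mathbb{C}\to\mathbb{C}\setminus\{0\}$ allows me to lift it along the interval to a single-valued continuous branch of $\log w_2$; analyticity of $w_2$ then upgrades this to a regular branch, and $\beta:=-i\Log w_2$ is the desired branch of $\tilde E_2$ (similarly $\gamma:=-i\Log w_3$ for $\tilde E_3$). A one-line check confirms this is the right object: $e^{i\beta}=\alpha_2+i(1-\alpha_2^2)^{1/2}$ and $e^{-i\beta}=\alpha_2-i(1-\alpha_2^2)^{1/2}$, so $\cos\beta=\alpha_2$, as required by \eqref{ur opred beta i gamma}.

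The central step is the non-vanishing $w_2(\varphi)\neq 0$, and here the key observation is purely metric: $w_2$ is a sum of two complex numbers of moduli $B_c$ and $B_s$, so $w_2=0$ would force $B_c=B_s$. But this is impossible, because the inequality established above gives $B_c^4-B_s^4=2-\cos^2\varphi\ge 1>0$, hence $B_c>B_s$ throughout $(-\pi,\pi]$. The same argument applies verbatim to $w_3$, whose two summands again have moduli $B_c$ and $B_s$. I would remark that the inequality also disposes of the degenerate point $\varphi=0$, where $B_s=0$ and $w_2=B_c=1\neq 0$.

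For regularity I would argue that $\alpha_2$, being a polynomial in $\cos\varphi$, is entire in $\varphi$, so the only possible failure of analyticity is at a zero of $1-\alpha_2^2$ coming from the square root $(1-\alpha_2^2)^{1/2}$. The factorization $1-\alpha_2^2=-(\cos\varphi-1)e^{2\pi i/3}\bigl(2+(\cos\varphi-1)e^{2\pi i/3}\bigr)$ shows the second factor never vanishes for real $\varphi$ and the first vanishes only at $\varphi=0$ on $(-\pi,\pi]$; since $\cos\varphi-1=-2\sin^2(\varphi/2)$ there contributes a zero of even order, a regular branch of the square root extends across $\varphi=0$, so $w_2$ (and $w_3$) is analytic on the whole interval. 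This is consistent with the local expansion $(1-\alpha_2^2)^{1/2}=\varphi\,e^{\pi i/3}+o(\varphi)$ recorded earlier, and the evenness of $B_c,\psi_c,B_s,\psi_s$ lets me transfer everything from $(0,\pi)$ to $(-\pi,0)$.

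The main obstacle I anticipate is precisely the point $\varphi=0$: there the polar data $(B_s,\psi_s)$ degenerate, since the branch point of the root sits exactly at $B_s=0$, and the representation $B_s e^{i\psi_s}$ is only continuous and not analytic across $\varphi=0$. To handle this I would not work with $(B_s,\psi_s)$ near the origin but with the analytic root $(1-\alpha_2^2)^{1/2}$ itself, using the even order of the zero of $1-\alpha_2^2$ to conclude that the apparent singularity is removable. Everywhere else the verification is routine.
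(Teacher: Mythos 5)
Your proposal is correct, but it reaches the conclusion by a genuinely different route than the paper. The paper places the whole curve $B_c e^{i\psi_c}+iB_s e^{i\psi_s}$ inside the right half-plane, using item 2(a) of Section 3 (namely $B_c\cos\psi_c-B_s\sin\psi_s>0$, extended to $(-\pi,\pi]$ by the evenness of $B_c,\psi_c,B_s,\psi_s$ and checked separately at $\varphi=0$), and then invokes the existence of a regular branch of $\Log$ on a simply connected region omitting the origin. You instead prove only that the curve avoids the origin, via the modulus comparison $B_c>B_s$ (if the sum of the two terms vanished, their moduli $B_c$ and $B_s$ would have to coincide, contradicting $B_c^4-B_s^4=2-\cos^2\varphi\ge 1$), and then lift the path through the covering $\exp\colon\mathbb{C}\to\mathbb{C}\setminus\{0\}$; this is weaker information about the curve, but it suffices because the parameter domain is an interval, and local analyticity of $\log$ away from $0$ upgrades the continuous lift to a regular branch. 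Note that your inequality $B_c>B_s$ is exactly one half of the paper's own proof of item 2(a) (the other half being $\cos\psi_c>\sin\psi_s$), so the two arguments draw on the same underlying computation. Your approach also buys some extra care at $\varphi=0$ that the paper glosses over: since $B_s$ and $\psi_s$ are even, the literal expression $B_se^{i\psi_s}$ has a corner there (it behaves like $\vert\varphi\vert e^{\pi i/3}$), and you correctly switch to the analytic branch of $(1-\alpha_2^2)^{1/2}$, justified by the even-order zero of $1-\alpha_2^2$ at $\varphi=0$, whereas the paper only fixes the normalization $\beta(0)=0$ without addressing regularity across that point. Both arguments are sound for the purposes of the rest of the paper, which only uses $\varphi\in(0,\pi)$.
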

\begin{proof} In order for the multifunction $ \tilde E_2$ to have a regular branch, it is sufficient that the curve $B_c e^{i\psi_c}+iB_s e^{i\psi_s}$ (which is smooth contour) lies inside a simply connected region that does not contain the point zero. In the item \ref{Bc_cos-Bs_sin} of the section \ref{Vspomogatelnie rezultati}, it was shown that $\Re({B_c e^{i\psi_c}+iB_s e^{i\psi_s}})>0$, with $ \varphi \in (0,\pi)$, since $B_c$, $\psi_c $, $B_s$, and $\psi_s$ are even functions, and $B_c(0)\cos{(\psi_c(0))}=1$, $B_s e^{i\psi_s}=0$ then $\Re{(\cos{\beta}+i\sin{\beta})}>0$, when $ \varphi \in (- \pi,\pi]$. This means that the smooth contour $B_c e^{i\psi_c}+iB_s e^{i\psi_s}$ lies in a simply connected region that does not contain the point zero, and therefore the multifunction $ \tilde E_2$ have a regular branch, for $ \varphi \in (-\pi,\pi]$. The function $ \Log$ has an infinite number of regular branches, but to choose one of them, it is enough to determine its value at one point, let's put $ \beta (0)=0$. It is not difficult to make sure that this value meets the conditions (\ref{ur opred beta i gamma}). Similarly, the multifunction $ \tilde E_3$ have a regular branch, when $\varphi \in (-\pi,\pi]$ with the value $\gamma (0)=0$.
\end{proof}
So we proved that there are regular functions $ \beta(\alpha)$ and $\gamma(\varphi)$ which satisfy the equalities (\ref{ur opred beta i gamma}), and
$$
\begin{aligned}
\cos{(\beta)}=B_c e^{i \psi_c}\\
\sin{(\beta)}=B_s e^{i \psi_s}\\
\cos{(\gamma)}=B_c e^{-i \psi_c}\\
\sin{(\gamma)}=B_s e^{-i \psi_s}
\end{aligned}
$$
\section{Proof of main the results}
\begin{proof}[Proof of theorem \ref{Gl_theor}]
	If $n=2p$ then by theorem \ref{Theorem Pol Cheb} we have
	\begin{equation}
	\begin{aligned}
	det{(T_{2p}(a-g(\varphi)))}=\frac{1}{2^6}\times
	\frac{\begin{vmatrix}
		V_p(\cos{\varphi})&V_p(\cos{\beta}) & V_p(\cos{\gamma})\\
		V_{p+1}(\cos{\varphi}) & V_{p+1}(\cos{\beta}) & V_{p+1}(\cos{\gamma})\\
		V_{p+2}(\cos{\varphi})&V_{p+2}(\cos{\beta}) & V_{p+2}(\cos{\gamma})
		\end{vmatrix}}{(\cos{\gamma}-\cos{\beta})(\cos{\gamma}-\cos{\varphi})(\cos{\beta}-\cos{\varphi})}\\
	\times
	\frac{\begin{vmatrix}
		W_p(\cos{\varphi})&W_p(\cos{\beta}) & W_p(\cos{\gamma})\\
		W_{p+1}(\cos{\varphi}) & W_{p+1}(\cos{\beta}) & W_{p+1}(\cos{\gamma})\\
		W_{p+2}(\cos{\varphi})&W_{p+2}(\cos{\beta}) & W_{p+2}(\cos{\gamma})
		\end{vmatrix}}{(\cos{\gamma}-\cos{\beta})(\cos{\gamma}-\cos{\varphi})(\cos{\beta}-\cos{\varphi})}
	\end{aligned}
	\label{obshai formula T2p}
	\end{equation}
	It is easy to check that for $ \varphi \in (0, \pi)$ $\cos{\varphi}$, $\cos{\gamma}$, $\cos{\beta}$ are pairwise distinct, which means that the equation $\det{(T_{2p}(a-g(\varphi)))}=0$ is equivalent to the equation:
	\begin{equation}
	\begin{aligned}
	\begin{vmatrix}
	V_p(\cos{\varphi})&V_p(\cos{\beta}) & V_p(\cos{\gamma})\\
	V_{p+1}(\cos{\varphi}) & V_{p+1}(\cos{\beta}) & V_{p+1}(\cos{\gamma})\\
	V_{p+2}(\cos{\varphi})&V_{p+2}(\cos{\beta}) & V_{p+2}(\cos{\gamma})
	\end{vmatrix}\\
	\times
	\begin{vmatrix}
	W_p(\cos{\varphi})&W_p(\cos{\beta}) & W_p(\cos{\gamma})\\
	W_{p+1}(\cos{\varphi}) & W_{p+1}(\cos{\beta}) & W_{p+1}(\cos{\gamma})\\
	W_{p+2}(\cos{\varphi})&W_{p+2}(\cos{\beta}) & W_{p+2}(\cos{\gamma})
	\end{vmatrix}=0
	\end{aligned}
	\label{uravnenie dlya T2p}
	\end{equation}
	Taking into account the properties (\ref{Svoistva pol Cheb}), the equation (\ref{uravnenie dlya T2p}) will take the form:
	\begin{equation}
	\begin{aligned}
	\begin{vmatrix}
	\dfrac{\cos{((p+\frac{1}{2})\varphi)}}{\cos{\frac{\varphi}{2}}}&\dfrac{\cos{((p+\frac{1}{2})\beta)}}{\cos{\frac{\beta}{2}}} & \dfrac{\cos{((p+\frac{1}{2})\gamma)}}{\cos{\frac{\gamma}{2}}}\\
	\dfrac{\cos{((p+\frac{3}{2})\varphi)}}{\cos{\frac{\varphi}{2}}} &\dfrac{\cos{((p+\frac{3}{2})\beta)}}{\cos{\frac{\beta}{2}}} & \dfrac{\cos{((p+\frac{3}{2})\gamma)}}{\cos{\frac{\gamma}{2}}}\\
	\dfrac{\cos{((p+\frac{5}{2})\varphi)}}{\cos{\frac{\varphi}{2}}}&\dfrac{\cos{((p+\frac{5}{2})\beta)}}{\cos{\frac{\beta}{2}}} & \dfrac{\cos{((p+\frac{5}{2})\gamma)}}{\cos{\frac{\gamma}{2}}}
	\end{vmatrix}\\
	\times
	\begin{vmatrix}
	\dfrac{\sin{((p+\frac{1}{2})\varphi)}}{\sin{\frac{\varphi}{2}}}&\dfrac{\sin{((p+\frac{1}{2})\beta)}}{\sin{\frac{\beta}{2}}} & \dfrac{\sin{((p+\frac{1}{2})\gamma)}}{\sin{\frac{\gamma}{2}}}\\
	\dfrac{\sin{((p+\frac{3}{2})\varphi)}}{\sin{\frac{\varphi}{2}}} &\dfrac{\sin{((p+\frac{3}{2})\beta)}}{\sin{\frac{\beta}{2}}} & \dfrac{\sin{((p+\frac{3}{2})\gamma)}}{\sin{\frac{\gamma}{2}}}\\
	\dfrac{\sin{((p+\frac{5}{2})\varphi)}}{\sin{\frac{\varphi}{2}}}&\dfrac{\sin{((p+\frac{5}{2})\beta)}}{\sin{\frac{\beta}{2}}} & \dfrac{\sin{((p+\frac{5}{2})\gamma)}}{\sin{\frac{\gamma}{2}}}
	\end{vmatrix}=0
	\end{aligned}
	\label{uravnenie2 dlya T2p}
	\end{equation}
	It is not difficult to check that $\sin(\varphi)\neq 0$, $\sin(\beta)\neq 0$ and $\sin(\gamma)\neq 0$ if $ \varphi \in (0,\pi)$. Then, since $n=2p$, the set of solutions to the equation (\ref{uravnenie2 dlya T2p}) coincides with the union of the sets of solutions to the equations:
	\begin{equation}
	\begin{vmatrix}
	\cos{(\frac{n+1}{2}\varphi)}&\cos{(\frac{n+1}{2}\beta)} &\cos{(\frac{n+1}{2}\gamma)}\\
	\cos{(\frac{n+3}{2}\varphi)}&\cos{(\frac{n+3}{2}\beta)} &\cos{(\frac{n+3}{2}\gamma)}\\
	\cos{(\frac{n+5}{2}\varphi)}&\cos{(\frac{n+5}{2}\beta)} &\cos{(\frac{n+5}{2}\gamma)}
	\end{vmatrix}=0\\
	\label{Pervoe uravnenie dlya T2p}
	\end{equation}
	and
	\begin{equation}
	\begin{vmatrix}
	\sin{(\frac{n+1}{2}\varphi)}&\sin{(\frac{n+1}{2}\beta)} &\sin{(\frac{n+1}{2}\gamma)}\\
	\sin{(\frac{n+3}{2}\varphi)}&\sin{(\frac{n+3}{2}\beta)} &\sin{(\frac{n+3}{2}\gamma)}\\
	\sin{(\frac{n+5}{2}\varphi)}&\sin{(\frac{n+5}{2}\beta)} &\sin{(\frac{n+5}{2}\gamma)}
	\end{vmatrix}=0\\
	\label{vtoroe uravnenie dlya T2p}
	\end{equation}
	If $n=2p+1$, similar reasoning will lead to the same equations (\ref{Pervoe uravnenie dlya T2p}) and (\ref{vtoroe uravnenie dlya T2p}).
	Expanding the determinant in the formula (\ref{Pervoe uravnenie dlya T2p}) by the first column, we obtain the following equation:
	\begin{equation}
	\cos{\left(\frac{n+1}{2}\varphi \right)}S_1-\cos{\left(\frac{n+3}{2}\varphi \right)}S_2+\cos{\left(\frac{n+5}{2}\varphi \right)}S_3=0,
	\label{uravnenie posle rask det}
	\end{equation}
	where $$S_1=\cos{\left(\frac{n+3}{2}\beta \right)}\cos{\left(\frac{n+5}{2}\gamma \right)}-\cos{\left(\frac{n+5}{2}\beta \right)}\cos{\left(\frac{n+3}{2}\gamma \right)},$$\\
	$$S_2=\cos{\left(\frac{n+1}{2}\beta \right)}\cos{\left(\frac{n+5}{2}\gamma \right)}-\cos{\left(\frac{n+5}{2}\beta \right)}\cos{\left(\frac{n+1}{2}\gamma \right)},$$\\
	$$S_3=\cos{\left(\frac{n+1}{2}\beta \right)}\cos{\left(\frac{n+3}{2}\gamma \right)}-\cos{\left(\frac{n+3}{2}\beta \right)}\cos{\left(\frac{n+1}{2}\gamma \right)}.$$\\
	Consider the sum of the first and third terms, using the fact that:
	$$\cos{\left(\frac{n+1}{2}\varphi \right)}=\cos{\left(\frac{n+3}{2}\varphi-\varphi \right)}=\cos{\left(\frac{n+3}{2}\varphi \right)}\cos{\left(\varphi\right)}+\sin{\left(\frac{n+3}{2}\varphi \right)}\sin{\left(\varphi\right)}$$
	and
	$$\cos{\left(\frac{n+5}{2}\varphi \right)}=\cos{\left(\frac{n+3}{2}\varphi+\varphi \right)}=\cos{\left(\frac{n+3}{2}\varphi \right)}\cos{\left(\varphi\right)}-\sin{\left(\frac{n+3}{2}\varphi \right)}\sin{\left(\varphi\right)}$$
	then
	\begin{equation}
	\begin{aligned}
	\cos{\left(\frac{n+1}{2}\varphi \right)}S_1&+\cos{\left(\frac{n+5}{2}\varphi \right)}S_3\\
	=\cos{\left(\frac{n+3}{2}\varphi \right)}\cos{\left(\varphi\right)}(S_1+S_3)&+\sin{\left(\frac{n+3}{2}\varphi \right)}\sin{\left(\varphi\right)}(S_1-S_3)
	\end{aligned}
	\label{cosS_1+cosS_3}
	\end{equation}

	Let's find $S_1+S_3$ by grouping the first term of $S_1$ with the second of $S_3$ and vice versa:
	$$
	\begin{aligned}
	S_1+S_3&=\cos{\left(\frac{n+3}{2}\beta \right)}\left(\cos{\left(\frac{n+5}{2}\gamma\right)}-\cos{\left(\frac{n+1}{2}\gamma \right)} \right)\\
	&-\cos{\left(\frac{n+3}{2}\gamma \right)}\left(\cos{\left(\frac{n+5}{2}\beta\right)}-\cos{\left(\frac{n+1}{2}\beta \right)} \right)
	\end{aligned}
	$$
	Using the cosine difference formula, it is not difficult to obtain
\begin{equation}
	S_1+S_3=2\cos{\left(\frac{n+3}{2}\gamma \right)}\sin{\left(\frac{n+3}{2}\beta \right)}\sin{\left(\beta \right)}-2\cos{\left(\frac{n+3}{2}\beta \right)}\sin{\left(\frac{n+3}{2}\gamma \right)}\sin{\left(\gamma \right)}
	\label{S_1+S_3}
\end{equation}
	From similar reasoning we get:
	$$
	S_1-S_3=2\cos{\left(\frac{n+3}{2}\gamma \right)}\cos{\left(\frac{n+3}{2}\beta \right)}\cos{\left(\gamma \right)}-2\cos{\left(\frac{n+3}{2}\beta \right)}\cos{\left(\frac{n+3}{2}\gamma \right)}\cos{\left(\beta \right)}
	$$
	If we take the total multiplier out of the brackets:
	\begin{equation}
		S_1-S_3=2\cos{\left(\frac{n+3}{2}\gamma \right)}\cos{\left(\frac{n+3}{2}\beta \right)}(\cos{\left(\gamma \right)}-\cos{\left(\beta \right)})
		\label{S_1-S_3}
	\end{equation}
	Consider $S_2$ by writing it as:
	 $$S_2=\cos{\left(\frac{n+3}{2}\beta-\beta \right)}\cos{\left(\frac{n+3}{2}\gamma+\gamma \right)}-\cos{\left(\frac{n+3}{2}\beta+\beta \right)}\cos{\left(\frac{n+3}{2}\gamma-\gamma \right)}$$	
	Using the formula of the cosine of the sum, expanding the brackets and giving similar terms, we get the following:	
\begin{equation}
	S_2=2\cos{\left(\frac{n+3}{2}\gamma \right)}\sin{\left(\frac{n+3}{2}\beta \right)}\cos{\left(\gamma \right)}\sin{\left(\beta \right)}-2\cos{\left(\frac{n+3}{2}\beta \right)}\sin{\left(\frac{n+3}{2}\gamma \right)}\cos{\left(\beta \right)}\sin{\left(\gamma \right)}
	\label{cosS_2}
\end{equation}
	Substituting the equalities (\ref{cosS_1+cosS_3})-(\ref{cosS_2}) into the equation (\ref{uravnenie posle rask det}), we get the following equation:
\begin{equation}
	\begin{aligned}
	2\cos{\left(\frac{n+3}{2}\varphi \right)}\cos{\left(\frac{n+3}{2}\gamma \right)}\sin{\left(\frac{n+3}{2}\beta \right)}\sin{\left(\beta \right)}(\cos{\left(\varphi \right)}-\cos{\left(\gamma \right)})\\
	-	2\cos{\left(\frac{n+3}{2}\varphi \right)}\cos{\left(\frac{n+3}{2}\beta \right)}\sin{\left(\frac{n+3}{2}\gamma \right)}\sin{\left(\gamma \right)}(\cos{\left(\varphi \right)}-\cos{\left(\beta \right)})\\
	=2\sin{\left(\frac{n+3}{2}\varphi \right)}\sin{\left(\varphi\right)}\cos{\left(\frac{n+3}{2}\gamma \right)}\cos{\left(\frac{n+3}{2}\beta \right)}(\cos{\left(\beta \right)}-\cos{\left(\gamma \right)})
	\end{aligned}
	\label{uravnenie 111}
\end{equation}
	It is not difficult to make sure that
	$$
	\begin{aligned}
	&\cos{\left(\beta \right)}-\cos{\left(\gamma \right)}=(\cos{\left(\varphi \right)}-1)\sqrt{3}i\\
	&\cos{\left(\varphi \right)}-\cos{\left(\beta \right)}=(\cos{\left(\varphi \right)}-1)\left(\dfrac{3}{2} -\dfrac{\sqrt{3}i}{2}\right)\\
	&\cos{\left(\varphi \right)}-\cos{\left(\gamma \right)}=(\cos{\left(\varphi \right)}-1)\left(\dfrac{3}{2} +\dfrac{\sqrt{3}i}{2}\right)\\
	\end{aligned}
	$$
	So:
	\begin{equation}
	\begin{aligned}
	&\dfrac{\cos{\left(\varphi \right)}-\cos{\left(\beta \right)}}{\cos{\left(\beta \right)}-\cos{\left(\gamma \right)}}=-\dfrac{1}{2}-\dfrac{\sqrt{3}i}{2}=-e^{\frac{\pi i}{3}}\\
	&\dfrac{\cos{\left(\varphi \right)}-\cos{\left(\gamma \right)}}{\cos{\left(\beta \right)}-\cos{\left(\gamma \right)}}=\dfrac{1}{2}-\dfrac{\sqrt{3}i}{2}=-e^{\frac{2\pi i}{3}}
	\end{aligned}
	\label{delenie}
	\end{equation}
	Dividing the right and left sides of the equation (\ref{uravnenie 111}) by $\cos{\left(\gamma \right)}-\cos{\left(\beta \right)}$, using the equalities (\ref{delenie}), and also taking into account that $\sin{\left (\beta \right)}=B_se^{i\psi_s} $ and $\sin{\left(\gamma \right)}=B_se^{-i\psi_s} $ we get the equation:
	\begin{equation}
	\begin{aligned}
	&2\cos{\left(\frac{n+3}{2}\varphi \right)}\cos{\left(\frac{n+3}{2}\beta \right)}\sin{\left(\frac{n+3}{2}\gamma \right)}B_se^{-i\psi_s+\frac{\pi i}{3}} \\
	&-2\cos{\left(\frac{n+3}{2}\varphi \right)}\cos{\left(\frac{n+3}{2}\gamma \right)}\sin{\left(\frac{n+3}{2}\beta \right)}B_se^{i\psi_s+\frac{2\pi i}{3}}\\
	&=2\sin{\left(\frac{n+3}{2}\varphi \right)}\sin{\left(\varphi\right)}\cos{\left(\frac{n+3}{2}\gamma \right)}\cos{\left(\frac{n+3}{2}\beta \right)}
	\end{aligned}
	\label{uravnenie 222}
	\end{equation}
	Given that $ \beta=c+ib$, $\gamma=c-ib$, we get
\begin{equation}
\begin{aligned}
	2\cos{\left(\frac{n+3}{2}\beta \right)}\sin{\left(\frac{n+3}{2}\gamma \right)}=\sin{((n+3)c)}-i\sinh{((n+3)b)}\\
	2\cos{\left(\frac{n+3}{2}\gamma \right)}\sin{\left(\frac{n+3}{2}\beta \right)}=\sin{((n+3)c)}+i\sinh{((n+3)b)}\\
	2\cos{\left(\frac{n+3}{2}\beta \right)}\cos{\left(\frac{n+3}{2}\gamma \right)}=\cos{((n+3)c)}+\cosh{((n+3)b)}
\end{aligned}
	\label{Perehod k a i b}
\end{equation}
Denote by $S_5$ the left side of the equation (\ref{uravnenie 222}) without the multiplier $ \cos{\left(\frac{n+3}{2}\varphi \right)}$, then taking into account (\ref{Perehod k a i b}) we get:
$$
\begin{aligned}
S_5&=(\sin{((n+3)c)}-i\sinh{((n+3)b)})B_s\left(\cos{\left(-\psi_s+\frac{\pi}{3}\right)}+i\sin{\left(-\psi_s+\frac{\pi}{3}\right)}\right)\\
&-(\sin{((n+3)c)}+i\sinh{((n+3)b)})B_s\left(\cos{\left(\psi_s+\frac{2 \pi}{3}\right)}+i\sin{\left(\psi_s+2\frac{\pi}{3}\right)}\right)
\end{aligned}
$$
Which is equivalent to:
\begin{equation}
\begin{aligned}
S_5&=\sin{((n+3)c)}B_s\left(\cos{\left(-\psi_s+\frac{\pi}{3}\right)}- \cos{\left(\psi_s+\frac{2 \pi}{3}\right)}\right)\\
&-i\sinh{((n+3)b)}B_s\left(\cos{\left(-\psi_s+\frac{\pi}{3}\right)}+ \cos{\left(\psi_s+\frac{2 \pi}{3}\right)}\right)\\
&+i\sin{((n+3)c)}B_s\left(\sin{\left(-\psi_s+\frac{\pi}{3}\right)}- \sin{\left(\psi_s+\frac{2 \pi}{3}\right)}\right)\\
&+\sinh{((n+3)b)}B_s\left(\sin{\left(-\psi_s+\frac{\pi}{3}\right)}+ \sin{\left(\psi_s+\frac{2 \pi}{3}\right)}\right)
\end{aligned}
\label{uravnenie 333}
\end{equation}
It is easy to make sure that:
\begin{equation}
\begin{aligned}
&B_s\left(\cos{\left(-\psi_s+\frac{\pi}{3}\right)}- \cos{\left(\psi_s+\frac{2 \pi}{3}\right)}\right)=2B\\
&B_s\left(\cos{\left(-\psi_s+\frac{\pi}{3}\right)}+ \cos{\left(\psi_s+\frac{2 \pi}{3}\right)}\right)=0\\
&B_s\left(\sin{\left(-\psi_s+\frac{\pi}{3}\right)}- \sin{\left(\psi_s+\frac{2 \pi}{3}\right)}\right)=0\\
&B_s\left(\sin{\left(-\psi_s+\frac{\pi}{3}\right)}+ \sin{\left(\psi_s+\frac{2 \pi}{3}\right)}\right)=2C.
\end{aligned}
\label{nahod B i C}
\end{equation}
Then, taking into account (\ref{uravnenie 333}), (\ref{nahod B i C}) and \ref{Perehod k a i b}, the equation (\ref{uravnenie 222}) will take the form
$$
\begin{aligned}
\cos{\left(\frac{n+3}{2}\varphi \right)}(2B\sin{((n+3)c)}+2C\sinh{((n+3)b)})\\
=\sin{\left(\frac{n+3}{2}\varphi \right)}\sin{\left(\varphi\right)}(\cos{((n+3)c)}+\cosh{((n+3)b)})
\end{aligned}
$$
Since $\sin{(\varphi)} \neq 0$, $b>0$ and hence $\cos{((n+3) c)}+\cos{((n+3)b)}>0$ when $\varphi \in (0, \pi)$, then the equation can be rewritten as
\begin{equation}
\sin{\left(\frac{n+3}{2}\varphi \right)}=\cos{\left(\frac{n+3}{2}\varphi \right)}2\dfrac{B\sin{((n+3)c)}+C\sinh{((n+3)b)})}{\sin{\left(\varphi\right)}(\cos{((n+3)c)}+\cosh{((n+3)b)})}
\label{uravnenie 444}
\end{equation}
Since $\sin{\left(\frac{n+3}{2}\varphi\right)}$ and $\cos{\left(\frac{n+3}{2}\varphi\right)}$ cannot be equal to zero at the same time, then the equation (\ref{uravnenie 444}) and therefore the equation (\ref{Pervoe uravnenie dlya T2p}) is equivalent to the equation (\ref{Glavnoe ur 1}). Doing similar calculations for the equation \ref{vtoroe uravnenie dlya T2p}, you can get that it is equivalent to the equation \ref{Glavnoe ur 2}.
\end{proof}
\begin{lm}
	\begin{enumerate}
		\item If $\varphi \in (\frac{\pi}{n+3},\pi) $ then
		\begin{equation}
		\frac{2}{n+3} \frac{f'(\varphi)}{1+f^2(\varphi)} < 0.8
		\label{333}
		\end{equation}
		\item If $\varphi \in (\frac{2 \pi}{n+3},\pi) $ then
		\begin{equation}
		\frac{2}{n+3} \frac{h'(\varphi)}{1+h^2(\varphi)} < 0.8
		\label{2333}
		\end{equation}
	\end{enumerate}
	\label{lemma ob ogran proizv}
\end{lm}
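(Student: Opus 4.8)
The plan is to read this bound as a contraction estimate. The quantity $\tfrac{2}{n+3}\tfrac{f'(\varphi)}{1+f^2(\varphi)}$ is exactly the derivative of the fixed-point map $\varphi\mapsto\tfrac{2}{n+3}[\pi j+\arctan f(\varphi)]$ from \eqref{sl ur 1}, so controlling it by $0.8<1$ is what forces that map to be a contraction and underlies the iteration estimates \eqref{Rekurent formula 1}--\eqref{Rekurent formula 2} of Theorem \ref{theorem 2}. Accordingly I would first differentiate $f$ by the quotient rule, writing $\Theta_c:=(n+3)c(\varphi)$ and $\Theta_b:=(n+3)b(\varphi)$, and split the result into a \emph{fast} part, where the derivative falls on the arguments $\Theta_c,\Theta_b$ and hence carries an overall factor $(n+3)$, and a \emph{slow} part, where it falls on the coefficients $B,C$ and on $\sin\varphi$.

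The second step is to discard the slow part. For $\varphi$ bounded away from $0$ it is plainly $O(1)$ while $1+f^2\ge 1$, so its contribution is $O(1/n)$. The only delicate point is the boundary layer $\varphi\sim 1/(n+3)$, where the two pieces of the slow part each look like $O(1/\varphi)=O(n)$; but using the expansions \ref{B v 0}, \ref{C v 0} together with $B'(0)=1$ one checks that these leading singular pieces cancel, leaving an $O(1)$ remainder and again an $O(1/n)$ contribution. After this reduction the whole problem is to bound the fast part, and a direct simplification (combining $\cos^2+\sin^2=1$ and $\cosh^2-\sinh^2=1$) collapses it to
\[
Q:=\frac{4\sin\varphi\,\mathcal N}{\sin^2\varphi\,\mathcal D^2+4N_0^2},\qquad \mathcal D=\cos\Theta_c+\cosh\Theta_b,\quad N_0=B\sin\Theta_c+C\sinh\Theta_b,
\]
where $\mathcal N=(Bc'+Cb')(1+\cos\Theta_c\cosh\Theta_b)+(Cc'-Bb')\sin\Theta_c\sinh\Theta_b$; the factor $1+f^2$ is precisely what supplies the $\sin^2\varphi\,\mathcal D^2+4N_0^2$ in the denominator and keeps $Q$ bounded where $f$ is large.

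The third step splits on the size of $\Theta_b$. Since $b$ is increasing with $b'(0)=\tfrac{\sqrt3}{2}$ (items \ref{b}, \ref{b'}), for $\varphi$ bounded below $\Theta_b$ grows linearly in $n$, so $\cosh\Theta_b,\sinh\Theta_b$ dominate and one reads off $Q=O(\sin\varphi\,e^{-\Theta_b})\to 0$; there the bound is trivial. In the boundary layer I would put $x:=(n+3)\varphi$ and insert \ref{a v 0}, \ref{b v 0}, \ref{B v 0}, \ref{C v 0} (so $c\approx\tfrac12\varphi$, $b\approx\tfrac{\sqrt3}{2}\varphi$, $B\approx\varphi$, $C=O(\varphi^3)$), which reduces $Q$ to a single universal profile, independent of $n$ and $j$,
\[
F(x)=\frac{2[\,1+\cos\tfrac{x}{2}\cosh\tfrac{\sqrt3 x}{2}-\sqrt3\,\sin\tfrac{x}{2}\sinh\tfrac{\sqrt3 x}{2}\,]}{(\cos\tfrac{x}{2}+\cosh\tfrac{\sqrt3 x}{2})^2+4\sin^2\tfrac{x}{2}}.
\]
One checks $F(0)=1$ and, using $\cosh\Theta_b\ge\sinh\Theta_b\to\infty$, that $F$ is dominated for growing $x$ by a decreasing envelope of size $\approx 4/\cosh(\tfrac{\sqrt3 x}{2})$; combined with direct evaluation near the endpoint this gives $\sup_{x\ge\pi}F<0.8$ (the actual supremum being around $0.4$), which is exactly why the interval in part~1 begins at $\pi/(n+3)$. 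The analysis of $h$ is identical after $\cos\Theta_c\to-\cos\Theta_c$ and $C\to-C$; the corresponding profile has denominator $-\cos\tfrac{x}{2}+\cosh\tfrac{\sqrt3 x}{2}\sim x^2/2$ near $0$, and the threshold at which it drops below $0.8$ is $x=2\pi$, accounting for the interval $(2\pi/(n+3),\pi)$ in part~2.

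The main obstacle I anticipate is not the profile bound itself, which has comfortable room, but making the passage from $Q$ to $F((n+3)\varphi)$ \emph{uniform} in $n$ and seamless across the transition between boundary layer and bulk. Concretely, one must bound the error of replacing $c,b,B,C,c',b'$ by their leading Taylor terms by a quantity that is $o(1)$ uniformly on $\pi/(n+3)\le\varphi\le\delta$, while simultaneously guaranteeing the exponential smallness in the overlap $\varphi\sim\delta$, so that the two regime-estimates patch together and yield the single bound $0.8$ for all $\varphi$ in the stated range. This is where the monotonicity statements of Section~\ref{Vspomogatelnie rezultati} (items \ref{a'}, \ref{b'}, \ref{B_s/sin}) do the real work, converting pointwise asymptotics into the required uniform control.
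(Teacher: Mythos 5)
Your decomposition into a fast part (derivative falling on the arguments $(n+3)c$, $(n+3)b$) and a slow part (derivative falling on $B$, $C$, $\sin\varphi$) is exactly the paper's splitting of $f'$ into $I_2+I_3$ versus $I_1+I_4$, and your collapsed expression for the fast part is the paper's formula \eqref{I_2+I_3/n+3}, obtained from the same identities $Bc'+Cb'=\tfrac12\sin\varphi$ and $Cc'-Bb'=-\tfrac{\sqrt3}{2}\sin\varphi$. Where you genuinely differ is in organization: the paper runs three explicit numerical cases ($\varphi\in(\tfrac{\pi}{n+3},\tfrac{2\pi}{n+3})$, $[\tfrac{2\pi}{n+3},\tfrac{\pi}{2}]$, $(\tfrac{\pi}{2},\pi)$) and bounds each $I_k$ separately by constants that sum to less than $0.8$, while you exploit the cancellation of the $1/\varphi$ singularities between the two slow terms and compress the whole boundary layer into the single profile $F\bigl((n+3)\varphi\bigr)$. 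That is a cleaner way to see why the threshold sits at $x=\pi$ (respectively $x=2\pi$ for $h$), your numerics for $F$ are right ($F(\pi)\approx-0.39$, with $|F|$ then decaying roughly like $4/\cosh(\tfrac{\sqrt3 x}{2})$), and the uniformity issue you flag at the end is real but surmountable, essentially by the monotonicity facts of Section \ref{Vspomogatelnie rezultati} as you say.

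There is, however, one genuine gap: in your second step you dispose of the slow part by ``numerator $O(1)$ over $1+f^2\ge1$'' and declare $\varphi\sim 1/(n+3)$ to be the only delicate point. This fails at the other endpoint. The slow term produced by differentiating $1/\sin\varphi$ is $-\tfrac{\cos\varphi}{\sin^2\varphi}\cdot\tfrac{2(B\sin\Theta_c+C\sinh\Theta_b)}{\cos\Theta_c+\cosh\Theta_b}$, and since $C(\pi)=B_s(\pi)\sin\bigl(\tfrac{\pi}{3}-\psi_s(\pi)\bigr)>0$ its numerator does not vanish as $\varphi\to\pi$; the term blows up like $(\pi-\varphi)^{-2}$ and already exceeds $0.8$ in modulus once $\pi-\varphi\lesssim n^{-1/2}$, so it is not $O(1)$ on the stated interval and the crude bound $1+f^2\ge1$ cannot absorb it. (The other slow term is harmless there because $B'$ and $C'$ each carry a factor $\sin\varphi$.) The repair is exactly the paper's Case 3: near $\varphi=\pi$ one has $f^2=4N_0^2/(\sin^2\varphi\,\mathcal D^2)$ blowing up at the same rate, so one must divide this term by $f^2$ rather than by $1$, which turns it into $|\cos\varphi|\,\mathcal D/(4|N_0|)$; this is bounded because $|N_0|\ge C\sinh\Theta_b$ with $C$ bounded below and $b>b(\tfrac{\pi}{2})>\tfrac{\pi}{4}$ there. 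With that repair your argument closes; without it, the claimed estimate for the slow part is false on part of the interval $(\tfrac{\pi}{n+3},\pi)$.
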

\begin{proof}
	\begin{equation}
	\frac{2}{n+3}f'(\varphi)=\frac{4}{n+3}(I_1+I_2+I_3+I_4),
	\label{f' ravno}
	\end{equation}
	where
	$$
	\begin{aligned}
	&I_1=\dfrac{B'\sin{((n+3)c)}+C'\sinh{((n+3)b)}}{\sin(\varphi)(\cos{((n+3)c)}+\cosh{((n+3)b)})}\\
	&I_2=\frac{(n+3)(B c'\cos{((n+3)c)}+C b'\cosh{((n+3)b)})}{\sin(\varphi)(\cos{((n+3)c)}+\cosh{((n+3)b)})}\\
	&I_3=\frac{(n+3)(B \sin{((n+3)c)}+C \sinh{((n+3)b)})(c'\sin{((n+3)c)}- b'\sinh{((n+3)b)})}{\sin(\varphi)(\cos{((n+3)c)}+\cosh{((n+3)b)})^2}\\
	&I_4=\frac{(B\sin{((n+3)c)}+C\sinh{((n+3)b)})\cos(\varphi)}{\sin^2(\varphi)(\cos{((n+3)c)}+\cosh{((n+3)b)})}.
	\end{aligned}
	$$
	Consider the sum $ \frac {4 (I_2 + I_3)} {n + 3} $, reduce to a common denominator, expand the brackets and give similar summands.
	$$
	\label{I_2+I_4}
	\begin{aligned}
	&\frac{4(I_2+I_3)}{n+3}= \frac{4(B c'\cos{((n+3)c)}+C b'\cosh{((n+3)b)})(\cos{((n+3)c)}+\cosh{((n+3)b)})}{\sin(\varphi)(\cos{((n+3)c)}+\cosh{((n+3)b)})^2}\\
	&+\frac{4(B \sin{((n+3)c)}+C \sinh{((n+3)b)})(c'\sin{((n+3)c)}- b'\sinh{((n+3)b)})}{\sin(\varphi)(\cos{((n+3)c)}+\cosh{((n+3)b)})^2}\\
	&=\frac{4(B a'\cos^2{((n+3)c)}+\cos{((n+3)c)}\cosh{((n+3)b)}(Bc'+Cb'))+Cb'\cosh^2{((n+3)b)})}{\sin(\varphi)(\cos{((n+3)c)}+\cosh{((n+3)b)})^2}\\
	&+\frac{4(B a'\sin^2{((n+3)c)}+\sin{((n+3)c)}\sinh{((n+3)b)}(Cc'-Bb'))-Cb'\sinh^2{((n+3)b)})}{\sin(\varphi)(\cos{((n+3)c)}+\cosh{((n+3)b)})^2}\\
	&=\frac{4(B c'+Cb' +\cos{((n+3)c)}\cosh{((n+3)b)}(Bc'+Cb'))+\sin{((n+3)c)}\sinh{((n+3)b)}(Cc'-Bb'))}{\sin(\varphi)(\cos{((n+3)c)}+\cosh{((n+3)b)})^2}
	\end{aligned}
	$$
	We will find $Ba'+Cb'$ and  $Ca'-Bb'$. Taking into account the formula (\ref{beta'}) and that $c'=\Re{(\beta')}$, $b'=\Im{(\beta')}$
	\begin{multline}
		Bc'+Cb'=\sin(\varphi)\left[\cos{\left(\frac{2 \pi}{3}-\psi_s\right)}\cos{\left(\psi_s-\frac{\pi}{3}\right)}-\sin{\left(\frac{2 \pi}{3}-\psi_s\right)}\sin{\left(\psi_s-\frac{\pi}{3}\right)}\right]=\sin(\varphi)\cos{\left(\frac{\pi}{3}\right)}
		\\=\frac{1}{2}\sin{(\varphi)}.
	\end{multline}\\
	Similarly $$Cc'-Bb'=-\frac{\sqrt{3}}{2}\sin{(\varphi)}.$$
	Then:\\
\begin{equation}
	\dfrac{4(I_2+I_3)}{n+3}=\dfrac{4(\frac{1}{2}+\frac{1}{2}\cos{((n+3)c)}\cosh{((n+3)b)}-\frac{\sqrt{3}}{2}\sin{((n+3)c)}\sinh{((n+3)b)})} {(\cos{((n+3)c)}+\cosh{((n+3)b)})^2}
	\label{I_2+I_3/n+3}
\end{equation}
	It is not difficult to check that
	$$B'(\varphi)= \Re{(\cot{(\beta)e^{\frac{\pi i}{3}}})}\sin{(\varphi)}$$  $$C'(\varphi)=- \Im{(\cot{(\beta)e^{\frac{\pi i}{3}}})}\sin{(\varphi)}.$$
	Then:
\begin{equation}
	I_1=\dfrac{|\cot{(\beta)}|[\cos{(\frac{\pi}{3}+\psi_c-\psi_s)}\sin{((n+3)c)}-\sin{(\frac{\pi}{3}+\psi_c-\psi_s)}\sinh{((n+3)b)}]}{(\cos{((n+3)c)}+\cosh{((n+3)b)})}.
	\label{I_1}
\end{equation}
	\begin{equation}
	I_4=\frac{|\sin{(\beta)}|\cos(\varphi)[\cos{(\psi_s-\frac{\pi}{3})}\sin{((n+3)c)}-\sin{(\psi_s-\frac{\pi}{3})}\sinh{((n+3)b)}]}{\sin^2{(\varphi)}(\cos{((n+3)c)}+\cosh{((n+3)b)})}
	\end{equation}
	Next, consider three cases $\varphi \in (\frac{\pi}{n+3}, \frac{2\pi}{n+3})$, $\varphi \in [\frac{2\pi}{n+3}, \frac{\pi}{2})$ and $\varphi \in [\frac{\pi}{2}, \pi)$.\\
	{\it Case 1.} $\varphi \in (\frac{\pi}{n+3}, \frac{2\pi}{n+3})$\\
	First, let's evaluate the value from below $\cosh{((n+3)b)}$. In (\ref{b}) of the section \ref{Vspomogatelnie rezultati}, it was shown that $b(\varphi)$ is an increasing function, so  $\cosh{((n+3)b)}<\cosh{((n+3)b(\frac{\pi}{n+3}))}$. From (\ref{b v 0}) of the section \ref{Vspomogatelnie rezultati} it follows that
	$$
	(n+3)b\left(\frac{\pi}{n+3}\right)=(n+3)\frac{\sqrt{3}}{2}\frac{\pi}{n+3}+o\left(\frac{1}{n+3}\right)=\frac{\sqrt{3}\pi}{2}+o\left(\frac{1}{n+3}\right)
	$$
	Then if $n$ is sufficiently large:
\begin{equation}
	(n+3)b\left(\frac{\pi}{n+3}\right)>2.72
	\label{ocenka b v pervom sluchae}
\end{equation}
	Where do we get the lower bound for  $\cosh{((n+3)b)}$
	\begin{equation}
	\cosh{((n+3)b)}>\cosh{(2.72)}>7.62
	\label{ocenka cosh v pervom sluchae}
	\end{equation}
	Consider $\frac{4(I_2+I_3)}{n+3}$, add and subtract in the numerator the summand $\frac{\sqrt{3}}{2}\sin{((n+3) c)}\cosh{((n+3)b)}$, then we get the following
	\begin{equation}
	\begin{aligned}
	\frac{4(I_2+I_3)}{n+3}&=\dfrac{4(\frac{1}{2}\cos{((n+3)c)}\cosh{((n+3)b)}-\frac{\sqrt{3}}{2}\sin{((n+3)c)}\cosh{((n+3)b)})}{(\cos{((n+3)c)}+\cosh{((n+3)b)})^2}\\
	&+\dfrac{4(\frac{1}{2}+\frac{\sqrt{3}}{2}\sin{((n+3)c)}(\cosh{((n+3)b)}-\sinh{((n+3)b)}))}{(\cos{((n+3)c)}+\cosh{((n+3)b)})^2}\\
	&=\dfrac{4(\cosh{((n+3)b)}\cos{((n+3)c+\frac{\pi}{3})}}{(\cos{((n+3)c)}+\cosh{((n+3)b)})^2}+\dfrac{4(\frac{1}{2}+\frac{\sqrt{3}}{2}\sin{((n+3)c)}e^{-(n+3)b})}{(\cos{((n+3)c)}+\cosh{((n+3)b)})^2}\\
	\end{aligned}
	\end{equation}
	Let's introduce the notation
	$$
	L_1=\dfrac{4(\cosh{((n+3)b)}\cos{((n+3)c+\frac{\pi}{3})}}{(\cos{((n+3)c)}+\cosh{((n+3)b)})^2},
	$$
	$$
	L_2=\dfrac{4(\frac{1}{2}+\frac{\sqrt{3}}{2}\sin{((n+3)c)}e^{-(n+3)b})}{(\cos{((n+3)c)}+\cosh{((n+3)b)})^2}
	$$
	From (\ref{a v 0}) of the section \ref{Vspomogatelnie rezultati} it follows that when $\varphi \in (\frac{\pi}{n+3}, \frac{2\pi}{n+3})$ $c(n+3)+\frac{\pi}{3}\in (\frac{5 \pi}{6}+o(\frac{1}{n}),\frac{8 \pi}{6}+o(\frac{1}{n}))$, this means that for a sufficiently large $n$ $\cos{((n+3)c+\frac{\pi}{3})}<0$, so $L_1<0$.\\
	Let's  estimate $|L_1|$
	$$
	|L_1|\leq \dfrac{4(\cosh{((n+3)b)}}{(\cos{((n+3)c)}+\cosh{((n+3)b)})^2}\leq\dfrac{4(\cosh{((n+3)b)}}{(-1+\cosh{((n+3)b)})^2}
	$$
	Taking into account (\ref{ocenka cosh v pervom sluchae}):
\begin{equation}
|L_1|<\dfrac{4\cdot 7.6}{(-1+7.6)^2}<0.7
\label{ocenka L_1}
\end{equation}
Let's  estimate $L_2$. From (\ref{a/phi}) and (\ref{a v 0}) it follows that $c<\frac{\varphi}{2}$, and since we consider the case when $ \varphi \in (\frac{\pi}{n+3}, \frac{2\pi}{n+3})$, then $c(n+3)<\pi$, which means $\sin{((n+3)c)}>0$, so both terms in $L_2$ greater than zero. It is not difficult to understand that with the growth of $b$, $L_2$ also grows, then taking into account (\ref{ocenka b v pervom sluchae}) and (\ref{ocenka cosh v pervom sluchae})
\begin{equation}
L_2<\dfrac{4(\frac{1}{2}+\frac{\sqrt{3}}{2}e^{-2.72})}{(-1+7.6)^2}<0.06
\label{ocenka L_2}
\end{equation}
Then taking into account (\ref{ocenka L_1}) and (\ref{ocenka L_2}):
\begin{equation}
-0.7<\frac{4(I_2+I_3)}{n+3}<0.06
\label{ocenka I_2+I_3 v pervom sluch}
\end{equation}
Let's  estimate $\frac{4I_1}{n+3}$. From (\ref{B_s}) and (\ref{B_c}) and the formulas (\ref{B_s'}) and (\ref{B_c'}) of the section \ref{Vspomogatelnie rezultati}, it is not difficult to get that in the neighborhood of the point $ \varphi = 0$ $B_c=1+o(\varphi)$, $B_s=\varphi + o(\varphi)$, then
\begin{equation}
|\cot{\beta}|=\frac{B_c}{B_s}=\frac{1}{\varphi} +o(1)
\label{cot v pervom sluch}
\end{equation}
 In (\ref{B_c/B_s}) of the section \ref{Vspomogatelnie rezultati}, it was shown that $| \cot {\beta}|$ is a decreasing function, so for a sufficiently large $n$ we have the following formula:
 \begin{equation}
 \frac{4|\cot{\beta}|}{n+3}<\frac{4}{n+3}\left(\frac{n+3}{\pi}+o(1)\right)<1.28
 \label{ocenka cor v pervom sluch}
 \end{equation}
From (\ref{psi_s}) and (\ref{psi_c}) of the section \ref{Vspomogatelnie rezultati}, and the formulas (\ref{psi_s'}) and (\ref{psi_c'}), it follows that near the point $ \varphi=0$ $\sin{(\frac{\pi}{3}+\psi_c-\psi_s)}=o(1)$. Since $(n+3)b<(n+3)\frac{\sqrt{3}\varphi}{2}< \sqrt{3} \pi$, then $ \sinh{((n+3)b)})$ is bounded. From which it follows that for a sufficiently large $n$:
 \begin{equation}
 |\sin{\left(\frac{\pi}{3}+\psi_c-\psi_s\right)}\sinh{((n+3)b)}|<0.01
 \label{ocenka sin v I_1 sluchae}
 \end{equation}
 Then substituting inequalities (\ref{ocenka cosh v pervom sluchae}), (\ref{ocenka cor v pervom sluch}) and (\ref{ocenka sin v I_1 sluchae}) into the formula (\ref{I_1}) we get that
 $$
 \frac{4I_1}{n+3}<\frac{1.28(1+0.01)}{-1+7.6}<0.2
 $$
It was shown above that in the case under consideration $\sin{((n+3)c)}>0$, and hence $$\cos{\left(\frac{\pi}{3}+\psi_c-\psi_s\right)}\sin{((n+3)c)}>0.$$ Taking into account \eqref{ocenka sin v I_1 sluchae} we get
 $$
 -0.01<\frac{4I_1}{n+3}
 $$
As a result
\begin{equation}
-0.01<\frac{4I_1}{n+3}<0.2
\label{ocenka I_1 v pervom sluchae}
\end{equation}
Let's  estimate  $\frac{4I_4}{n+3}$. Using the similar reasoning as in the estimation of $\frac{4 I_1}{n+3}$, it is easy to get that, near the point $ \varphi=0$:
$$
\frac{|\sin{(\beta)}|}{\sin{(\varphi)}}=1+o(1)<1.01
$$
$$
\frac{4}{n+3}\frac{1}{\sin{(\varphi)}}<\frac{4}{\pi}<1.28
$$
And also that
$$\sin{(\psi_s-\frac{\pi}{3})}=o(1)$$
Then
$$
\frac{4I_4}{n+3}<\frac{1.01*1.28(1+0.01)}{-1+7.6}<0.2
$$
And also
$$-0.01<\frac{4I_4}{n+3}$$
As a result
\begin{equation}
-0.01<\frac{4I_4}{n+3}<0.2
\label{ocenka I_4 v pervom sluchae}
\end{equation}
From the formula (\ref{f' ravno}) and the inequalities (\ref{ocenka I_2+I_3 v pervom sluch}), (\ref{ocenka I_1 v pervom sluchae}) and (\ref{ocenka I_4 v pervom sluchae})
we get that when $\varphi \in (\frac{\pi}{n+3}, \frac{2\pi}{n+3})$
$$\left|\frac{2}{n+3}f'\right|< 0.8$$
Which means
$$\left|\frac{2}{n+3}\frac{f'}{f^2+1}\right|< 0.8$$
{\it Case 2.} $\varphi \in [\frac{2 \pi}{n+3}, \frac{\pi}{2}]$. Using the similar reasoning as in the first case, it is not difficult to show that for a sufficiently large $n$
\begin{equation}
	\cosh{((n+3)b)}<115.
	\label{cosh vo vtorom sluch}
\end{equation}
Let's  estimate $\frac{4(I_2+I_3)}{n+3}$.
$$
\begin{aligned}
	\left|\dfrac{4(I_2+I_3)}{n+3}\right|&=	\left|\dfrac{4(\frac{1}{2}+\frac{1}{2}\cos{((n+3)c)}\cosh{((n+3)b)}-\frac{\sqrt{3}}{2}\sin{((n+3)c)}\sinh{((n+3)b)})} {(\cos{((n+3)c)}+\cosh{((n+3)b)})^2}\right|\\
&\leq\dfrac{4(\frac{1}{2}+|\frac{1}{2}\cos{((n+3)c)}\cosh{((n+3)b)}|+|\frac{\sqrt{3}}{2}\sin{((n+3)c)}\sinh{((n+3)b)})|} {(\cos{((n+3)c)}+\cosh{((n+3)b)})^2}\\
&< \dfrac{4(\frac{1}{2}+|\frac{1}{2}\cos{((n+3)c)}\cosh{((n+3)b)}|+|\frac{\sqrt{3}}{2}\sin{((n+3)c)}\cosh{((n+3)b)})|} {(\cos{((n+3)c)}+\cosh{((n+3)b)})^2}\\
&= \dfrac{4(\frac{1}{2}+\cosh{((n+3)b)}(|\frac{1}{2}\cos{((n+3)c)}|+|\frac{\sqrt{3}}{2}\sin{((n+3)c)}|))} {(\cos{((n+3)c)}+\cosh{((n+3)b)})^2}\\
&\leq \dfrac{4(\frac{1}{2}+\cosh{((n+3)b)})} {(\cos{((n+3)c)}+\cosh{((n+3)b)})^2}<\dfrac{4(\frac{1}{2}+115)} {(-1+115)^2}<0.04
\end{aligned}
$$
So
\begin{equation}
\left|\dfrac{4(I_2+I_3)}{n+3}\right|<0.04.
\label{I_2+I_3 vo vtorom sluch}
\end{equation}
Let's estimate $\frac{4I_1}{n+3}$. Similarly, as in the first case, it is not difficult to get that for a sufficiently large $n$
$$
 \frac{4|\cot{\beta}|}{n+3}<0.64
$$
And also taking into account (\ref{psi_s}) and (\ref{psi_c}) of the section \ref{Vspomogatelnie rezultati}, we get that
$$
\left|\sin{\left(\frac{\pi}{3}+\psi_c-\psi_s\right)}\right|<\sin{\left(\frac{\pi}{4}\right)}=\frac{1}{\sqrt{2}}
$$
Then
$$
\begin{aligned}
\left|\frac{4I_1}{n+3}\right|&=\frac{4}{n+3} \dfrac{|\cot{(\beta)}||(\cos{(\frac{\pi}{3}+\psi_c-\psi_s)}\sin{((n+3)c)}-\sin{(\frac{\pi}{3}+\psi_c-\psi_s)}\sinh{((n+3)b)})|}{(\cos{((n+3)c)}+\cosh{((n+3)b)})}\\
&\leq \frac{4}{n+3}\dfrac{|\cot{(\beta)}|(|\cos{(\frac{\pi}{3}+\psi_c-\psi_s)}\sin{((n+3)c)}|+|\sin{(\frac{\pi}{3}+\psi_c-\psi_s)}\cosh{((n+3)b)}|)}{(\cos{((n+3)c)}+\cosh{((n+3)b)})}\\
&< \dfrac{0.64(1+\frac{115}{\sqrt{2}})}{(-1+115)}<0.47
\end{aligned}
$$
So
\begin{equation}
\left|\frac{4I_1}{n+3}\right|<0.47
\label{I_1 vo vtorom sluch}
\end{equation}
Let's estimate $\frac{4I_4}{n+3}$. From \ref{B_s/sin} of the section \ref{Vspomogatelnie rezultati} it follows that:
$$
\frac{B_s}{\sin{(\varphi)}}\leq\frac{B_s(\frac{\pi}{2})}{\sin{(\frac{\pi}{2})}}<1.63
$$
And also taking into account \ref{psi_s} of the section \ref{Vspomogatelnie rezultati}
$$
\left|\sin{\left(\frac{\pi}{3}-\psi_s\right)}\right|\leq\sin{\left(\frac{\pi}{12}\right)}<0.26
$$
It is also obvious that
$$
\frac{4}{(n+3)}{\sin{(\varphi)}}\leq \frac{4}{(n+3)}\frac{(n+3)}{2\pi}<0.64
$$
then
$$
\begin{aligned}
\left|\frac{4I_4}{n+3}\right|&=\frac{4}{n+3}\frac{\cos(\varphi)|\sin{(\beta)}||(\cos{(\psi_s-\frac{\pi}{3})}\sin{((n+3)c)}-\sin{(\psi_s-\frac{\pi}{3})}\sinh{((n+3)b)})|}{\sin^2{(\varphi)}(\cos{((n+3)c)}+\cosh{((n+3)b)})}\\
&\leq \frac{4}{n+3}\frac{|\sin{(\beta)}|(|\cos{(\psi_s-\frac{\pi}{3})}\sin{((n+3)c)}|+|\sin{(\psi_s-\frac{\pi}{3})}\cosh{((n+3)b)}|)}{\sin^2{(\varphi)}(\cos{((n+3)c)}+\cosh{((n+3)b)})}\\
&<1.63*0.64\frac{1+0.26*115}{-1+115}<0.29
\end{aligned}
$$
So
\begin{equation}
\left|\frac{4I_4}{n+3}\right|<0.29
\label{I_4 vo vtorom sluch}
\end{equation}
Then, taking into account (\ref{I_2+I_3 vo vtorom sluch}), (\ref {I_1 vo vtorom sluch}), and (\ref {I_4 vo vtorom sluch}), we get that for $ \varphi \in (\frac {2 \pi}{n + 3}, \frac {\pi}{2})$
$$
\left|\dfrac{2f'}{n+3}\right|=\frac{4}{n+3}|I_1+I_2+I_3+I_4|\leq \frac{4}{n+3}(|I_1|+|I_2+I_3|+|I_4|)<0.8
$$
Which means
$$\left|\frac{4}{n+3}\frac{f'}{f^2+1}\right|<0.8$$
{\it Case 3.} $\varphi \in (\frac{\pi}{2}, \pi)$. In this case, the estimates (\ref{I_2+I_3 vo vtorom sluch}) and (\ref{I_1 vo vtorom sluch}) remain true, which means that the inequalities are met:
\begin{equation}
\left|\frac{4}{(n+3)}\frac{I_1}{1+f^2}\right|\leq\left|\frac{4I_1}{(n+3)}\right|<0.47
\label{I_1 v trtem sluch}
\end{equation}
\begin{equation}
\left|\frac{4}{(n+3)}\frac{I_2+I_3}{1+f^2}\right|\leq \left|\frac{4(I_2+I_3)}{(n+3)}\right|<0.04
\label{I_2+I_3 v trtem sluch}
\end{equation}
Let's estimate $\frac{4I_4}{(n+3)f^2}$.
$$
\begin{aligned}
\left|\frac{4I_4}{(n+3)f^2}\right|&=\left|\dfrac{4 \cos{(\varphi)}(\cos{((n+3)c)}+\cosh{((n+3)b)})}{(n+3)(B \sin{((n+3)c)}+C \sinh{((n+3)b)})}\right|
\end{aligned}
$$
Since $B=B_s\cos{(\frac{\pi}{3}-\psi_s)}$, $C=-B_s\sin{(\frac{\pi}{3}-\psi_s)}$ and $B_s$ is increases, then
$$B<B_s(\pi)<4,$$ and also $$C>B_s\left(\frac{\pi}{2}\right)\sin{\left(\frac{\pi}{3}-\psi_s\left(\frac{\pi}{2}\right)\right)}>0.15$$
From \ref{b} and \ref{b/phi} of the section \ref{Vspomogatelnie rezultati} it follows that
$$
b>b\left(\frac{\pi}{2}\right)>\frac{\pi}{4}
$$
Then
\begin{equation}
\left|\frac{4I_4}{(n+3)f^2}\right|<\frac{4}{(n+3)}\left|\frac{1+\cosh{(\frac{(n+3)\pi}{4})}}{(-4+0.15\sinh{(\frac{(n+3)\pi}{4})})}\right|
\label{vvv}
\end{equation}
Obviously, as $n$ increases, the right-hand side of the inequality (\ref{vvv}) decreases. So when $n>100$
$$\left|\frac{4I_4}{(n+3)f^2}\right|<\frac{4}{(103)}\left|\frac{1+\cosh{(\frac{103\pi}{4})}}{(-4+0.15\sinh{(\frac{103\pi}{4})})}\right|<0.26$$
So
\begin{equation}
\left|\frac{4I_4}{(n+3)(f^2+1)}\right|<\left|\frac{4I_4}{(n+3)f^2}\right|<0.26
\label{I_4 v trtem sluch}
\end{equation}
Then, taking into account (\ref{I_1 v trtem sluch}), (\ref{I_2+I_3 v trtem sluch}) and (\ref{I_4 v trtem sluch}), we get
$$
\begin{aligned}
\left|\dfrac{2f'}{(n+3)(1+f^2)}\right|&=\frac{4}{n+3}\dfrac{|I_1+I_2+I_3+I_4|}{(1+f^2)}\leq \frac{4|I_1|}{(n+3)(1+f^2)}+\frac{4|I_2+I_3|}{(n+3)(1+f^2)}+\frac{4|I_4|}{(n+3)(1+f^2)}\\
&<0.04+0.47+0.26=0.77<0.8
\end{aligned}
$$
The inequality (\ref{2333}) is proved in the same way as in the cases 2 and 3 above.
\end{proof}
Let's introduce two new functions:
$$F(\varphi,n):=\frac{n+3}{2}\varphi- \arctan{(f(\varphi))}$$
$$G(\varphi,n):=\frac{n+3}{2}\varphi- \frac{\pi}{2}+\arctan{(h(\varphi))}$$
\begin{proof}[Proof of the theorem \ref{theorem 2}]
	$$
	F'(\varphi, n)=\frac{n+3}{2} - \frac{f'(\varphi)}{1+f^2(\varphi)}.
	$$
	From the lemma \ref{lemma ob ogran proizv} it follows that the recurrent formula (\ref{Rekurent formula 1}) converges. If $j=1,2,\dots , \left[ \frac{n+1}{2}\right]$ then
	$F(\frac{\pi}{n+3},n)=\frac{\pi}{2}-\arctan{(f(\frac{\pi}{n+3}))}<\pi j$ , $F(\pi, n)=\frac{(n+3)\pi}{2}-\arctan{(f(\pi))}>\pi j$. Which means that the equation
	$F(\varphi,n)=\pi j$, has exactly one root on the interval $(\frac{\pi}{n+3}, \pi)$, for each $j=1,2,\dots , \left[ \frac{n+1}{2}\right]$. From which follows the statement of the first part of the theorem.\\
	$$
	G'(\varphi, n)=\frac{n+3}{2} - \frac{h'(\varphi)}{1+h^2(\varphi)}.
	$$
	From the lemma \ref{lemma ob ogran proizv} it follows that the recurrent formula (\ref{Rekurent formula 2}) converges. And also that $G'(\varphi, n)>0$ for $\varphi \in (\frac{2\pi}{n+3}, \pi)$. Then if  $j=1,2,\dots , \left[ \frac{n}{2}\right]$ then $G(\frac{2\pi}{n+3},n)=\frac{\pi}{2}+\arctan{(h(\frac{\pi}{n+3}))}<\pi j$, $G(\pi,n)=\frac{(n+2)\pi}{2}+\arctan{(h(\pi))}>\pi j$. It means that the equation
	$G(\varphi,n)=\pi j$, has exactly one root on the interval  $(\frac{\pi}{n+3}, \pi)$, for each $j=1,2,\dots , \left[ \frac{n}{2}\right]$. From which follows the statement of the second part of the theorem.
\end{proof}
\begin{proof}[Proof of the lemma \ref{lemma2}]
We show that all the roots of the equation are different. To do this it is sufficient to show that the following inequalities hold for all roots $\varphi_{2j-1}<\varphi_{2j}$ and $\varphi_{2j}<\varphi_{2j+1}$. We prove the first inequality. Suppose this is incorrect, then
$$
\frac{2}{n+3}\left[\pi j +\arctan{(f(\varphi_{2j-1}))}\right]\geq \frac{2}{n+3}\left[\pi j+\frac{\pi}{2} -\arctan{(h(\varphi_{2j}))}\right]
$$
which is the same as
\begin{equation}
	\arctan{(f(\varphi_{2j-1}))} \geq \frac{\pi}{2} -\arctan{(h(\varphi_{2j}))}
	\label{nnn}
\end{equation}
Since $\arctan x\in [-\frac{\pi}{2},\frac{\pi}{2}]$ then to perform the inequality [\ref{nnn}], it is necessary that
$$
h(\varphi_{2j})\geq 0
$$
which is equivalent to
$$
B(\varphi_{2j})\sin{((n+3)a(\varphi_{2j}))}\geq C
(\varphi_{2j})\sinh{((n+3)b(\varphi_{2j}))}
$$
From \ref{b/phi} it follows, that $\frac{b(\varphi)}{\varphi}\geq \frac{b(\pi)}{\pi}>\frac{1}{2}$, which means $b(\varphi_{2j})> \frac{\varphi_{2j}}{2}$, then we get the inequality
$$
B(\varphi_{2j})>C(\varphi_{2j})\sinh{\left(\frac{n+3}{2} \varphi_{2j}\right)}
$$
which is equivalent to
$$
\dfrac{B(\varphi_{2j})}{C(\varphi_{2j})}>\sinh{\left(\frac{n+3}{2} \varphi_{2j}\right)}
$$
Since
$$
\dfrac{B(\varphi_{2j})}{C(\varphi_{2j})}=-\dfrac{\Re{(\sin{(\beta)}e^{\frac{-\pi i}{3}})}}{\Im{(\sin{(\beta)}e^{\frac{-\pi i}{3}})}}=-\dfrac{B_s\Re{(e^{\frac{-\pi i}{3}+i\psi_s})}}{B_s\Im{(e^{\frac{-\pi i}{3}+i\psi_s})}}=\cot{\left(\frac{\pi}{3}-\psi_s\right)}
$$
we get the inequality
\begin{equation}
	\cot{\left(\frac{\pi}{3}-\psi_s\right)}>\sinh{\left(\frac{n+3}{2} \varphi_{2j}\right)}
	\label{eee}
\end{equation}
From \ref{psi_s} it follows that the left side of the equation (\ref{eee}) decreases, and the right side increases. Also note that the left part does not depend on $n$, and the right part grows with increasing $n$, respectively, with a sufficiently large $n$ $\varphi_{2j}$ and therefore $\varphi_{2j-1}$ (since $\varphi_{2j-1}-\varphi_{2j}<\dfrac{\pi}{n+3}$) can be made arbitrarily small. Let
\begin{equation}
	\varphi_{2j}<\varphi_{2j-1}<\dfrac{\pi}{12}.
	\label{mnb}
\end{equation}
In fact, this will be true already at $n>7$.
Taking into account (\ref{mnb}), find the upper estimate for $|f(\varphi_{2j-1})|$ and $|h(\varphi_{2j})|$.
$$
\dfrac{B(\varphi)}{\sin{(\varphi)}}=\dfrac{B_s\cos{\left(\frac{\pi}{3}-\psi_s\right)}}{\sin{(\varphi)}}
$$
Then according to \ref{B_s/sin} we have
\begin{equation}
	\dfrac{B(\varphi_{2j})}{\sin{(\varphi_{2j})}}= 	\dfrac{B_s(\varphi_{2j})\cos{\left(\frac{\pi}{3}-\psi_s\right)}}{\sin{(\varphi_{2j})}}\leq \dfrac{B_s(\varphi_{2j})}{\sin{(\varphi_{2j})}}<\dfrac{B_s(\frac{\pi}{12})}{\sin{(\frac{\pi}{12})}}<1.1
	\label{ocB}
\end{equation}
$$
\dfrac{C(\varphi)}{\sin{(\varphi)}}=\dfrac{B_s\sin{\left(\frac{\pi}{3}-\psi_s\right)}}{\sin{(\varphi)}}
$$
Then according to \ref{B_s/sin} and \ref{psi_s} we have
\begin{equation}
	\dfrac{C(\varphi_{2j})}{\sin{(\varphi_{2j})}}=	\dfrac{C_s(\varphi_{2j})\sin{\left(\frac{\pi}{3}-\psi_s\right)}}{\sin{(\varphi_{2j})}}<\dfrac{B_s(\frac{\pi}{12})\sin{\left(\frac{\pi}{3}-\psi_s(\frac{\pi}{12})\right)}}{\sin{(\frac{\pi}{12})}}<0.01
	\label{ocC}
\end{equation}
Then
$$
\begin{aligned}
	|h(\varphi_{2j})|<& 2\left|\frac{B(\varphi_{2j})\sin{((n+3)c(\varphi_{2j}))}-C(\varphi_{2j})\sinh{((n+3)b(\varphi_{2j}))}}{\sin{(\varphi_{2j})}(-\cos{((n+3)c(\varphi_{2j}))}+\cosh{((n+3)b(\varphi_{2j}))})}\right|\\
	<&\dfrac{2.2}{-1+\cosh{((n+3)b(\varphi_{2j}))}}+\dfrac{0.02\cosh{((n+3)b(\varphi_{2j}))}}{-1+\cosh{((n+3)b(\varphi_{2j}))}}
\end{aligned}
$$
Using the estimate (\ref{ocenka cosh v pervom sluchae}), we get that
\begin{equation}
	|h(\varphi_{2j})|<\dfrac{2.2}{6.6}+\dfrac{0.02*7.6}{6.6}<0.5<\dfrac{1}{\sqrt{3}}
\end{equation}
From similar reasoning, we get that
\begin{equation}
	|f(\varphi_{2j-1})|<\dfrac{2.2}{6.6}+\dfrac{0.02*7.6}{6.6}<0.5<\dfrac{1}{\sqrt{3}}
\end{equation}
Which contradicts the inequality (\ref{nnn}), which means $ \varphi_{2 j-1}<\varphi_{2 j}$. Similarly, it is shown that $\varphi_{2j}<\varphi_{2j+1}$.
\end{proof}
\begin{proof}[Proof of the theorem \ref{Teorema sluch 1}]
	Denote by $j_m$ the smallest $j$ for which the inequality $\frac{1}{2}e^{\pi(j-1)}>q^2$ is satisfied. Let  $d_{1,j}=\frac{\pi j}{q}$. Since $2b> \varphi$, as shown in the \ref{b/phi} of section \ref{Vspomogatelnie rezultati}, then $$\cosh{(2qb)}>\frac{1}{2}e^{d_{1,j}-\frac{u}{q}}>\frac{1}{2}e^{\pi(j_m-1)}>q^2.$$
	In this case
	 $$\frac{\sin{(2qc)}}{\cosh{(2qb)}}=O\left(\frac{1}{q^2}\right),$$
	 $$\frac{\cos{(2qc)}}{\cosh{(2qb)}}=O\left(\frac{1}{q^2} \right),$$
	  and
	  $$\tanh({(2qb)})=1+O\left(\frac{1}{q^2}\right).$$
	  Then the equation (\ref{Glavnoe ur U}) can be rewritten as:
	\begin{equation}
	u=\arctan\left({2\frac{C(d_{1,j}+\frac{u}{q})+O(\frac{1}{q^2})}{\sin{(d_{1,j}+\frac{u}{q})}(1+O(\frac{1}{q^2}))}}\right)
	\label{ur1}
	\end{equation}
	Let's consider two cases. {\it Case 1.} $d_{1,j}<(1-\varepsilon)\pi$, where $\varepsilon$ small positive number. Then, assuming $u=u_1+\frac{u_2}{q}$, the equation (\ref{ur1}) takes the form:
	$$
	u_1+\frac{u_2}{q}=\arctan\left({2\frac{C(d_{1,j}+\frac{u_1}{q}+O(\frac{1}{q^2}))+O(\frac{1}{q^2})}{\sin{(d_{1,j}+\frac{u_1}{q}+O(\frac{1}{q^2}))}+O(\frac{1}{q^2})}}\right)
	$$
\begin{multline*}
	\arctan\left(2\frac{C(d_{1,j}+\frac{u_1}{q}+O(\frac{1}{q^2}))+O(\frac{1}{q^2})}{\sin{(d_{1,j}+\frac{u_1}{q}+O(\frac{1}{q^2}))}+O(\frac{1}{q^2})}\right)\\=
   \arctan\left(2\frac{C(d_{1,j})}{\sin{(d_{1,j})}}+2\frac{C'(d_{1,j})\sin{(d_{1,j})}-C(d_{1,j})\cos{(d_{1,j})}}{\sin^2{(d_{1,j})}}\frac{u_1}{q}+O\left(\frac{1}{q^2}\right)\right)	\\
	= \arctan{\left(2\frac{C(d_{1,j})}{\sin{(d_{1,j})}} \right)}+2\frac{\sin^2{(d_{1,j})}}{\sin^2{(d_{1,j})}+4C^2(d_{1,j})}\frac{C'(d_{1,j})\sin{(d_{1,j})}-C(d_{1,j})\cos{(d_{1,j})}}{\sin^2{(d_{1,j})}}\frac{u_1}{q}+O\left(\frac{1}{q^2}\right)
\end{multline*}
	In this case, assuming $$u_1^{\star}=\arctan{\left(2\frac{C(d_{1,j})}{\sin{(d_{1,j})}} \right)},$$ and
	\\$$u_2^{\star}=2\frac{C'(d_{1,j})\sin{(d_{1,j})}-C(d_{1,j})\cos{(d_{1,j})}}{\sin^2{(d_{1,j})}+4C^2(d_{1,j})}\arctan{\left(2\frac{C(d_{1,j})}{\sin{(d_{1,j})}}\right)},$$
	we get that $|u_1-u_1^{\star}|=O(\frac{1}{q^2})$ and $|u_2-u_2^{\star}|=O(\frac{1}{q})$\\
	{\it Case 2.}$(1-\varepsilon)\pi \leq d_{1,j}<\pi$. Since $C(d_{1,j}+\frac{u}{q})>0$ and $\sin{(d_{1,j}+\frac{u}{q})}>0$, then the equation (\ref{ur1}) can be rewritten as:
	$$
	u_1+\frac{u_2}{q}=\arccot{\left(\frac{1}{2}\frac{\sin{(d_{1,j}+\frac{u_1}{q}+O(\frac{1}{q^2}))}+O(\frac{1}{q^2})}{C(d_{1,j}+\frac{u_1}{q}+O(\frac{1}{q^2}))+O(\frac{1}{q^2})}\right)}
	$$
	then similarly to the first case:\\
	\begin{multline*}
	\arccot{\left(\frac{1}{2}\frac{\sin{(d_{1,j}+\frac{u_1}{q}+O(\frac{1}{q^2}))}+O(\frac{1}{q^2})}{C(d_{1,j}+\frac{u_1}{q}+O(\frac{1}{q^2}))+O(\frac{1}{q^2})}\right)}\\
	=
	\arccot{\left(\frac{1}{2}\frac{\sin{(d_{1,j})}}{C(d_{1,j})}+\frac{1}{2}\frac{C'(d_{1,j})\cos{(d_{1,j})}-C(d_{1,j})\sin{(d_{1,j})}}{C^2(d_{1,j})}\frac{u_1}{q}+O\left(\frac{1}{q^2}\right)\right)}\\
	=
	\arccot{\left(\frac{1}{2}\frac{\sin{(d_{1,j})}}{C(d_{1,j})}\right)}-\frac{1}{2}\frac{C^2(d_{1,j})}{4C^2(d_{1,j})+\sin^2{(d_{1,j})}}\frac{C'(d_{1,j})\cos{(d_{1,j})}-C(d_{1,j})\sin{(d_{1,j})}}{C^2(d_{1,j})}\frac{u_1}{q}+O\left(\frac{1}{q^2}\right).
	\end{multline*}
	As a result we get the same result as in the first case.\\
	The second part of the theorem is proved in a similar way.
\end{proof}
\begin{lm}
	\label{lemma o shod 2}
	\begin{enumerate}
		\item  If $d_{1,j}: \:\frac{1}{2}e^{\pi(j-1)}\leq q^2$ then, for a sufficiently large $n$, the equation
		$$u_1=\arctan{\left(Z_1^{(1)}\right)}$$
		has a unique solution on the interval $(-\frac{\pi}{2},\frac{\pi}{2})$.
		\item If $d_{2,j}: \:\frac{1}{2}e^{\pi(j-1)}\leq q^2$ then, for a sufficiently large $n$, the equation
		$$w_1=-\arctan{\left(Z_1^{(2)}\right)}$$
		has a unique solution on the interval $(-\frac{\pi}{2},\frac{\pi}{2})$.
	\end{enumerate}
	\begin{proof}
		Considering that $(P_a^{(1)})'=1$, $(P_b^{(1)})'=\sqrt{3}$
		$$
		\begin{aligned}
		(Z^{(1)})'=2\dfrac{((1+\frac{3}{16}d_{1,j}^2)\cos{(P_a^{(1)})}+\frac{3}{16}d_{1,j}^2\cosh{(P_b^{(1)})})(\cos{(P_a^{(1)})}+\cosh{(P_b^{(1)})})}{(\cos{(P_a^{(1)})}+\cosh{(P_b^{(1)})})^2}\\
		+2\frac{((1+\frac{3}{16}d_{1,j}^2)\sin{(P_a^{(1)})}+\frac{\sqrt{3}}{16}d_{1,j}^2\sinh{(P_b^{(1)})})(-\sin{(P_a^{(1)})}+\sqrt{3}\sinh{(P_b^{(1)})}}{(\cos{(P_a^{(1)})}+\cosh{(P_b^{(1)})})^2}
		\end{aligned}
		$$
	Expending the brackets and giving similar terms we get:
		$$
		(Z^{(1)})'=2\dfrac{(1+\frac{3}{8}d_{1,j}^2)+(1+\frac{3}{8}d_{1,j}^2)\cos{(P_a^{(1)})}\cosh{(P_b^{(1)})}-(1+\frac{\sqrt{3}}{8}d_{1,j}^2)\sinh{(P_b^{(1)})}\sin{(P_a^{(1)})} }{(\cos{(P_a^{(1)})}+\cosh{(P_b^{(1)})})^2}\\
		$$
From the condition of the lemma, it follows that for a sufficiently large $n$ $d_{1,j}=\frac{\pi j}{q}<0.1$,  then $\frac{\sqrt{3}}{8}d_{1,j}^2<\frac{3}{8}d_{1,j}^2<0.01$, this means that the following inequality is true:
	\begin{equation}
		(Z^{(1)})'<2\dfrac{1.01+1.01\cosh{(P_b^{(1)})}+1.01\sinh{(P_b^{(1)})} }{(\cos{(P_a^{(1)})}+\cosh{(P_b^{(1)})})^2}<\dfrac{2.02(1+2\cosh{(P_b^{(1)})}) }{(-1+\cosh{(P_b^{(1)})})^2}
		\label{000}
	\end{equation}
	Let's estimate $\cosh{(P_b^{(1)})}$.
	$$
	P_b^{(1)}=\sqrt{3}(\pi j - u_1)\leq \sqrt{3} (\pi -\frac{\pi}{2}) =\frac{ \sqrt{3}\pi}{2}
	$$
	Then
	$$
	\cosh{(P_b^{(1)})}<\cosh(\frac{ \sqrt{3}\pi}{2})<7.6
	$$
	Obviously, the right-hand side of the inequality (\ref{000}) decreases with the growth of $ \cosh{(P_b^{(1)})}$, then
$$
	(Z^{(1)})'<\dfrac{2.02(1+15.2)}{6.6^2}<0.76
$$
		From this, first, it follows that the recurrent formula $u_1^{(k+1)}=\arctan{(Z_1^{(1)}(u_1^{(k+1})))}$ converges. Second, that the function $H(u_1)=u_1-\arctan{(Z_1^{(1)})}$ increases. Since $H(-\frac{\pi}{2})<0$, $H(\frac{\pi}{2})>0$ then  the root exists, and given that $H(u_1)$ is increasing function then this root is the only one.
		The second part of the lemma is proved similarly.
	\end{proof}
\end{lm}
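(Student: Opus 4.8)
The plan is to recast each equation as a fixed-point problem and to prove that the associated map is a contraction of $[-\frac{\pi}{2},\frac{\pi}{2}]$ into itself; existence and uniqueness of the fixed point, together with the geometric convergence of the iteration used elsewhere, then follow from the Banach fixed-point theorem. I treat part (1) in detail, part (2) being identical up to the sign of the numerator of $Z_1^{(2)}$ and of one term in its denominator. Put $\Phi(u_1):=\arctan\bigl(Z_1^{(1)}(u_1)\bigr)$. Since $B_1^{(1)},C_1^{(1)},d_{1,j}$ do not depend on $u_1$ and the denominator $\cos P_a^{(1)}+\cosh P_b^{(1)}$ is strictly positive (indeed $P_b^{(1)}>0$, so $\cosh P_b^{(1)}>1\ge-\cos P_a^{(1)}$), $Z_1^{(1)}$ is a real continuous function of $u_1$; and as $\arctan$ maps into $(-\frac\pi2,\frac\pi2)$, the map $\Phi$ sends $[-\frac\pi2,\frac\pi2]$ into itself, so every solution of $u_1=\Phi(u_1)$ automatically lies in the asserted interval.

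Everything reduces to the bound $|\Phi'(u_1)|<1$. Because $|\Phi'|=\frac{|(Z_1^{(1)})'|}{1+(Z_1^{(1)})^2}\le|(Z_1^{(1)})'|$, it suffices to bound $|(Z_1^{(1)})'|$ by a constant below $1$. I would differentiate using $(P_a^{(1)})'=1$, $(P_b^{(1)})'=\sqrt3$; after the quotient rule and the identities $\cos^2+\sin^2=1$, $\cosh^2-\sinh^2=1$, the numerator collapses, with heavy cancellation, to
$$\bigl(B_1^{(1)}+\sqrt3\,C_1^{(1)}\bigr)\bigl(1+\cos P_a^{(1)}\cosh P_b^{(1)}\bigr)+\bigl(C_1^{(1)}-\sqrt3\,B_1^{(1)}\bigr)\sin P_a^{(1)}\sinh P_b^{(1)}$$
over $(\cos P_a^{(1)}+\cosh P_b^{(1)})^2$, where $B_1^{(1)}+\sqrt3 C_1^{(1)}=1+\tfrac38 d_{1,j}^2$ and $C_1^{(1)}-\sqrt3 B_1^{(1)}=-\sqrt3\bigl(1+\tfrac18 d_{1,j}^2\bigr)$.

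Now I would feed in the hypothesis. The condition $\tfrac12 e^{\pi(j-1)}\le q^2$ forces $j=O(\log q)$, hence $d_{1,j}=\pi j/q\to0$; in particular $d_{1,j}<0.1$ for large $n$, so the two coefficients are within $O(d_{1,j}^2)$ of $1$ and $-\sqrt3$. Simultaneously $j\ge1$ and $u_1>-\frac\pi2$ give $P_b^{(1)}=\sqrt3(\pi j+u_1)\ge\tfrac{\sqrt3\pi}{2}$, so $\cosh P_b^{(1)}\ge\cosh\bigl(\tfrac{\sqrt3\pi}{2}\bigr)>7.6$. The one subtlety is that the two oscillatory cross-terms must be bounded \emph{jointly}: since $\cos^2 P_a^{(1)}+\sin^2 P_a^{(1)}=1$, a phase-amplitude estimate gives
$$\bigl|(B_1^{(1)}+\sqrt3 C_1^{(1)})\cos P_a^{(1)}\cosh P_b^{(1)}+(C_1^{(1)}-\sqrt3 B_1^{(1)})\sin P_a^{(1)}\sinh P_b^{(1)}\bigr|\le\sqrt{(1.01)^2\cosh^2 P_b^{(1)}+(1.74)^2\sinh^2 P_b^{(1)}},$$
which, combined with $\cos P_a^{(1)}+\cosh P_b^{(1)}\ge\cosh P_b^{(1)}-1$, yields $|(Z_1^{(1)})'|\le\frac{2\,(1.01+\sqrt{4.05\cosh^2 P_b^{(1)}-3.03})}{(\cosh P_b^{(1)}-1)^2}$. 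This majorant is decreasing in $\cosh P_b^{(1)}$, so its largest value occurs at the lower bound $\cosh P_b^{(1)}=7.6$, where it is below $0.75$; hence $|(Z_1^{(1)})'|<0.75<1$ uniformly in $j,n$.

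With the contraction constant in hand the conclusion is immediate: $\Phi$ has a unique fixed point in $[-\frac\pi2,\frac\pi2]$, equivalently $H(u_1):=u_1-\arctan\bigl(Z_1^{(1)}\bigr)$ satisfies $H'=1-\Phi'>0$, is strictly increasing with $H(-\tfrac\pi2)<0<H(\tfrac\pi2)$, and so has exactly one zero; moreover $u_1^{(k+1)}=\arctan\bigl(Z_1^{(1)}(u_1^{(k)})\bigr)$ converges geometrically. Part (2) is identical, replacing $Z_1^{(1)},P_a^{(1)},P_b^{(1)}$ by $Z_1^{(2)},P_a^{(2)},P_b^{(2)}$. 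I expect the main obstacle to be precisely the estimate in the third paragraph: the derivative expansion is cancellation-heavy, and the naive triangle inequality on the two cross-terms is too lossy near $u_1\to-\frac\pi2$, so one must exploit that $\cos P_a^{(1)}$ and $\sin P_a^{(1)}$ cannot be simultaneously extremal, together with the lower bound on $\cosh P_b^{(1)}$ supplied by $j\ge1$.
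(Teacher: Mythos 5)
Your proposal is correct and follows the same overall strategy as the paper: differentiate $Z_1^{(1)}$ with respect to $u_1$, collapse the numerator via $\cos^2+\sin^2=1$ and $\cosh^2-\sinh^2=1$, bound the derivative by a constant below $1$ using $d_{1,j}<0.1$ and the lower bound $\cosh(P_b^{(1)})>\cosh(\sqrt{3}\pi/2)>7.6$ coming from $j\ge 1$, and conclude by monotonicity of $H(u_1)=u_1-\arctan(Z_1^{(1)})$ together with the intermediate value theorem. The one substantive difference lies in the key estimate, and there your version is the more careful one. The paper records the coefficient of $\sin(P_a^{(1)})\sinh(P_b^{(1)})$ as $-(1+\frac{\sqrt{3}}{8}d_{1,j}^2)$, whereas the correct value is $C_1^{(1)}-\sqrt{3}B_1^{(1)}=-\sqrt{3}(1+\frac{1}{8}d_{1,j}^2)$, as you compute; a factor $\sqrt{3}$ has been lost on the leading term. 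With the corrected coefficient ($\approx 1.74$ instead of $\approx 1.01$) the paper's term-by-term triangle inequality gives, at the extremal value $\cosh(P_b^{(1)})\approx 7.6$ (i.e.\ $j=1$, $u_1$ near $-\frac{\pi}{2}$), roughly $2(1.01+1.01\cdot 7.6+1.74\cdot 7.6)/6.6^2\approx 1.006$, so it no longer yields a contraction constant below $1$. Your Cauchy--Schwarz (phase--amplitude) bound $\vert A\cos P_a^{(1)}\cosh P_b^{(1)}+B\sin P_a^{(1)}\sinh P_b^{(1)}\vert\le\sqrt{A^2\cosh^2 P_b^{(1)}+B^2\sinh^2 P_b^{(1)}}$ is exactly what is needed to recover a comfortable constant ($\approx 0.744$), so the ``subtlety'' you flag is real and your refinement in fact repairs a gap in the published estimate rather than merely reproducing it. Two minor observations for consistency with the paper: its displayed inequality $\cosh(P_b^{(1)})<7.6$ is a direction typo for the lower bound that its own final step uses (the majorant is decreasing in $\cosh(P_b^{(1)})$, so one substitutes the minimum), and for convergence of the iteration one needs the two-sided bound $\vert(Z_1^{(1)})'\vert<c$, which you state explicitly and which the estimates do deliver.
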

\begin{proof}[Proof ot the theorem \ref{theorem sluch 2}]
Let $d_{1,j}=\frac{\pi j}{q}$, $q=\frac{n+3}{2}$, $u=u_1+\frac{u_2}{q}$.
We have the equation:\\
\begin{equation}
u=\arctan{f}
\label{Ur1}
\end{equation}
where
$$
f=2\frac{B(\varphi)\sin{(2qc)}+C(\varphi)\sinh{(2qb)}}{\sin(\varphi)(\cos{(2qc)}+\cosh{(2qb)})}
$$
and the following inequality is also true
\begin{equation}
\frac{1}{2}e^{\pi(j-1)}\leq q^2
\label{neravenstvo}
\end{equation}
From \ref{B v 0} of the section \ref{Vspomogatelnie rezultati} it is not difficult to get that
\begin{equation}
\frac{B(\varphi)}{\sin(\varphi)}=1+\frac{3}{16}\varphi^2+O(\varphi^4)
\end{equation}
From the inequality (\ref{neravenstvo}), it follows that $O(\varphi^4)=O(\frac{1}{q^2})$.
$\varphi=d_{1,j}+\frac{u_1}{q}+O(\frac{1}{q^2})$, so
\begin{equation}
\frac{B(\varphi)}{\sin(\varphi)}=1+\frac{3d_{1,j}^2}{16}+\frac{3d_{1,j} u_1}{8}\frac{1}{q}+O\left(\frac{1}{q^2}\right)=B_1^{(1)}+B_2^{(1)}\frac{1}{q}+O\left(\frac{1}{q^2}\right),
\end{equation}
where $$B_1^{(1)}=1+\frac{3d_{1,j}^2}{16}, \;\; B_2^{(1)}=\frac{3d_{1,j} u_1}{8}$$
Similarly
\begin{equation}
\frac{C(\varphi)}{\sin(\varphi)}=\frac{\sqrt{3}d_{1,j}^2}{16}+\frac{\sqrt{3}d_{1,j} u_1}{8}\frac{1}{q}+O\left(\frac{1}{q^2}\right)=C_1^{(1)}+C_2^{(1)}\frac{1}{q}+O\left(\frac{1}{q^2}\right),
\end{equation}
where $$C_1^{(1)}=\frac{\sqrt{3}d_{1,j}^2}{16}, \;\; C_2^{(1)}=\frac{\sqrt{3}d_{1,j} u_1}{8}$$
In \ref{a v 0} of the section \ref{Vspomogatelnie rezultati}, it was shown that
$$c(\varphi)=\frac{1}{2}\varphi-\frac{1}{16}\varphi^3+O(\varphi^5)$$
\begin{equation}
\begin{aligned}
\varphi^3=\left(d_{1,j}+\frac{u_1}{q}+\frac{u_2}{q^2}+O\left(\frac{1}{q^3}\right)\right)^3= d_{1,j}^3+3d_{1,j}^2\frac{u_1}{q}+3d_{1,j}\frac{u_1^2}{q^2}+3d_{1,j}^2\frac{u_2}{q^2}+3d_{1,j}\frac{u_2^2}{q^4}+\\
6d_{1,j}\frac{u_1}{q}\frac{u_2}{q^2}+\frac{u_1^3}{q^3}+3\frac{u_1^2}{q^2}\frac{u_2}{q^2}+3\frac{u_1}{q}\frac{u_2^2}{q^4}+\frac{u_2^3}{q^6}+O\left(\frac{1}{q^3}\right)
\end{aligned}
\label{01}
\end{equation}
From the inequality (\ref{neravenstvo}), it follows that for a sufficiently large $n$, on the right side of the equality (\ref{01}), all the terms after the third one are $O(\frac{1}{q^3})$. Then
\begin{equation}
\begin{aligned}
2qa=d_{1,j}q+u_1+(u_2-\frac{1}{8}(d_{1,j}^3q^2+3d_{1,j}^2u_1q+3d_{1,j}u_1^2))\frac{1}{q}+ O\left(\frac{1}{q^2}\right)\\ = P_a^{(1)}+(u_2+Q_a^{(1)})\frac{1}{q}+ O\left(\frac{1}{q^2}\right),
\end{aligned}
\end{equation}
where $$P_a^{(1)}=d_{1,j}q+u_1, \;\; \;\; Q_a^{(1)}=-\frac{1}{8}(d_{1,j}^3q^2+3d_{1,j}^2u_1q+3d_{1,j}u_1^2).$$
In \ref{b v 0} of the section \ref{Vspomogatelnie rezultati}, it was shown that
$$
b(\varphi)=\frac{\sqrt{3}}{2}\varphi- \frac{\sqrt{3}}{48}\varphi^3+O(\varphi^5)
$$
Then from similar reasoning:
\begin{equation*}
2qb=P_b^{(1)}+(\sqrt{3}u_2+Q_b^{(1)})\frac{1}{q}+ O\left(\frac{1}{q^2}\right),
\end{equation*}
where
 $$P_b^{(1)}=\sqrt{3}(d_{1,j}q+u_1),\;\;\;\; Q_b^{(1)}=-\frac{\sqrt{3}}{24}(d_{1,j}^3q^2+3d_{1,j}^2u_1q+3d_{1,j}u_1^2).$$
The numerator and denominator of the function $f$ are divided by $ \cos{P_b}$ and expend each of the terms in a Taylor series to  $O\left(\frac{1}{q^2}\right)$
$$
\frac{\sin(P_a^{(1)}+(Q_a^{(1)}+u_2)\frac{1}{q}+O(\frac{1}{q^2}))}{\cosh{P_b^{(1)}}}=\frac{\sin{P_a^{(1)}}}{\cosh{P_b^{(1)}}}+\frac{\cos{P_a^{(1)}}}{\cosh{P_b^{(1)}}}\frac{Q_a^{(1)}+u_2}{q}+O\left(\frac{1}{q^2}\right)
$$
$$
\begin{aligned}
&\frac{\sin(P_a^{(1)}+(Q_a^{(1)}+u_2)\frac{1}{q}+O(\frac{1}{q^2}))}{\cosh{P_b^{(1)}}}\frac{B(\varphi)}{\sin(\varphi)}
\\&=\left[\frac{\sin{P_a^{(1)}}}{\cosh{P_b^{(1)}}}+\frac{\cos{P_a^{(1)}}}{\cosh{P_b^{(1)}}}\frac{Q_a^{(1)}+u_2}{q}+O\left(\frac{1}{q^2}\right)\right]\left[B_1^{(1)}+B_2^{(1)}\frac{1}{q}+O\left(\frac{1}{q^2}\right) \right]\\
&=\frac{\sin{P_a^{(1)}}}{\cosh{P_b^{(1)}}}B_1^{(1)}+\left(\frac{\sin{P_a^{(1)}}}{\cosh{P_b^{(1)}}}B_2^{(1)}+B_1^{(1)}\frac{Q_a^{(1)}\cos{P_a^{(1)}}}{\cosh{P_b^{(1)}}} + B_1^{(1)}\frac{u_2\cos{P_a^{(1)}}}{\cosh{P_b^{(1)}}}\right)\frac{1}{q}+O\left(\frac{1}{q^2}\right)
\end{aligned}
$$
\\
$$
\frac{\sinh(P_b^{(1)}+(Q_b^{(1)}+\sqrt{3}u_2)\frac{1}{q}+o(\frac{1}{q}))}{\cosh{P_b^{(1)}}}=\frac{\sinh{P_b^{(1)}}}{\cosh{P_b^{(1)}}}+\frac{Q_b^{(1)}+\sqrt{3}u_2}{q}+O\left(\frac{1}{q^2}\right)
$$
$$
\begin{aligned}
&\frac{\sinh(P_b^{(1)}+(Q_b^{(1)}+\sqrt{3}u_2)\frac{1}{q}+O(\frac{1}{q^2}))}{\cosh{P_b^{(1)}}}\frac{C(\varphi)}{\sin(\varphi)}\\
& = \left[\frac{\sinh{P_b^{(1)}}}{\cosh{P_b^{(1)}}}+\frac{Q_b^{(1)}+\sqrt{3}u_2}{q}+O\left(\frac{1}{q^2}\right)\right]\left[C_1^{(1)}+C_2^{(1)}\frac{1}{q}+O\left(\frac{1}{q^2}\right) \right]\\
&=\frac{\sinh{P_b^{(1)}}}{\cosh{P_b^{(1)}}}C_1^{(1)}+\left(\frac{\sinh{P_b^{(1)}}}{\cosh{P_b^{(1)}}}C_2^{(1)}+C_1^{(1)}Q_b^{(1)} + C_1^{(1)}\sqrt{3}u_2\right)\frac{1}{q}+O\left(\frac{1}{q^2}\right)
\end{aligned}
$$
\\
Then the numerator of the function $f$ will take the form:
$$
X_1^{(1)}+(X_2^{(1)}+X_3^{(1)}u_2)\frac{1}{q}+O\left(\frac{1}{q^2}\right)
$$
where
$$
X_1^{(1)}=2\left(\frac{\sin{P_a^{(1)}}}{\cosh{P_b^{(1)}}}B_1^{(1)}+\frac{\sinh{P_b^{(1)}}}{\cosh{P_b^{(1)}}}C_1^{(1)}\right)
$$
$$
X_2^{(1)}=2\left(\frac{\sin{P_a^{(1)}}}{\cosh{P_b^{(1)}}}B_2^{(1)}+B_1^{(1)}\frac{Q_a^{(1)}\cos{P_a^{(1)}}}{\cosh{P_b^{(1)}}}+\frac{\sinh{P_b^{(1)}}}{\cosh{P_b^{(1)}}}C_2^{(1)}+C_1^{(1)}Q_b^{(1)}\right)
$$
$$
X_3^{(1)}=2\left(B_1^{(1)}\frac{\cos{P_a^{(1)}}}{\cosh{P_b^{(1)}}}+C_1^{(1)}\sqrt{3}\right)
$$
Consider the denominator of the function $f$.
$$
\begin{aligned}
1+\frac{\cos(P_a^{(1)}+(Q_a^{(1)}+u_2)\frac{1}{q}+O(\frac{1}{q^2}))}{\cosh{P_b^{(1)}}}&=1+\frac{\cos{P_a^{(1)}}}{\cosh{P_b^{(1)}}}-\frac{\sin{P_a^{(1)}}}{\cosh{P_b^{(1)}}}\frac{Q_a^{(1)}+u_2}{q}+O\left(\frac{1}{q^2}\right)\\
&=Y_1^{(1)}+(Y_2^{(1)}+Y_3^{(1)}u_2)\frac{1}{q}+O\left(\frac{1}{q^2}\right),
\end{aligned}
$$
where
$$
Y_1^{(1)}=1+\frac{\cos{P_a^{(1)}}}{\cosh{P_b^{(1)}}}
$$
$$
Y_2^{(1)}=-\frac{Q_a^{(1)}\sin{P_a^{(1)}}}{\cosh{P_b^{(1)}}}+\frac{Q_b^{(1)}\sinh{P_b^{(1)}}}{\cosh{P_b^{(1)}}}
$$
$$
Y_3^{(1)}=-\frac{\sin{P_a^{(1)}}}{\cosh{P_b^{(1)}}}+\frac{\sqrt{3}\sinh{P_b^{(1)}}}{\cosh{P_b^{(1)}}}
$$
Then the function $f$ will take the form
$$
f=\frac{X_1^{(1)}+(X_2^{(1)}+X_3^{(1)}u_2)\frac{1}{q}+O(\frac{1}{q^2})}{Y_1^{(1)}+(Y_2^{(1)}+Y_3^{(1)}u_2)\frac{1}{q}+O(\frac{1}{q^2})}=Z_1^{(1)}+(Z_2^{(1)}+Z_3^{(1)}u_2)\frac{1}{q}+O\left(\frac{1}{q^2}\right)
$$
where
$$
Z_1^{(1)}=\frac{X_1^{(1)}}{Y_1^{(1)}}
$$
$$
Z_2^{(1)}=\frac{X_2^{(1)}Y_1^{(1)}-X_1^{(1)}Y_2^{(1)}}{(Y_1^{(1)})^2}
$$
$$
Z_3^{(1)}=\frac{X_3^{(1)}Y_1^{(1)}-X_1^{(1)}Y_3^{(1)}}{(Y_1^{(1)})^2}
$$
As a result, we have the equation:
$$u_1+\frac{u_2}{q}=\arctan{\left(Z_1^{(1)}+(Z_2^{(1)}+Z_3^{(1)}u_2)\frac{1}{q}+O\left(\frac{1}{q^2}\right)\right)}$$
expand $\arctan$ in a Taylor series:
$$u_1+\frac{u_2}{q}=\arctan{Z_1^{(1)}}+\frac{Z_2^{(1)}+Z_3^{(1)}u_2}{1+(Z_1^{(1)})^2}\frac{1}{q}+O\left(\frac{1}{q^2}\right)$$
then $u_1$, is defined from the equation
$$
u_1=\arctan{Z_1^{(1)}}
$$
$$
u_2=\frac{Z_2^{(1)}}{1+(Z_1^{(1)})^2-Z_3^{(1)}}=R^{(1)}(w_1)
$$
Consider the second equation.
Let $d_{2,j}=\frac{\pi j+\frac{\pi}{2}}{q}$, $q=\frac{n+3}{2}$, $w=w_1+\frac{w_2}{q}$.
We have the equation:\\
\begin{equation}
w=\arctan{(-h)}
\label{Ur2}
\end{equation}
where
$$
-h=2\frac{-B(\varphi)\sin{(2qc)}+C(\varphi)\sinh{(2qb)}}{\sin(\varphi)(-\cos{(2qc)}+\cosh{(2qb)})}
$$
Performing similar operations we get the following\\
 $$P_a^{(2)}=d_{2,j}q+w_1, \;\; \;\; Q_a^{(2)}=-\frac{1}{8}(d_{2,j}^3q^2+3d_{2,j}^2w_1q+3d_{2,j}w_1^2).$$
 $$P_b^{(2)}=\sqrt{3}(d_{2,j}q+w_1),\;\;\;\; Q_b^{(1)}=-\frac{\sqrt{3}}{24}(d_{2,j}^3q^2+3d_{2,j}^2w_1q+3d_{2,j}w_1^2).$$
$$B_1^{(2)}=1+\frac{3d_{2,j}^2}{16}$$ $$B_2^{(2)}=\frac{3d_{2,j} w_1}{8}$$\\
$$C_1^{(2)}=1+\frac{\sqrt{3}d_{2,j}^2}{16}$$, $$C_2^{(2)}=\frac{\sqrt{3}d_{2,j} w_1}{8}$$
$$
X_1^{(2)}=2\left(-\frac{\sin{P_a^{(2)}}}{\cosh{P_b^{(2)}}}B_1^{(2)}+\frac{\sinh{P_b^{(2)}}}{\cosh{P_b^{(2)}}}C_1^{(2)}\right)
$$
$$
X_2^{(2)}=2\left(-\frac{\sin{P_a^{(2)}}}{\cosh{P_b^{(2)}}}B_2^{(2)}-B_1^{(2)}\frac{Q_a^{(2)}\cos{P_a^{(2)}}}{\cosh{P_b^{(2)}}}+\frac{\sinh{P_b^{(2)}}}{\cosh{P_b^{(2)}}}C_2^{(2)}+C_1^{(2)}Q_b^{(2)}\right)
$$
$$
X_3^{(2)}=2\left(-B_1^{(2)}\frac{\cos{P_a^{(2)}}}{\cosh{P_b^{(2)}}}+C_1^{(2)}\sqrt{3}\right)
$$
$$
Y_1=1-\frac{\cos{P_a^{(2)}}}{\cosh{P_b^{(2)}}}
$$
$$
Y_2^{(2)}=\frac{Q_a^{(2)}\sin{P_a^{(2)}}}{\cosh{P_b^{(2)}}}+\frac{Q_b^{(2)}\sinh{P_b^{(2)}}}{\cosh{P_b^{(2)}}}
$$
$$
Y_3^{(2)}=\frac{\sin{P_a^{(2)}}}{\cosh{P_b^{(2)}}}+\frac{\sqrt{3}\sinh{P_b^{(2)}}}{\cosh{P_b^{(2)}}}
$$
then
$$
-h=\arctan{\left(\frac{X_1^{(2)}+(X_2^{(2)}+X_3^{(2)}w_2)\frac{1}{q}+o(\frac{1}{q})}{Y_1^{(2)}+(Y_2^{(2)}+Y_3^{(2)}w_2)\frac{1}{q}+o(\frac{1}{q})}\right)}=Z_1^{(2)}+(Z_2^{(2)}+Z_3^{(2)}u_2)\frac{1}{q}+O\left(\frac{1}{q^2}\right)
$$
and we get similar expressions for the second equation:
$$
w_1=\arctan{Z_1^{(2)}},
$$
$$
w_2=\frac{Z_2^{(2)}}{1+(Z_1^{(2)})^2-Z_3^{(2)}}=R^{(2)}(w_1).
$$

\end{proof}
\begin{proof}[Proof of the theorem \ref{posled teorema}]
	$$
	\begin{aligned}
	\lambda_{2j-1}^{(n)}=g(\varphi_{2j-1}^{(n)})=g\left( d_{1,j}+\frac{2u_1^{\star}}{n+3}+\frac{4u_2^{\star}}{(n+3)^2}+O\left(\frac{1}{n^3}\right)\right)\\=
	g(d_{1,j})+g'(d_{1,j})\left(\frac{2u_1^{\star}}{n+3}+\frac{4u_2^{\star}}{(n+3)^2}+O\left(\frac{1}{n^3}\right)\right)
	\\+\frac{1}{2}g''(d_{1,j})\left(\frac{2u_1^{\star}}{n+3}+\frac{4u_2^{\star}}{(n+3)^2}+O\left(\frac{1}{n^3}\right)\right)^2+O\left(\frac{1}{n^3}\right)
	\end{aligned}
	$$
	Expending the brackets and leaving the terms of order no more than $O\left(\frac{1}{n^2}\right)$, we get the statement of the first part of the theorem. The second part is proved similarly.
\end{proof}
\begin{proof}[Proof of theorem \ref{extreme eigenvalues}]
	We know that $\lambda_j^{(n)}=g(\varphi_j^{(n)})$ for all $j$ and $n$.
	\begin{enumerate}[$i)$]
		\item Given that $q=\frac{n+3}{2}$ and $\varphi_{2j-1}^{(n)}=d_{i,j}+\frac{u_1^*}{q}+\frac{u_2^*}{q^2}+O\left(\frac{1}{q^3}\right)$ we get
		\[
		\lambda_{2j-1}^{(n)}=m\sin^6\left(\frac{d_{i,j}}{2}+\frac{u_1^*}{2q}+\frac{u_2^*}{2q^2}+O\left(\frac{1}{q^3}\right)\right)\ \ (q\to\infty).
		\] 
		
		Let $C_j:=1+\frac{u_1^*}{\pi j}$. Since $\sin x=x-\frac{1}{6}x^3+O(x^5)$, $x\to0$, a simple calculation shows that
		\begin{align*}
			\lambda_{2j-1}^{(n)}&=\frac{mC_{j}^6d_{1,j}^6}{2^6}\left[1+\frac{u_2^*}{d_{1,j}C_jq^2}+O\left(\frac{1}{d_{1,j}q^3}\right)\right.\\
			&\left.-\frac{d_{1,j}^2C_j^2}{24}\left(1+\frac{u_2^*}{d_{1,j}C_jq^2}+O\left(\frac{1}{d_{1,j}q^3}\right)\right)^3+O\left(d_{1,j}^4\right)\right]^6\\
			&=-C_j^6d_{1,j}^6\left[1+\frac{u_2^*}{d_{1,j}C_jq^2}+O\left(\frac{1}{d_{1,j}q^3}\right)+O\left(d_{1,j}^4\right)\right]^6\\
			&=-C_j^6d_{1,j}^6\left[1+\frac{6u_2^*}{d_{1,j}C_jq^2}+O\left(\frac{1}{d_{1,j}q^3}\right)+O\left(d_{1,j}^4\right)\right]\\
			&=-C_j^6d_{1,j}^6-\frac{6u_2^*C_j^5d_{1,j}^5}{q^2}+O\left(\frac{d_{1,j}^5}{q^3}\right)+O\left(d_{1,j}^{10}\right)\ \ (q\to\infty).
		\end{align*}
		
		Finally, taking into account that $C_j=1+\frac{u_1^*}{\pi j}$ and $q=\frac{n+3}{2}$, we obtain
		\[
		\lambda_{2j-1}^{(n)}=-\frac{(2\pi j+2u_1^{*})^6}{(n+3)^6}-\frac{24u_2^*(2\pi j+2u_1^*)^5}{(n+3)^7}+\Delta_1(n,j)\ \ (n\to\infty).
		\]
		\item In a similar fashion, given that $\lambda_{2j}^{(n)}=g(\varphi_{2j}^{(n)})$ it is possible to deduce that 
		\[\lambda_{2j}^{(n)}=-\frac{((2j+1)\pi+2w_1^{*})^6}{(n+3)^6}-\frac{24w_2^*((2j+1)\pi+2w_1^*)^5}{(n+3)^7}+\Delta_2(n,j),\ \ (n\to\infty).
		\]
	\end{enumerate}
\end{proof}
Now, we prove the equivalence between the asymptotic formula presented in \cite{Parter1961} with our result in Theorem \ref{extreme eigenvalues}.

In \cite{Parter1961} the author considered the class of functions $g$ satisfying:
\begin{enumerate}[$(a)$]
	\item $g$ is real, continuous, and periodic with period $2\pi$; $\min g=g(0)=m^*$ and $\varphi=0$ is the only value of $\varphi\ (mod\ 2\pi)$ for which this minimum is attained.
	\item If $g$ satisfies (a), then it has continuous derivatives of order $2k$ ($k\in\mathbb{N}$) in some neighborhood of $\varphi=0$ and $g^{(2k)}(0)=\sigma^2>0$ is the first non-vanishing derivative of $g$ at $\varphi=0$.
\end{enumerate}

\begin{theorem}[\cite{Parter1961} Theorem 4]\label{Parter}
	Let $g$ be a function which satisfies Conditions (a) and (b). Let $\lambda_{1,n}$ be the minimal eigenvalue of $T_n(a)$. Then
	\[
	\lambda_{1,n}=m^*+O(n^{-2k}),
	\]
	where $O$ cannot be replaced by $o$.
\end{theorem}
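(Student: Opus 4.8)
The plan is to prove matching upper and lower bounds of order $n^{-2k}$ for $\lambda_{1,n}-m^*$: the upper bound yields the $O(n^{-2k})$ claim, while the lower bound shows that $O$ cannot be weakened to $o$. Write $a(e^{i\theta})=g(\theta)$ and $W(\theta):=g(\theta)-m^*\ge0$. First I would record the consequences of (a) and (b). Since the first nonvanishing derivative at $0$ is the $2k$-th, Taylor's theorem gives $W(\theta)=\frac{\sigma^2}{(2k)!}\theta^{2k}+o(\theta^{2k})$, so there are $\delta,c_1,c_2>0$ with $c_1\theta^{2k}\le W(\theta)\le c_2\theta^{2k}$ for $|\theta|\le\delta$; and since $\varphi=0$ is the unique minimizer, continuity and compactness give $\inf_{\delta\le|\theta|\le\pi}W(\theta)>0$. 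Hence $W(\theta)\ge c_3\min(|\theta|^{2k},\delta^{2k})$ on $[-\pi,\pi]$ for some $c_3>0$. As $g$ is real, $T_n(a)$ is Hermitian, and identifying $x\in\mathbb{C}^n$ with the trigonometric polynomial $P(\theta)=\sum_{j=1}^n x_j e^{ij\theta}$ I would use the standard identities $x^*T_n(a)x=\frac{1}{2\pi}\int_{-\pi}^{\pi}g(\theta)|P(\theta)|^2\,d\theta$ and $x^*x=\frac{1}{2\pi}\int_{-\pi}^{\pi}|P(\theta)|^2\,d\theta$, which give the variational formula
\[
\lambda_{1,n}-m^*=\min_{P\ne0}\frac{\int_{-\pi}^{\pi}W(\theta)|P(\theta)|^2\,d\theta}{\int_{-\pi}^{\pi}|P(\theta)|^2\,d\theta},
\]
the minimum over trigonometric polynomials with frequencies in $\{1,\dots,n\}$. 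In particular $\lambda_{1,n}\ge m^*$.

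For the upper bound I would exhibit one good test polynomial, a Jackson-type kernel. Let $D_m(\theta)=\sum_{j=0}^{m-1}e^{ij\theta}$, so $|D_m(\theta)|^2=(\sin(m\theta/2)/\sin(\theta/2))^2$, and set $P=D_m^{\,k+1}$ with $m=\lfloor n/(k+1)\rfloor$, so that $\deg P\le n-1$ and, after a harmless phase shift, $P$ is admissible. Then $|P(\theta)|^2=(\sin(m\theta/2)/\sin(\theta/2))^{2(k+1)}$ concentrates at scale $1/m$ near $0$. Rescaling $u=m\theta$ one checks that $\int_{-\pi}^{\pi}|P|^2\,d\theta$ has exact order $m^{2k+1}$, while $\int_{-\pi}^{\pi}\theta^{2k}|P|^2\,d\theta=O(m)$, convergence of the rescaled integral $\int u^{2k}(\sin(u/2)/(u/2))^{2(k+1)}\,du$ being guaranteed by $2(k+1)>2k+1$. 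Using $W\le c_2\theta^{2k}$ near $0$ and $W$ bounded elsewhere, the numerator is $O(m)$, so the quotient is $O(m^{-2k})=O(n^{-2k})$, giving $\lambda_{1,n}-m^*\le Cn^{-2k}$.

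For the lower bound, which establishes sharpness, I would show the quotient is bounded below by $cn^{-2k}$ for every admissible $P$. Normalize $\frac{1}{2\pi}\int_{-\pi}^{\pi}|P|^2=1$; after a phase shift $P$ has degree $\le n-1$, so the Nikolskii $L^2$–$L^\infty$ inequality gives $\|P\|_\infty\le C_0\sqrt{n}$, hence $|P(\theta)|^2\le C_0^2 n$ pointwise. Then $\frac{1}{2\pi}\int_{|\theta|\le\rho}|P|^2\le C_0^2 n\rho/\pi$, so choosing $\rho_0=\pi/(2C_0^2 n)$ forces $\frac{1}{2\pi}\int_{|\theta|>\rho_0}|P|^2\ge\frac12$: a degree-$n$ polynomial cannot place more than half its mass in a window shorter than order $1/n$. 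Using $W(\theta)\ge c_3\min(|\theta|^{2k},\delta^{2k})$ and noting that for $|\theta|>\rho_0$ (with $n$ large so $\rho_0<\delta$) this is at least $c_3\rho_0^{2k}$, I obtain $\frac{1}{2\pi}\int W|P|^2\ge \frac{c_3}{2}\rho_0^{2k}=cn^{-2k}$. Taking the minimum over $P$ yields $\lambda_{1,n}-m^*\ge cn^{-2k}$ for all large $n$, so $(\lambda_{1,n}-m^*)n^{2k}\not\to0$ and $O$ cannot be replaced by $o$.

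The variational identity and the positivity reductions are routine; the substantive work splits in two. On the upper side the main calculation is the moment estimate for the Jackson kernel, where the exponent $k+1$ must be large enough for the rescaled integral to converge—precisely what makes the ordinary Fejér kernel (exponent $1$) insufficient, as it yields only $O(n^{-1})$. The principal obstacle, however, is the lower bound: unlike the upper bound it must hold \emph{uniformly} over all trial polynomials, and the clean way to control arbitrary concentration is the Nikolskii inequality, which reduces the matter to an elementary measure-theoretic spread estimate. If one wishes to avoid invoking Nikolskii, the same spread bound can be obtained from a Markov–Bernstein inequality for $P$, but the Nikolskii route is the most direct.
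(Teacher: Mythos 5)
Your proof is correct, but note the context: the paper does not prove this statement at all --- it is quoted verbatim from Parter's 1961 paper (Theorem 4 there) and used only as a consistency check against Theorem \ref{extreme eigenvalues}, so any comparison is with Parter's original argument rather than with anything in this paper. Your route --- the Rayleigh-quotient identity $x^*T_n(a)x=\frac{1}{2\pi}\int_{-\pi}^{\pi}g|P|^2\,d\theta$, the upper bound via the Jackson-type kernel $D_m^{k+1}$ with $m=\lfloor n/(k+1)\rfloor$ (where the exponent $k+1$ is exactly what makes the rescaled moment integral $\int u^{2k}(\sin(u/2)/(u/2))^{2(k+1)}\,du$ converge), and the uniform lower bound via Nikolskii's inequality $\|P\|_\infty\le\sqrt{n}\,\|P\|_2$ plus the mass-spreading estimate --- is sound; I checked the scaling ($\int|P|^2\asymp m^{2k+1}$, $\int\theta^{2k}|P|^2=O(m)$) and the choice $\rho_0=\pi/(2C_0^2n)$, and both halves go through, so you do obtain $c\,n^{-2k}\le\lambda_{1,n}-m^*\le C\,n^{-2k}$, which proves the order statement and the sharpness claim. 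The trade-off relative to Parter's approach: his analysis (in the tradition of Kac--Murdock--Szeg\H{o} and Widom) identifies the precise asymptotic constant, $\lambda_{1,n}-m^*\sim c_k n^{-2k}$ with $c_k$ the lowest eigenvalue of an associated continuous boundary-value problem, which is strictly more than the quoted statement asserts; your argument is more elementary and self-contained, delivers exactly the two-sided order bounds the statement needs, but cannot produce the constant. One small presentational gap: you should state explicitly that the minimum of $g$ being attained only at $\varphi=0$ plus continuity and compactness is what licenses $\inf_{\delta\le|\theta|\le\pi}W>0$ (you do say this, but it is the only place condition (a)'s uniqueness hypothesis enters, and it is worth flagging, since without it the lower bound fails).
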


Let $g_1(\varphi)=-g(\varphi)=(2\sin\frac{\varphi}{2})^6$. Notice that $g_1$ satisfies Conditions $(a)$ and $(b)$ with $m^*=0$ and $g^{(6)}(0)=720>0$. Therefore, from Theorem \ref{Parter} we get
\begin{align}\label{Parter'sAsymptotic}
	\lambda_{1,n}=m^*+O\left(\frac{1}{n^6}\right)\ \ (n\to\infty).
\end{align} 

On the other hand, if $j=1$ in Theorem \ref{extreme eigenvalues}, then we easily obtain the following asymptotic expansion 
\begin{align*}
	\lambda_{1}^{(n)}=m^*+O\left(\frac{1}{n^6}\right)\ \ (n\to\infty),
\end{align*}
which coincides with Eq. \eqref{Parter'sAsymptotic}.
\section{Numerical experiments}
The experiments in this section will be carried out using the Maple mathematical package. The first graph \ref{fig:fig1} shows the dependence of the relative error of the eigenvalue on the iteration number in the formulas of the theorem \ref{theorem 2}. The size of the matrix is $ 200 \times 200$, the error is calculated for the first, average and last eigenvalues (ordered by modulus). Here $k$ is the number of iterations and $m$ is the number of the eigenvalue.
\begin{figure}[!h]
\includegraphics[width=\textwidth]{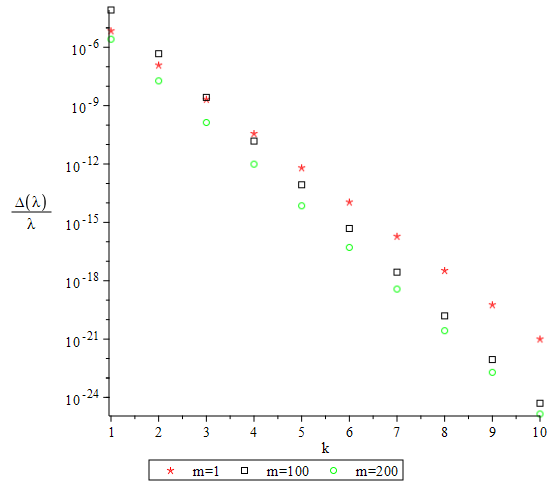}
	\caption{Dependence of the error on the number of iterations}
		\label{fig:fig1}
\end{figure}
The same results are shown in the table \ref{tab1}\\
\begin{table}[!h]
		\center
		\caption{Dependence of the error on the number of iterations}
		\label{tab1}
\begin{tabular}{|c|c|c|c|c|c|}
	\hline
	k&	1&	2&	3&	4&	5\\ \hline
	$\frac{\delta \lambda}{\lambda}$&	$1.33 \cdot 10^{-4}$&		$1.9 \cdot 10^{-5}$&		$2.55 \cdot 10^{-6}$&		$3.31 \cdot 10^{-7}$&		$4.21 \cdot 10^{-8}$ \\ \hline
	m=100 &	$8.28 \cdot 10^{-5}$&		$4.7 \cdot 10^{-7}$&		$2.66 \cdot 10^{-9}$&		$1.51 \cdot 10^{-11}$&		$8.55 \cdot 10^{-14}$	\\ \hline
	m=200 &	$2.61 \cdot 10^{-6}$&		$1.88 \cdot 10^{-8}$&		$1.36 \cdot 10^{-10}$&		$9.28 \cdot 10^{-13}$&		$7.1 \cdot 10^{-15}$\\ \hline
		k&	6&	7&	8&	9&	10\\ \hline
	$\frac{\delta \lambda}{\lambda}$&		$1.11 \cdot 10^{-14}$&		$1.92 \cdot 10^{-16}$&		$3.33 \cdot 10^{-18}$&		$5.78 \cdot 10^{-20}$&    $1 \cdot 10^{-21}$ \\ \hline
	m=100 &		$4.85 \cdot 10^{-16}$&		$2.75 \cdot 10^{-18}$&		$1.56 \cdot 10^{-20}$&		$8.83 \cdot 10^{-23}$&		$5.01 \cdot 10^{-25}$\\ \hline
	m=200 &	    $5.13 \cdot 10^{-17}$&	    $3.71 \cdot 10^{-19}$&		$2.68 \cdot 10^{-21}$&		$1.94 \cdot 10^{-23}$&		$1.4 \cdot 10^{-25}$\\ \hline
\end{tabular}
\end{table}
Table \ref{tab2} shows the maximum relative errors when using the formula from Theorem \ref{Teorema sluch 1}. The maximum was considered for all eigenvalues starting from the seventh. In Table \ref{tab3} and Table \ref{tab3} $n$ is matrix size.\\
Table \ref{tab3} shows the maximum relative deviations when using the formulas from Theorem \ref{theorem sluch 2}. To find $ u_1^\star$ and $ w1^\star$ a recursive formula was used. The number of iterations was taken equal to four. The maximum was found for the first six eigenvalues. 
\begin{table}[h!]
	\center
	\caption{Maximum relative error when using the formula from Theorem \ref{Teorema sluch 1} }
	\label{tab2}
	\begin{tabular}{|l|l|l|l|l|l|}
		\hline
		n&	32&	64&	128&	256&	512\\ \hline
	$\frac{\Delta \lambda}{\lambda}$&	$1.33 \cdot 10^{-4}$&		$1.9 \cdot 10^{-5}$&		$2.55 \cdot 10^{-6}$&		$3.31 \cdot 10^{-7}$&		$4.21 \cdot 10^{-8}$\\ \hline
	\end{tabular}
\end{table}
\begin{table}[h!]
	\center
	\caption{Maximum relative error when using the formula from Theorem \ref{theorem sluch 2} }
	\label{tab3}
	\begin{tabular}{|l|l|l|l|l|l|}
		\hline
		n&	32&	64&	128&	256&	512\\ \hline
		$\frac{\Delta \lambda}{\lambda}$&	$4.17 \cdot 10^{-4}$&		$2.94 \cdot 10^{-5}$&		$1.97 \cdot 10^{-6}$&		$1.28 \cdot 10^{-7}$&		$8.19 \cdot 10^{-9}$\\ \hline
	\end{tabular}
\end{table}

\section{Acknowledgment}
This work is funded by RSCF-21-11-00283\\

\normalsize
\bibliography{bibl}

\end{document}